\documentclass[11pt,a4paper]{amsart}

\usepackage{amsmath,amssymb,amsthm,amsfonts}
\usepackage{enumerate,url,mathrsfs,tikz}
\usepackage{hyperref}

\usepackage[utf8]{inputenc}

\numberwithin{equation}{section}
\numberwithin{figure}{section}
\newtheorem{theo}{Theorem}

\newtheorem{lemma}[theo]{Lemma}
\newtheorem{prop}[theo]{Proposition}
\newtheorem{cor}[theo]{Corollary}
\newtheorem{defi}[theo]{Definition}

\newtheorem{exa}[theo]{Example}

\theoremstyle{definition}
\numberwithin{theo}{section}

\newtheorem{rem}[theo]{Remark}

\textwidth=35.5cc
\oddsidemargin.2mm
\evensidemargin.2mm

\DeclareMathOperator{\aver}{aver}
\DeclareMathOperator{\diag}{diag}

\DeclareMathOperator{\Id}{Id}

  \def\mG{\mathsf{G}} 
  \def\mH{\mathsf{H}} 
    
  \def\mV{\mathsf{V}}
  \def\mE{\mathsf{E}}

  \def\Ffun{\mathscr{F}}
  \def\mP{\mathsf{P}}
  \def\mK{\mathsf{K}}
  \def\mS{\mathsf{S}}
  \def\mC{\mathsf{C}}

 \def\mv{\mathsf{v}}
 \def\me{\mathsf{e}}
 \def\mw{\mathsf{w}}
 
  \def\mW{\mathsf{W}}
  \def\mf{\mathsf{f}}

\newcommand{\R}{\mathbb{R}}
\newcommand{\N}{\mathbb{N}}
\newcommand{\C}{\mathbb{C}}
\newcommand{\Z}{\mathbb{Z}}

\def\:{\thinspace:\thinspace}
\def\linie{\vrule height 14pt depth 5pt}
\begin{document}

\title{Dynamical systems associated with adjacency matrices} 


\keywords{adjacency matrix, line graphs, infinite graphs, evolution equations}

\author[D.~Mugnolo]{Delio Mugnolo}

\address{Delio Mugnolo, Lehrgebiet Analysis, Fakult\"at Mathematik und Informatik, Fern\-Universit\"at in Hagen, D-58084 Hagen, Germany}
\email{delio.mugnolo@fernuni-hagen.de}

\thanks{I would like to thank Joachim von Below (Calais), Sylvain Golénia (Bordeaux), James B.\ Kennedy (Lisbon), and Wenlian Lu (Shanghai) for interesting discussions.}

\begin{abstract}
We develop the theory of linear evolution equations associated with the adjacency matrix of a graph, focusing in particular on infinite graphs of two kinds: uniformly locally finite graphs as well as locally finite line graphs. We discuss in detail qualitative properties of solutions to these problems by quadratic form methods. We distinguish between backward and forward evolution equations: the latter have typical features of diffusive processes, but cannot be well-posed on graphs with unbounded degree. On the contrary, well-posedness of backward equations is a typical feature of line graphs. We suggest how to detect even cycles and/or couples of odd cycles on graphs by studying backward equations for the adjacency matrix on their line graph.
\end{abstract}

\maketitle

\section{Introduction} 

The aim of this paper is to discuss the properties of the linear dynamical system
\begin{equation}\label{eq:mainpde}
\frac{du}{dt}(t)=\mathcal Au(t) 
\end{equation}
where $\mathcal A$ is the adjacency matrix of a graph $\mG$ with vertex set $\mV$ and edge set $\mE$. Of course, in the case of finite graphs $\mathcal A$ is a bounded linear operator on the finite dimensional Hilbert space $\C^{|\mV|}$. Hence the solution is given by the exponential matrix $e^{t\mathcal A}$,
but computing it is in general a hard task, since $\mathcal A$ carries little structure and can be a sparse or dense matrix, depending on the underlying graph. Nevertheless, many properties of this matrix can be deduced directly from $\mathcal A$, without knowing $e^{t\mathcal A}$ explicitly; these include order preservation of initial data and symmetry features.

\medskip
However, extending these observations to the case of general infinite graphs is not straightforward, as the symmetric operator $\mathcal A$ is in general neither self-adjoint nor semibounded: e.g., it is known that the non-zero eigenvalues of the adjacency matrix of the star graph $\mS_n$ are  both $\pm\sqrt{n}$ (cf.~\cite[\S~1.4.2]{BroHae12}. In particular, in the limit $n\to \infty$ one cannot hope for either backward or forward well-posedness of~\eqref{eq:mainpde}; furthermore, while the adjacency matrix of a uniformly locally finite graph is self-adjoint (actually, even bounded) by~\cite[Thm.~3.2]{Moh82}, Müller has shown in~\cite{Mue87} that this is generally not the case for generic locally finite graphs -- in fact, he has produced an example of a locally finite graph whose adjacency matrix is not even essentially self-adjoint!

To overcome the problems that derive from unboundedness of the adjacency matrix of infinite graphs, $\mathcal A$ is often normalized: this leads to considering ${\mathcal D}^{-1}\mathcal A$, where $\mathcal D$ is the diagonal matrix whose entries are the vertex degrees of $\mG$.

In this paper we will follow a different approach and restrict to graphs whose adjacency matrices have better properties, namely \textit{uniformly locally finite graphs} and, more interestingly, \textit{line graphs}. The latter are special in that their adjacency matrices have a nice variational structure: we will exploit these features in order to associate $\mathcal A$ with a closed quadratic form and hence with an operator semigroup that will eventually yield the solution to the backward evolution equation associated with~\eqref{eq:mainpde}.

The properties of time-continuous linear dynamical systems on line graphs have been already studied, although seemingly not too often; we mention the interesting paper~\cite{Bob12}, where it is shown that space-continuous diffusion on a quantum graph constructed upon a graph $\mH$ converges towards the space-discrete diffusion on the line graph $\mG$ of $\mH$, provided the transmission conditions in the vertices are suitably tuned.

\medskip
We are not aware of previous investigations specifically devoted to linear dynamical systems associated with $\mathcal A$ like~\eqref{eq:mainpde} apart from the theory of state transfer on graphs, see e.g.~\cite{God12}, which is focused on the unitary group generated by $i\mathcal A$. However, since the  1950s much attentions has certainly been devoted to
\begin{equation}\label{eq:mainpde-lapl}
\frac{du}{dt}(t)=-\mathcal Lu(t)\ , 
\end{equation}
where $\mathcal L$ is the discrete Laplacian of a graph, beginning at the latest with Kato's pioneering discussion about Markovian extensions in~\cite{Kat54}.

Let us denote by $\mathcal D$ the diagonal matrix whose entries are the degrees of the graph's vertices.
 Because $-\mathcal L:=\mathcal A-\mathcal D$ is the discrete counterpart of a free Hamiltonian, $\mathcal A$ can be regarded as a discrete Schrödinger operator that arises from perturbing $-\mathcal L$ by the potential $\mathcal D$: in the case where the graph is $\mathbb Z$ this means that $\mathcal A$ is a special instance of a Jacobi matrix. Jacobi matrices are important models in mathematical physics, beginning with the seminal work~\cite{Ger82} and including the fundamental contribution~\cite{KilSim03}. 

Section~2 is devoted to the study of~\eqref{eq:mainpde} in the easiest case of finite graphs: we will see that the assumption that it takes place on a line graph greatly enriches the theory of its long-time behavior.
In Section~3, we point out the problematic issues that arise when extending the setting from finite to possibly infinite line graphs. We consider a class of 
weighted line graphs and what we regard as the most natural generalization of adjacency matrices in this context: we extend a classical result by Mohar by characterizing boundedness of the adjacency matrix as a linear operator between $\ell^p$-spaces in terms of boundedness of the vertex degree of a line graph or the underlying pre-line graph.
In Section~4 we discuss the forward evolution equation, i.e.,~\eqref{eq:mainpde} for positive time. While this has diffusive nature, and while the discrete Laplacian $\mathcal L$ that governs the classic discrete diffusion equation is positive semi-definite and essentially self-adjoint regardless of the graph, it follows from a result by Golénia that~\eqref{eq:mainpde} enjoys forward well-posedness if and only if $\mG$ is uniformly locally  finite.
Unlike in the case of the classical diffusion equation on domains, manifolds or generic locally finite graphs, we show in Section~5 that~\eqref{eq:mainpde} always enjoys backward well-posedness. This raises the question of the correct interpretation of the forward evolution equation. In Section~6 we propose some comparisons of the semigroup (or group) generated by $\mathcal A$ with other well-known objects of discrete analysis, but the real nature of the backward evolution equation remains elusive.
To conclude, we propose some generalizations on $\mathcal A$ in Section~7: an operator with drift-like term, a quasi-linear version, and an adjacency-like matrix on generalized line graphs. We are going to recall the classical definition of line graphs and the main features of their adjacency matrix in the Appendix.

\medskip
Throughout this paper we are going to impose the following assumptions:
\begin{itemize}
\item $\mG=(\mV,\mE)$ is a locally finite, simple, non-oriented graph;
\item in order to avoid trivialities, both $\mV$ and $\mE$ are assumed to be nonempty.
\end{itemize}

\section{Finite graphs}

As a warm-up, let us  discuss the elementary case of finite graphs $\mG=(\mV,\mE)$. In this case the adjacency matrix $\mathcal A$ is defined by
\[
\mathcal A_{\mv\mw}:=\begin{cases}
1\qquad &\hbox{if $\mv,\mw\in \mV$ are connected by an edge}\\
0 &\hbox{otherwise}.
\end{cases}
\]
Hence, $\mathcal A$
is a finite symmetric matrix and~\eqref{eq:mainpde} is well-posed both backward and forward, i.e.,
the Cauchy problems 
\begin{equation}\label{eq:mainpde-forw}
\begin{cases}
\frac{du}{dt}(t,\mv)&=\mathcal Au(t,\mv), \qquad t\ge 0,\ \mv\in\mV\ ,\\
u(0,\mv)&=u_0(\mv),\qquad \mv\in \mV\ ,
\end{cases}
\end{equation}
and
\begin{equation}\label{eq:mainpde-backw}
\begin{cases}
\frac{du}{dt}(t,\mv)&=\mathcal Au(t,\mv), \qquad t\le 0,\ \mv\in\mV\ ,\\
u(0,\mv)&=u_0(\mv),\qquad \mv\in \mV\ ,
\end{cases}
\end{equation}
enjoy existence and uniqueness of solutions, as well as their continuous dependence on the initial data $u_0$.

\begin{prop}\label{prop:basic-finite}
Let $\mG$ be a finite graph.
Then $\mathcal A$ generates a norm continuous group -- in fact, an analytic group $(e^{z\mathcal A})_{z\in \C}$ -- acting on $\C^{|\mV|}$. The semigroup $(e^{t\mathcal A})_{t\ge 0}$ is positivity preserving, unlike $(e^{t\mathcal A})_{t\le 0}$. Neither of these semigroups is bounded, i.e., $\lim_{t\to \pm \infty}\|e^{t\mathcal A}\|=\infty$.
\end{prop}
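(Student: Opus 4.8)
The plan is to establish the four claims in sequence, each following from elementary linear algebra and semigroup theory for bounded operators.

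\medskip

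First I would observe that since $\mG$ is finite, $\mathcal A$ is a bounded linear operator on the finite-dimensional space $\C^{|\mV|}$. For any bounded operator the exponential series $e^{z\mathcal A}=\sum_{k=0}^\infty \frac{z^k \mathcal A^k}{k!}$ converges absolutely and uniformly on compact subsets of $\C$, defining an entire operator-valued function of $z$. This immediately yields that $(e^{z\mathcal A})_{z\in\C}$ is a norm-continuous (indeed analytic) group, since $e^{z_1\mathcal A}e^{z_2\mathcal A}=e^{(z_1+z_2)\mathcal A}$ by the commutativity of $\mathcal A$ with itself. This settles the first assertion.

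\medskip

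For the positivity claim I would use that $\mathcal A$ has nonnegative off-diagonal entries and zero diagonal, so for $\lambda>0$ large enough the matrix $\lambda\Id+\mathcal A$ has all nonnegative entries; hence $e^{t(\lambda\Id+\mathcal A)}=e^{t\lambda}e^{t\mathcal A}$ has nonnegative entries for $t\ge 0$ (as a limit of nonnegative partial sums, or via the Trotter-type identity $e^{tM}=\lim_{n}(\Id+\frac{t}{n}M)^n$ for a nonnegative matrix $M$), and dividing by the positive scalar $e^{t\lambda}$ shows $e^{t\mathcal A}$ is entrywise nonnegative for $t\ge 0$. This is exactly the statement that $(e^{t\mathcal A})_{t\ge 0}$ is positivity preserving. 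To see that $(e^{t\mathcal A})_{t\le 0}$ fails to be positive I would exhibit a small-time expansion: for a single edge $\{\mv,\mw\}$ the entry $(e^{t\mathcal A})_{\mv\mw}=t+O(t^2)$ is negative for small $t<0$, so positivity breaks down; since $\mE$ is nonempty such an edge always exists.

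\medskip

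Finally, for unboundedness of both semigroups I would invoke that $\mathcal A$ is a nonzero symmetric real matrix (nonzero because $\mE\neq\emptyset$), hence has a real spectrum that is symmetric about $0$: indeed the trace of $\mathcal A$ is zero, so the eigenvalues sum to zero, and since $\mathcal A\neq 0$ it must possess both a strictly positive eigenvalue $\lambda_{\max}>0$ and a strictly negative eigenvalue $\lambda_{\min}<0$. By the spectral theorem $\|e^{t\mathcal A}\|=e^{t\lambda_{\max}}$ for $t\ge 0$ and $\|e^{t\mathcal A}\|=e^{t\lambda_{\min}}$ for $t\le 0$, both of which diverge as $t\to+\infty$ and $t\to-\infty$ respectively. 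The only mild point requiring care—what I expect to be the main obstacle—is justifying that $\mathcal A$ genuinely has eigenvalues of both signs rather than being nilpotent or zero; this is secured by the vanishing trace together with $\mathcal A\neq 0$, which forces a sign change, and I would state this explicitly to close the argument.
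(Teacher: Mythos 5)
Your proposal is correct and follows essentially the same route as the paper: the exponential series for the analytic group, nonnegativity of the off-diagonal entries for forward positivity, and the zero-trace argument forcing eigenvalues of both signs, hence unboundedness in both time directions. You merely spell out details the paper leaves implicit (the shift $\lambda\Id+\mathcal A$, the small-time expansion showing the backward semigroup has a negative entry, and the explicit exclusion of the nilpotent case via symmetry of $\mathcal A$), all of which are sound.
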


\begin{proof}
Due to finiteness of $\mV$, $\mathcal A$ defines a bounded operator on $\C^{|\mV|}$, hence it generates a norm continuous group on it by
\[
e^{z\mathcal A}=\sum_{k=0}^\infty \frac{z^k}{k!}\mathcal A^k\ ,\qquad z\in \C\ .
\]
The semigroup $(e^{t\mathcal A})_{t\ge 0}$ is positivity preserving because its generator has real entries that are positive off-diagonal entries; this does not apply to $-\mathcal A$. 

Because $\mathcal A$ is a zero-trace matrix, its smallest and largest eigenvalues have different signs,
hence neither semigroup can be bounded.
\end{proof}

\begin{rem}
Observe that since $\mathcal A$ has trace 0, by Jacobi's formula we obtain
\[
\det e^{z\mathcal A}=1\quad \hbox{for all }z\in \mathbb C\ ,
\]
hence $(e^{t\mathcal A})_{z\in \C}$ is a non-compact subgroup of the Lie group $SL(\C^{|\mV|})$.
\end{rem}

Since the semigroups $(e^{\pm t\mathcal A})_{t\ge 0}$ are unbounded, neither of them can converge. However, they display interesting behavior up to appropriate rescaling. This is most easily seen in a simple special case: 
If $\mG$ is regular -- say of degree $k$ -- on $n$ vertices, then $k-\mathcal A$ is the discrete Laplacian $\mathcal L$ of $\mG$, while $k+\mathcal A$ is the signless Laplacian $\mathcal Q$, cf.
the surveys~\cite{CveSim09,deLOlideA11}. Accordingly,
\begin{equation}\label{eq:explicit-regular}
e^{t\mathcal A}=e^{kt}e^{-t\mathcal L}\quad\hbox{and} \quad e^{-t\mathcal A}=e^{kt}e^{-t\mathcal Q},\qquad t\ge 0\ .
\end{equation}
Since $0$ is always an eigenvalue of $\mathcal L$ and the associated eigenspace consists of the constant functions on $\mV$, one deduces that
\begin{equation}\label{eq:lta-regular}
\lim_{t\to\infty}e^{-kt}e^{t\mathcal A}=P:=\frac{1}{|\mV|}J_{|\mV|}\ ,
\end{equation}
where $J_n$ is the all-1-matrix of size $n$.
Likewise,
\begin{equation}\label{eq:lta-regular-2}
\lim_{t\to\infty}e^{-kt}e^{-t\mathcal A}=\tilde{P}
\end{equation}
where $\tilde{P}$ is either the orthogonal projector that maps onto functions that take constant value $\pm1$ over each of the two clusters $\mV_\pm$ of a bipartition of $\mV$, or else $0$ if $\mG$ is not bipartite, i.e.,
\[
\tilde{P}:=
\begin{cases}	
\frac{1}{n}\begin{pmatrix}
\lineskip=0pt
J_{|\mV_+|} &\linie & -J_{|\mV_+|\times |\mV_-|}\\
\noalign{\hrule}
-J_{|\mV_-|\times |\mV_+|}  &\linie & J_{|\mV_-|}
\end{pmatrix}\quad&\hbox{if $\mG$ is bipartite with respect to $\mV=\mV_+\dot{\cup}\mV_-$},\\
0 & \hbox{otherwise}
\end{cases}
\]
cf.~\cite[\S~7.8.1]{CveRowSim10}. More precisely, 
\begin{equation}\label{eq:fiedler}
\left\| e^{-tk}e^{t\mathcal A}- P\right\| \le e^{-t\mu_{\min_*}},\qquad t\ge 0
\end{equation}
and
\begin{equation}\label{eq:fiedler-minus}
\left\| e^{-tk}e^{-t\mathcal A}- \tilde{P}\right\| \le e^{-t\nu_{\min_*}},\qquad t\ge 0
\end{equation}
where $\mu_{\min_*}$, $\nu_{\min_*}>0$ denote the lowest non-zero eigenvalue of the discrete and signless Laplacian on $\mG$, respectively. The lowest non-zero eigenvalue $\mu_{\min_*}$  -- the so-called \textit{algebraic connectivity} of $\mG$ -- has been a very popular topic in algebraic graph theory since~\cite{Fie73}; also $\nu_{\min_*}$ has received some attention in latest years, see e.g.~\cite{CarCveRow08}.

Hence, if $\mG$ is regular one can make good use of the rich information available on $\mu_{\min_*},\nu_{\min_*}$ in the literature to describe the long time behavior of both the backward and forward Cauchy problems associated with~\eqref{eq:mainpde}.
In this paper we are  instead going to focus on long-time asymptotics for adjacency matrices of a different class of graphs, namely \textit{line graphs}, cf.\ Section~\ref{sec:append} for some basic definitions and notations.

\begin{rem}\label{rem:linebip}
Both cycle graphs of even length and paths clearly have bipartite line graphs; it follows from Vizing's Theorem~\cite[Thm.~5.3.2]{Die05} that these are in fact the only connected graphs $\mH$ whose line graphs $\mG$ (of course, again paths or even cycles) are bipartite, which makes the information contained in~\eqref{eq:fiedler-minus} pretty much complementary to the main topic of the present paper.
\end{rem}

More generally, the long-time behavior of the relevant semigroups
\[
e^{-t\mathcal A}\quad\hbox{and}\quad e^{t\mathcal A},\qquad t\ge 0
\]
is determined by the smallest and largest eigenvalue $\lambda_{\min}$ and $\lambda_{\max}$ of $\mathcal A$, respectively:
\[
\|e^{-t\mathcal A}\|\le e^{-\lambda_{\min}t}
\quad\hbox{and}\quad
\|e^{t\mathcal A}\|\le e^{\lambda_{\max}t}\ .
\]

Let us look for a more precise description of these semigroups: we deduce from the Spectral Theorem for Hermitian matrices that
\[
e^{-t\mathcal A}=e^{-\lambda_{\min}t}P_{\min}+R_-(t)\quad\hbox{and}\quad e^{t\mathcal A}=e^{\lambda_{\max} t}P_{\max}+R_+(t)\ ,\qquad t\ge 0 \ .
\]
Here $P_{\min}$ and $P_{\max}$ are the eigenprojectors associated with $\lambda_{\min}$ and $\lambda_{\max}$, respectively; only $P_{\max}$ will in general have rank 1.
Furthermore, the remainder terms $R_\pm$ satisfy the estimate
\[
\|R_-(t)\| \le e	^{-t\lambda_{\min_*}}\quad\hbox{and}\quad \|R_+(t)\| \le e	^{t\lambda_{\max_*}}\ :
\]
here $\lambda_{\min_*}$ (resp., $\lambda_{\max_*}$) denotes the second lowest (resp., second largest) eigenvalue of $\mathcal A$.
Accordingly,
\begin{equation}\label{eq:engnag1}
\left\|e^{\lambda_{\min} t}e^{-t\mathcal A}- P_{\min}\right\|\le e^{t(\lambda_{\min}-\lambda_{\min_*})}\ .
\end{equation}
and
\begin{equation}\label{eq:engnag2}
\left\| e^{-\lambda_{\max}t}e^{t\mathcal A}- P_{\max}\right\| \le e^{-t(\lambda_{\max}-\lambda_{\max_*})},\qquad t\ge 0\ ,
\end{equation}

What do $\lambda_{\min},\lambda_{\max}$ and $P_{\min},P_{\max}$ look like in the case of line graphs? 
It is an immediate consequence of~\eqref{eq:variationmain} that all eigenvalues of $\mathcal A$ on the line graph $\mG$ of some $\mH$ are not smaller than $-2$. Furthermore, it has been proved in~\cite[Thm.~4.1]{Doo73} that $-2$ is an eigenvalue of $\mathcal A$ if and only if $\mH$ is neither a tree, nor a unicyclic graph with cycle of odd length; equivalently, if and only if $\mH$ contains (at least) one cycle of even length or (at least) two cycles of odd length.
For the sake of later reference, let us collect some known facts in the following.

\begin{lemma}\label{lemma:basic-spectral-finite}
Let $\mG=(\mV,\mE)$ be the line graph of a simple, connected, finite graph $\mH=(\mV',\mE')$. Then the following assertions hold.

\begin{enumerate}[(1)]
\item The smallest eigenvalue of $\mathcal A$ lies in the interval
\[
[-2,-2\cos(\frac{\pi}{D}+1)]\ ,
\]
where $D$ is the diameter of $\mH$.
\item The smallest eigenvalue is  $-2$ if and only if 
\begin{equation}\label{eq:-2multipl}
|\mE'|-|\mV'|+\beta >0\ ,
\end{equation}
where
\[
\beta :=\begin{cases}
1\quad &\hbox{if $\mH$ is bipartite}\\
0\quad &\hbox{otherwise;}
\end{cases}
\]
in this case, the multiplicity of $-2$ is precisely $|\mE'|-|\mV'|+\beta $.
\item Some $u\in \R^{|\mV|}$ is an eigenvector of $\mathcal A$ for the eigenvalue $-2$ if and only if $\mathcal Ju=0$, where $\mathcal J$ is the (signless) incidence matrix of $\mH$, cf.\ Section~\ref{sec:append}.
\end{enumerate}
\end{lemma}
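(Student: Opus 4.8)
The common engine behind all three assertions is the factorisation $\mathcal J^\top \mathcal J = \mathcal A + 2\Id$ underlying~\eqref{eq:variationmain}, where $\mathcal J$ denotes the $|\mV'|\times |\mE'|$ signless incidence matrix of $\mH$ and I identify $\R^{|\mV|}$ with $\R^{|\mE'|}$, the vertices of $\mG$ being the edges of $\mH$. I would open with assertion~(3), which is little more than a reformulation of this identity: $\mathcal A u=-2u$ is equivalent to $\mathcal J^\top \mathcal J u=0$, and since $\langle \mathcal J^\top\mathcal J u,u\rangle=\|\mathcal Ju\|^2$ the positive semidefinite matrix $\mathcal J^\top\mathcal J$ has exactly the same kernel as $\mathcal J$. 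Hence $u$ is a $(-2)$-eigenvector of $\mathcal A$ if and only if $\mathcal Ju=0$. The same identity shows $\mathcal A\ge -2\Id$, so that $-2$ is the \emph{smallest} eigenvalue precisely when $\Ker\mathcal J\neq\{0\}$.

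For~(2) I would compute $\dim\Ker\mathcal J$ by the rank--nullity theorem, $\dim\Ker\mathcal J=|\mE'|-\dim\Ima\mathcal J$, and insert the classical value of the rank of the signless incidence matrix of a connected graph over $\R$. The latter is obtained by noting that a vector in the left kernel of $\mathcal J$ must change sign across every edge; propagating this constraint along paths, which is possible because $\mH$ is connected, forces the left kernel to be trivial if $\mH$ carries an odd cycle and to be one-dimensional if $\mH$ is bipartite. Thus $\dim\Ima\mathcal J=|\mV'|-\beta$ and $\dim\Ker\mathcal J=|\mE'|-|\mV'|+\beta$. Combined with~(3), this yields both the criterion~\eqref{eq:-2multipl} for $-2$ to be the bottom of the spectrum and the asserted multiplicity.

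Assertion~(1) needs the lower bound, already delivered by $\mathcal A\ge -2\Id$, together with an upper bound for $\lambda_{\min}$ that I would extract by eigenvalue interlacing. Fix a geodesic $v_0,v_1,\dots,v_D$ realising the diameter $D$ of $\mH$ and consider its $D$ edges $e_1,\dots,e_D$ as vertices of $\mG$. Because the $v_i$ are pairwise distinct, two of these edges share an endpoint exactly when they are consecutive, so $e_1,\dots,e_D$ span an induced path $P_D$ inside $\mG$. Cauchy interlacing applied to the adjacency matrix of this induced subgraph then gives $\lambda_{\min}(\mathcal A)\le \lambda_{\min}\bigl(\mathcal A(P_D)\bigr)=2\cos\tfrac{D\pi}{D+1}=-2\cos\tfrac{\pi}{D+1}$, where I have used the explicit spectrum $2\cos\tfrac{k\pi}{D+1}$, $k=1,\dots,D$, of the path on $D$ vertices.

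The individual ingredients --- the kernel identity, the incidence-matrix rank, and the path spectrum --- are all standard, so the only step that genuinely demands care is the geometric observation in~(1) that the edges of a diameter-realising geodesic induce a \emph{path} in the line graph, i.e.\ that no two non-consecutive such edges are adjacent in $\mG$. I expect this to be the crux, and I would justify it by using the distinctness of the geodesic vertices to rule out shared endpoints among non-consecutive edges; everything else then reduces to the interlacing inequality and the computations above.
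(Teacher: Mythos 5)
Your proof is correct and follows essentially the same route as the paper, whose own proof merely cites Doob's Theorem~4.2 for assertion~(1) and invokes the rank--nullity theorem together with the known rank of the signless incidence matrix for (2)--(3): your interlacing argument along a diametral geodesic is precisely the idea behind Doob's cited bound, and your left-kernel computation of $\mathrm{rank}\,\mathcal J=|\mV'|-\beta$ is the standard fact the paper delegates to the reference. Note also that your endpoint $-2\cos\bigl(\frac{\pi}{D+1}\bigr)$ confirms that the interval endpoint typeset in the lemma as ``$-2\cos(\frac{\pi}{D}+1)$'' is a misprint for $-2\cos\bigl(\frac{\pi}{D+1}\bigr)$.
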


\begin{proof}
The estimate on $\lambda_{\min}$ is taken from~\cite[Thm.~4.2]{Doo73}, whereas the characterization of the multiplicity of $-2$ as an eigenvalue is a consequence of known properties of the incidence matrix and the Rank-Nullity-Theorem stated e.g.\ in~\cite[Thm.~2.2.4]{CveRowSim04}.
\end{proof}

\begin{rem}\label{rem:estim:collatz}
On the other hand, the largest eigenvalue $\lambda_{\max}$ satisfies 
\begin{equation}\label{eq:estim:collatz}
\left.
\begin{array}{r}
\deg_{\aver}(\mG)\\[5pt]
\sqrt{\deg_{\max}(\mG)}\\[5pt]
2\cos\frac{\pi}{|\mV|+1}
\end{array}
\right\}\le \lambda_{\max}< \deg_{\max}(\mG)\ ;
\end{equation}
unless $\mG$ is $k$-regular, in which case $\lambda_{\max}=k$, cf.~\cite[Prop.~3.1.2]{BroHae12},~\cite[Lemma~3.3]{DvoMoh10}, and~\cite[Satz 2]{ColSin57}.
(Here we denote by $\deg_{\aver}$ the \textit{average degree} of $\mG$, i.e., $2\frac{|\mE|}{|\mV|}$, whereas $\deg_{\aver}$ is the \textit{maximal degree}.)

Observe that in view of strict monotonicity of $\lambda_{\max}$ under edge deletion,  $\lambda_{\max}$ can only agree with $\sqrt{\deg_{\max}(\mG)}
$ if $\mG$ is a star, which is impossible whenever $\mG$ is a line graph. In the special case of line graphs, an improvement to the second lower bound has been suggested in~\cite{Mey17-mo}.
\end{rem}

\begin{exa}
The complete graph $\mK_4$ is the line graph of the star graph on $4$ edges, hence~\eqref{eq:-2multipl} is not satisfied.
The diamond graph (i.e., $\mK_4$ minus an edge) is the line graph of the paw graph (i.e., $\mK_3$ plus a pendant edge), for which~\eqref{eq:-2multipl} is not satisfied.
Hence, in both cases $-2$ is not an eigenvalue of $\mathcal A$ and accordingly $e^{2t}e^{-t\mathcal A}\to 0$ as $t\to \infty$.

Let $\mH$ be a graph consisting of two independent cycles (of possibly different length) joined at exactly one vertex. Then $-2$ is an eigenvalue of $\mathcal A(\mG)$, regardless of the length of the two cycles: its multiplicity is $2$ if both cycles have even length and 1 otherwise.
\end{exa}

\begin{cor}
Under the assumptions of Lemma~\ref{lemma:basic-spectral-finite} the matrix $\mathcal A$ has eigenvalue $-2$ if and only if $\mH$ contains at least one cycle of even length (and in this case also $2$ is an eigenvalue) or at least two cycles of odd length.

Furthermore, $-2$ is a \textit{simple} eigenvalue of $\mathcal A$ if and only if $\mH$ either is non-bipartite and contains exactly two independent cycles, or is bipartite and unicyclic.
\end{cor}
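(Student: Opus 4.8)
The plan is to reduce the statement to Lemma~\ref{lemma:basic-spectral-finite}(2), using throughout that for a connected graph $\mH=(\mV',\mE')$ the integer $c:=|\mE'|-|\mV'|+1$ is the first Betti number of $\mH$, i.e.\ the dimension of its cycle space, equivalently the number of independent cycles. Since the identity $\mathcal A=\mathcal J^{\top}\mathcal J-2\,\Id$ underlying~\eqref{eq:variationmain} shows that every eigenvalue of $\mathcal A$ is at least $-2$, the value $-2$ is an eigenvalue exactly when it is the smallest one; hence Lemma~\ref{lemma:basic-spectral-finite}(2) applies and tells us that $-2$ is an eigenvalue if and only if $|\mE'|-|\mV'|+\beta>0$, with multiplicity $|\mE'|-|\mV'|+\beta=c-1+\beta$. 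In terms of $c$ this condition reads $c\ge 1$ if $\mH$ is bipartite ($\beta=1$) and $c\ge 2$ if $\mH$ is non-bipartite ($\beta=0$).

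First I would convert these inequalities into the cycle picture of the statement. If $\mH$ is bipartite every cycle is even, so $c\ge 1$ just means that $\mH$ contains a cycle, equivalently at least one even cycle. If $\mH$ is non-bipartite then $c\ge 2$ produces at least two distinct cycles -- for instance two fundamental cycles attached to distinct non-tree edges of a spanning tree -- and among two or more cycles either some cycle is even or they are all odd, yielding an even cycle or at least two odd cycles. For the converse I would use that $c\le 1$ means $\mH$ is a tree or unicyclic and so carries at most one cycle: two distinct odd cycles therefore force $c\ge 2$, whereas an even cycle forces $c\ge 1$ in the bipartite case and, paired with the odd cycle that non-bipartiteness supplies, two distinct cycles and hence $c\ge 2$ in the non-bipartite case. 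Assembling the cases yields the first equivalence.

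The accompanying assertion that $2$ is then also an eigenvalue is the delicate point, and I expect it to be the main obstacle. Passing through the same incidence matrix, $2$ is an eigenvalue of $\mathcal A=\mathcal J^{\top}\mathcal J-2\,\Id$ precisely when $4$ is a (non-zero) eigenvalue of $\mathcal J^{\top}\mathcal J$, equivalently of $\mathcal J\mathcal J^{\top}=\mathcal Q$, the signless Laplacian of $\mH$. The clean case is the one in which the line graph $\mG$ is itself bipartite: by Remark~\ref{rem:linebip} this is exactly when $\mH$ is an even cycle (or a path), and then the spectrum of $\mathcal A$ is symmetric about $0$, so $-2$ being an eigenvalue instantly yields $2$. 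For a general $\mH$ that merely contains an even cycle this symmetry is lost, and one would have to show directly that such a cycle forces $4$ into the spectrum of $\mathcal Q$; I would hunt for an eigenvector supported on the edges of the even cycle, but I suspect the blanket claim needs an extra hypothesis, since the line graph of a four-cycle with one pendant edge has $-2$ as a (simple) eigenvalue while $2$ is not an eigenvalue at all.

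For the simplicity statement I would only need to set the multiplicity $c-1+\beta$ from Lemma~\ref{lemma:basic-spectral-finite}(2) equal to $1$. In the bipartite case ($\beta=1$) this gives $c=1$, i.e.\ $\mH$ is unicyclic -- its unique cycle being even, consistent with $-2$ being present -- while in the non-bipartite case ($\beta=0$) it gives $c=2$, i.e.\ $\mH$ has exactly two independent cycles. These are precisely the two alternatives in the statement, which completes the plan.
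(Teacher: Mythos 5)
Your reduction of both the eigenvalue criterion and the simplicity criterion to Lemma~\ref{lemma:basic-spectral-finite}(2), read through the cycle rank $c=|\mE'|-|\mV'|+1$ of the connected graph $\mH$, is precisely the argument the paper intends (the corollary is stated there without a separate proof), and your case analysis is complete: bipartiteness makes every cycle even; $c\le 1$ means $\mH$ is a tree or unicyclic and so carries at most one cycle; and an even cycle together with the odd cycle supplied by non-bipartiteness are distinct, forcing $c\ge 2$. The simplicity part, obtained by setting the multiplicity $c-1+\beta$ equal to $1$, likewise comes out exactly as in the statement.

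You are also right -- and this is the most valuable part of your proposal -- to distrust the parenthetical ``(and in this case also $2$ is an eigenvalue)'': it is false as stated, and your counterexample is the paper's own Example~\ref{exa:mainexaasymp}(2). For $\mH$ a $4$-cycle with one pendant edge, a direct computation gives the characteristic polynomial of $\mathcal A(\mG)$ as
\[
\lambda\,(\lambda+2)\,(\lambda^3-2\lambda^2-2\lambda+2)\ ,
\]
and since $2^3-2\cdot 2^2-2\cdot 2+2=-2\neq 0$, the value $-2$ is a (simple) eigenvalue while $2\notin\sigma(\mathcal A(\mG))$, even though $\mH$ contains an even cycle. The structural reason is the one you hint at: the alternating $\pm 1$ vector of Proposition~\ref{prop:accord} extends by zero off the even cycle because the two contributions cancel at every vertex of $\mG$ adjacent to the cycle, whereas the corresponding candidate for $+2$ (constant $1$ on the cycle's edges) does not; and by Remark~\ref{rem:linebip} the spectral symmetry that would transport $-2$ to $+2$ is available only when $\mH$ is a path or an even cycle, so no repair along those lines is possible. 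Your proof therefore establishes everything in the statement that is actually true, and your suspicion that the parenthetical claim needs an extra hypothesis is justified.
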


\begin{cor}\label{cor:fiedl}
Under the assumptions of Lemma~\ref{lemma:basic-spectral-finite} and denoting by $P_{- 2}$ the orthogonal projectors onto the eigenspaces for the eigenvalue $- 2$, the following assertions hold.

\begin{enumerate}
\item If $\mH$ contains at least one cycle of even length or at least two cycles of odd length, then 
\[
\left\| e^{-2 t}e^{-t\mathcal A(\mG)}- P_{-2} \right\| \le e^{-t\epsilon},\qquad t\ge 0
\]
for some $\epsilon>0$. The projector $P_{-2}$ has rank 1 if and only if $\mG$ contains precisely one cycle of even length or precisely two cycles of odd length.

\item If $\mH$ is a tree or a unicyclic graph with odd cycle, then
\[
\lim_{t\to \infty} e^{-2t}e^{-t\mathcal A(\mG)}=0\ .
\]

\item If $\mG$ is not a regular graph, then 
\[
\lim_{t\to \infty} e^{-t\deg_{\max}(\mG)}e^{t\mathcal A(\mG)}=0\ .
\]
\end{enumerate}
\end{cor}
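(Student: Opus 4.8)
The plan is to read off all three statements from the spectral representations of $e^{\mp t\mathcal A}$ that have already been recorded, namely
$e^{-t\mathcal A}=e^{-\lambda_{\min}t}P_{\min}+R_-(t)$ and $e^{t\mathcal A}=e^{\lambda_{\max}t}P_{\max}+R_+(t)$ together with the estimates~\eqref{eq:engnag1} and~\eqref{eq:engnag2}, once the extremal eigenvalues $\lambda_{\min}$ and $\lambda_{\max}$ have been pinned down by Lemma~\ref{lemma:basic-spectral-finite} and the preceding corollary. In other words, no new analysis is needed: the point is simply to feed the correct value of $\lambda_{\min}$ (respectively, the correct strict bound on $\lambda_{\max}$) into estimates already in hand.

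For part~(1), I would first invoke the preceding corollary: since $\mH$ contains at least one even cycle or at least two odd cycles, $-2$ is an eigenvalue of $\mathcal A$. Because all eigenvalues of the adjacency matrix of a line graph are $\ge -2$ (a consequence of~\eqref{eq:variationmain}), this forces $\lambda_{\min}=-2$ and hence $P_{\min}=P_{-2}$. Substituting $\lambda_{\min}=-2$ into~\eqref{eq:engnag1} yields precisely $\left\|e^{-2t}e^{-t\mathcal A}-P_{-2}\right\|\le e^{-t(\lambda_{\min_*}+2)}$, so the claim holds with $\epsilon:=\lambda_{\min_*}+2$; the crucial point to check is that $\epsilon>0$, which follows because $\lambda_{\min_*}$, being the next eigenvalue above $\lambda_{\min}=-2$, satisfies $\lambda_{\min_*}>-2$ strictly. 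The rank-one assertion then reduces to the statement that $\operatorname{rank}P_{-2}$ equals the multiplicity of $-2$, which is $|\mE'|-|\mV'|+\beta$ by Lemma~\ref{lemma:basic-spectral-finite}(2); I would then quote the simplicity criterion from the previous corollary, noting that cycles of $\mH$ and of its line graph $\mG$ are in length-preserving correspondence, so that ``precisely one even cycle or precisely two odd cycles'' is exactly the case of multiplicity one.

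Parts~(2) and~(3) are a direct norm estimate on the symmetric operators involved. For~(2), if $\mH$ is a tree or unicyclic with odd cycle, the preceding corollary shows $-2$ is \emph{not} an eigenvalue, so $\lambda_{\min}>-2$; since $\|e^{-t\mathcal A}\|=e^{-\lambda_{\min}t}$ for $t\ge 0$, one gets $\|e^{-2t}e^{-t\mathcal A}\|=e^{-t(\lambda_{\min}+2)}\to 0$. For~(3), if $\mG$ is not regular then Remark~\ref{rem:estim:collatz} gives the \emph{strict} inequality $\lambda_{\max}<\deg_{\max}(\mG)$, whence $\|e^{-t\deg_{\max}(\mG)}e^{t\mathcal A}\|=e^{-t(\deg_{\max}(\mG)-\lambda_{\max})}\to 0$. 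There is no serious obstacle here; the only steps demanding care are the two strict inequalities $\lambda_{\min_*}>-2$ and $\lambda_{\max}<\deg_{\max}(\mG)$ that guarantee genuinely negative exponents, and the bookkeeping identifying the rank-one condition in~(1) with the cycle description via the multiplicity formula $|\mE'|-|\mV'|+\beta$.
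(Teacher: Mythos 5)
Your proposal is correct and follows essentially the same route as the paper's own proof: identify $\lambda_{\min}=-2$ (resp.\ the strict bound $\lambda_{\max}<\deg_{\max}(\mG)$ for non-regular $\mG$) and feed this into the spectral estimates~\eqref{eq:engnag1} and~\eqref{eq:engnag2}, with parts (2) and (3) reduced to the strict inequalities $\lambda_{\min}>-2$ and $\lambda_{\max}<\deg_{\max}(\mG)$; indeed your citation of Remark~\ref{rem:estim:collatz} for part (3) is more accurate than the paper's own reference to Lemma~\ref{lemma:basic-spectral-finite}.(3), which is evidently a slip.

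One auxiliary claim in your treatment of the rank-one assertion is false as stated: cycles of $\mH$ and of its line graph $\mG$ are \emph{not} in length-preserving correspondence, since every vertex of degree at least $3$ in $\mH$ produces triangles in $\mG$ that do not arise from cycles of $\mH$ (e.g., $\mS_3$ is acyclic while its line graph is $\mC_3$). This does not damage the argument, because the correct reading of the statement is that ``$\mG$'' is a typo for ``$\mH$'': rank-one of $P_{-2}$ is exactly multiplicity one of the eigenvalue $-2$, which by Lemma~\ref{lemma:basic-spectral-finite}.(2) means $|\mE'|-|\mV'|+\beta=1$, i.e., the condition of the preceding corollary on $\mH$ (bipartite and unicyclic, or non-bipartite with exactly two independent cycles) --- no passage through cycles of $\mG$ is needed or available.
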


The long-time behavior described in 2) and 3) for the adjacency matrix of line graphs should be compared with~\eqref{eq:lta-regular} and~\eqref{eq:lta-regular-2}, which holds for the adjacency matrix of regular graphs instead.

\begin{proof}
(1) By Lemma~\ref{lemma:basic-spectral-finite}, $2$ is the lowest eigenvalue of $\mathcal A$ since $|\mE'|-|\mV'|+\beta>0$. Now the assertion follows from~\eqref{eq:engnag1} since $-2$ is the dominant eigenvalue of $-\mathcal A$ by Lemma~\ref{lemma:basic-spectral-finite}.

(2) Under these assumptions, all eigenvalues of $\mathcal A$ are strictly larger than $-2$.

(3) The assertion follows from~\eqref{eq:engnag2}, since by Lemma~\ref{lemma:basic-spectral-finite}.(3) the dominant eigenvalue of $\mathcal A$ is strictly smaller than $\deg_{\max}(\mG)$ unless $\mG$ is regular.
\end{proof} 

Unfortunately, the above asymptotic assertions are of little use as long as the eigenprojectors of $\mathcal A$ -- and in particular its Perron eigenvector, unless $\mG$ is regular -- are unknown; path graphs are among the few classes of non-regular graphs whose Perron eigenvector is known, cf.~\cite[\S~1.4.4]{BroHae12}.

Things look better when it comes to the least eigenvalue $\lambda_{\min}$ of $-\mathcal A$. If $\mH$ contains an even cycle, then we already known that $-2$ is an eigenvalue of $\mathcal A$ and the following can be checked directly.

\begin{prop}\label{prop:accord}
Under the assumptions of Lemma~\ref{lemma:basic-spectral-finite}, let $\mH$ contain an even cycle $\mH_0$ as an induced subgraph and consider its edge bipartition into $\mE'_0=\mE'_+\dot{\cup}\mE'_-$; this  corresponds to the vertex bipartition $\mV_0:=\mV_+\dot{\cup}\mV_-$  of $\mG$. Then
\[
u(\mv):=\begin{cases}
1\quad&\hbox{if }\mv\in \mV_+\\
-1 & \hbox{if }\mv\in \mV_-\\
0 & \hbox{otherwise}
\end{cases}
\]
is an eigenvector of $\mathcal A(\mG)$ for the eigenvalue $-2$.
\end{prop}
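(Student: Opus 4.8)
The plan is to reduce everything to the incidence-matrix criterion already recorded in Lemma~\ref{lemma:basic-spectral-finite}.(3), which asserts that a vector is an eigenvector of $\mathcal A(\mG)$ for the eigenvalue $-2$ precisely when it lies in the kernel of the signless incidence matrix $\mathcal J$ of $\mH$. (This in turn rests on the identity $\mathcal A(\mG)+2\Id=\mathcal J^\top\mathcal J$ from Section~\ref{sec:append}, whose $(\me,\mf)$-entry counts the vertices of $\mH$ shared by the edges $\me,\mf\in\mE'$, equalling $2$ on the diagonal and $1$ exactly when $\me\ne\mf$ are adjacent in $\mG$.) Thus, recalling that the vertices of $\mG$ are the edges of $\mH$, it suffices to verify
\[
(\mathcal J u)(\mv')=\sum_{\me\ni\mv'}u(\me)=0\qquad\text{for every }\mv'\in\mV'.
\]

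First I would dispatch the vertices $\mv'\in\mV'$ not lying on the cycle $\mH_0$. Since every edge of $\mH_0$ has both of its endpoints in $\mH_0$, no such edge is incident to $\mv'$; hence every $\me\ni\mv'$ satisfies $u(\me)=0$, and the sum vanishes trivially.

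The substantive case is that of a vertex $\mv'$ on $\mH_0$. Because $\mH_0$ is a cycle, exactly two of its edges are incident to $\mv'$, while any further edge $\me\ni\mv'$ lies outside $\mE'_0$ and so contributes $u(\me)=0$. It then remains to identify the edge bipartition $\mE'_0=\mE'_+\dot{\cup}\mE'_-$ with the alternating (proper) $2$-colouring of the edges of $\mH_0$, in which consecutive edges receive opposite labels; the two cycle-edges meeting at $\mv'$ are consecutive and therefore lie one in $\mE'_+$ and one in $\mE'_-$, contributing $+1$ and $-1$, which cancel. Hence $(\mathcal J u)(\mv')=0$ here as well, so $\mathcal J u=0$ and $u$ is the desired eigenvector.

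The only point requiring care is this last cancellation, and it is exactly where the even length of $\mH_0$ enters: the alternating $2$-colouring of the edges of a cycle closes up consistently if and only if the cycle is even, which is precisely what forces the two cycle-edges at each vertex to carry opposite signs. I would accordingly present the even-length hypothesis as the structural heart of the construction rather than as a technical convenience, and note that the same computation underlies the link, mentioned before Lemma~\ref{lemma:basic-spectral-finite}, between the appearance of $-2$ in the spectrum and the presence of an even cycle.
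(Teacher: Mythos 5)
Your proposal is correct and matches the paper's intent exactly: the paper states Proposition~\ref{prop:accord} without a written proof (``the following can be checked directly''), having recorded Lemma~\ref{lemma:basic-spectral-finite}.(3) beforehand precisely for this purpose. Your vertex-by-vertex verification that $\mathcal J u=0$ -- with the even length of $\mH_0$ making the alternating edge signs cancel at each cycle vertex, and all other incident edges contributing $0$ -- is exactly that intended direct check via $\mathcal A=\mathcal J^{T}\mathcal J-2\,\Id_{\mV}$.
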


\begin{rem}
1) Proposition~\ref{prop:accord} suggests that the backward evolution equation~\eqref{eq:mainpde} can be used to detect even cycles in line graphs.
For instance, if $\mH$ is a Hamiltonian graph on an even number of vertices, its Hamiltonian cycles may then be detected by considering the long-time behavior of~\eqref{eq:mainpde} on its line graph $\mG$. More generally, a necessary and sufficient condition for a graph with an even number of vertices to be Hamiltonian is that the adjacency matrix of its line graph has eigenvalue $-2$ and that the associated eigenprojector is irreducible.

2) If $\mH$ contains pairs of odd cycles, the eigenvectors corresponding to $\lambda_{\min}=-2$ have been described by algebraic means in~\cite[Thm.~2.1']{Doo73} and~\cite[\S~2.6]{CveRowSim04}, but it does not seem to be straightforward to find a formula for the associated eigenprojectors. 
\end{rem}

\begin{exa}\label{exa:mainexaasymp}
If $-2$ is an eigenvalue of $\mathcal A$, then $(e^{2t}e^{t\mathcal A(\mG)})_{t\le 0}$ converges as $t\to -\infty$ towards an eigenprojector that conveys information on the combinatorics of the pre-line graph $\mH$. Let us consider a few cases displaying this principle.

\begin{enumerate}
\item  Let $\mH$ be the graph depicted in Figure~\ref{fig:firstfig} and $\mG$ its line graph.

\begin{figure}
\begin{minipage}{4.5cm}
\begin{tikzpicture}[scale=0.8]
\draw (4.6,-1) -- (4.6,1) -- (6,0) -- cycle;
\draw (8,0) -- (9.4,-1) -- (9.4,1) -- cycle;
\draw (6,0) -- (8,0);
\draw[fill] (4.6,-1) circle (2pt);
\draw[fill] (4.6,1) circle (2pt);
\draw[fill] (6,0) circle (2pt);
\draw[fill] (8,0) circle (2pt);
\draw[fill] (9.4,-1) circle (2pt);
\draw[fill] (9.4,1) circle (2pt);
\draw[fill] (4.6,0)  node[left=1pt] {$\me_1$};
\draw[fill] (5.6,.5) node[above=1pt] {$\me_2$};
\draw[fill] (5.6,-.5) node[below=1pt] {$\me_3$};
\draw[fill] (7,0) node[above=1pt] {$\me_4$};
\draw[fill] (8.4,.5)  node[above=1pt] {$\me_5$};
\draw[fill] (8.4,-.5) node[below=1pt] {$\me_6$};
\draw[fill] (9.4,0)   node[right=1pt] {$\me_7$};
\end{tikzpicture}
\end{minipage}\qquad
\begin{minipage}{4.5cm}
\begin{tikzpicture}[scale=0.8]
\draw (4.6,-1) -- (4.6,1) -- (3.2,0) -- cycle;
\draw (4.6,-1) -- (4.6,1) -- (6,0) -- cycle;
\draw (6,0) -- (7.4,-1) -- (7.4,1) -- cycle;
\draw (8.8,0) -- (7.4,-1) -- (7.4,1) -- cycle;
\draw[fill] (4.6,-1) circle (2pt) node[below=1pt] {$\mv_3$};
\draw[fill] (4.6,1) circle (2pt) node[above=1pt] {$\mv_2$};
\draw[fill] (6,0) circle (2pt) node[above=1pt] {$\mv_4$};
\draw[fill] (7.4,-1) circle (2pt) node[below=1pt] {$\mv_6$};
\draw[fill] (7.4,1) circle (2pt) node[above=1pt] {$\mv_5$};
\draw[fill] (3.2,0) circle (2pt) node[above=1pt] {$\mv_1$};
\draw[fill] (8.8,0) circle (2pt)  node[above=1pt] {$\mv_7$};
\end{tikzpicture}
\end{minipage}
\caption{The  graph $\mH$ and its line graph $\mG$ in Example~\ref{exa:mainexaasymp}.1); here $\mv_i\simeq \me_i$}\label{fig:firstfig}
\end{figure}

The graph $\mH$ has two odd cycles, hence by Lemma~\ref{lemma:basic-spectral-finite} $-2$ is a simple eigenvalue of the adjacency matrix of $\mG$. The associated eigenvector is
\[
(1,-1,-1,2,-1,-1,1)
\]
and the corresponding eigenprojector is
\[
P_{-2}:=\frac{1}{10}\begin{pmatrix}
1 & -1 & -1 & 2 & -1 & -1 & 1\\
-1 & 1 & 1 & -2 & 1 & 1 & -1\\
-1 & 1 & 1 & -2 & 1 & 1 & -1\\
2 & -2 & -2 & 4 & -2 & -2 & 2\\
-1 & 1 & 1 & -2 & 1 & 1 & -1\\
-1 & 1 & 1 & -2 & 1 & 1 & -1\\
1 & -1 & -1 & 2 & -1 & -1 & 1\\
\end{pmatrix}
\]

\item  Let $\mH$ be a cycle of length 4 with an edge appended to one of its vertices.
\begin{figure}
\begin{minipage}{5cm}
\begin{tikzpicture}[scale=0.8]
\draw (0,0) -- (2,2) -- (4,0) -- (2,-2) -- cycle;
\draw (4,0) -- (6,0);
\draw[fill] (0,0) circle (2pt);
\draw[fill] (2,2) circle (2pt);
\draw[fill] (2,-2) circle (2pt);
\draw[fill] (4,0) circle (2pt);
\draw[fill] (6,0) circle (2pt);
\draw[fill] (.9,-1)  node[below=1pt] {$\me_2$};
\draw[fill] (.9,1) node[above=1pt] {$\me_1$};
\draw[fill] (3.1,-1) node[below=1pt] {$\me_4$};
\draw[fill] (3.1,1) node[above=1pt] {$\me_3$};
\draw[fill] (5,0) node[above=1pt] {$\me_5$};
\end{tikzpicture}
\end{minipage}\quad
\begin{minipage}{4.5cm}
\begin{tikzpicture}[scale=0.8]
\draw (0,-1) -- (2,-1) -- (2,1) -- (0,1) -- cycle;
\draw (2,1) -- (3.5,0) -- (2,-1);
\draw[fill] (0,-1) circle (2pt) node[below=1pt] {$\mv_2$};
\draw[fill] (0,1) circle (2pt) node[above=1pt] {$\mv_1$};
\draw[fill] (2,-1) circle (2pt) node[below=1pt] {$\mv_4$};
\draw[fill] (2,1) circle (2pt) node[above=1pt] {$\mv_3$};
\draw[fill] (3.5,0) circle (2pt) node[above=1pt] {$\mv_5$};
\end{tikzpicture}
\end{minipage}
\caption{The  graph $\mH$ and its line graph $\mG$ in Example~\ref{exa:mainexaasymp}.2); again, $\mv_i\simeq \me_i$}\label{fig:firstfig-2}
\end{figure}
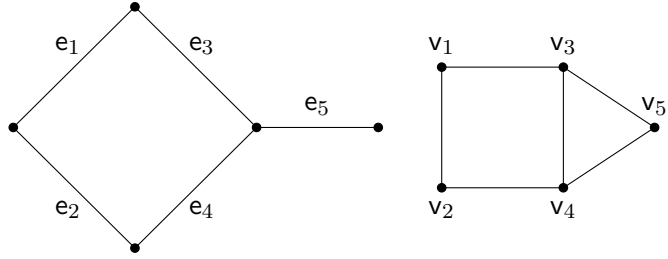

The graph $\mH$ has one even cycle, hence by Lemma~\ref{lemma:basic-spectral-finite} $-2$ is a simple eigenvalue of the adjacency matrix of $\mG$. The associated eigenvector is
\[
(1,-1,-1,1,0)
\]
and the corresponding eigenprojector is
\[
P_{-2}:=\frac{1}{4}
\begin{pmatrix}
1 & -1 & -1 & 1 & 0\\
-1 & 1 & 1 & -1 & 0\\
-1 & 1 & 1 & -1 & 0\\
1 & -1 & -1 & 1 & 0\\
0 & 0 & 0 & 0 & 0\\
\end{pmatrix}\ .
\]

\item If $\mH$ is the figure-8-graph consisting of two cycles of length 4 each, $-2$ is an eigenvalue of multiplicity 2 of the adjacency matrix of $\mathcal A$; the associated eigenspace is spanned by the vectors
\[
(0,0,0,0,1,-1,-1,1)\quad\hbox{and}\quad (1,-1,-1,1,0,0,0,0)
\]
hence, the semigroup $(e^{-t(2+\mathcal A)})_{t\ge 0}$ converges towards the orthogonal projector
\[
P_{-2}:=\frac{1}{4}\begin{pmatrix}
1 & -1 & -1 & 1 & 0 & 0 & 0 & 0\\
-1 & 1 & 1 & -1 & 0 & 0 & 0 & 0\\
-1 & 1 & 1 & -1 & 0 & 0 & 0 & 0\\
1 & -1 & -1 & 1 & 0 & 0 & 0 & 0\\
0 & 0 & 0 & 0 & 1 & -1 & -1 & 1\\
0 & 0 & 0 & 0 & -1 & 1 & 1 & -1\\
0 & 0 & 0 & 0 &-1 & 1 & 1 & -1\\
0 & 0 & 0 & 0 & 1 & -1 & -1 & 1
\end{pmatrix}\ .
\]

Observe that this eigenvector does \textit{not} correspond to the cycle of length 4 in the diamond graph -- since this is not bipartite -- but to the two independent cycles of length 3.

\item If $\mH$ is the figure-8-graph consisting of one cycle of length 4 and one cycle of length 3, $-2$ is an eigenvalue of multiplicity 1 of the adjacency matrix of $\mG$; the associated eigenspaces is spanned by the vector
\[
(1,-1,-1,1,0,0,0,0)
\]
hence $(e^{-t(2+\mathcal A)})_{t\ge 0}$ converges towards the orthogonal projector
\[
P_{-2}:=\frac{1}{4}\begin{pmatrix}
1 & -1 & -1 & 1 & 0 & 0 & 0 & 0\\
-1 & 1 & 1 & -1 & 0 & 0 & 0 & 0\\
-1 & 1 & 1 & -1 & 0 & 0 & 0 & 0\\
1 & -1 & -1 & 1 & 0 & 0 & 0 & 0\\
0 & 0 & 0 & 0 & 0 & 0 & 0 & 0\\
0 & 0 & 0 & 0 & 0 & 0 & 0 & 0\\
0 & 0 & 0 & 0 & 0 & 0 & 0 & 0\\
0 & 0 & 0 & 0 & 0 & 0 & 0 & 0\\
0 & 0 & 0 & 0 & 0 & 0 & 0 & 0\\
	\end{pmatrix}
\]

\item  By Beineke's Theorem, the wheel graph $\mW_n$ on $n+1$ vertices is a line graph if and only $n\in\{3,4\}$; we have already considered the case $\mW_3=\mK_4$. Instead, $\mW_4$ is the line graph of the diamond graph, which has $5$ edges, $4$ vertices and is not bipartite; accordingly, $-2$ is a simple eigenvalue of $\mathcal A(\mW_4)$.
The associated eigenspace is spanned by the vector
\[
(0,1,-1,1,-1)
\]
hence $(e^{-t(2+\mathcal A)})_{t\ge 0}$ converges towards the orthogonal projector
\[
P_{-2}:=\frac{1}{4}
\begin{pmatrix}
0 & 0 & 0 & 0 & 0\\
0 &1&-1&1&-1\\
 0 &-1&1&-1&1\\
 0 &1&-1&1&-1\\
0 &-1&1&-1&1
 \end{pmatrix}
 \]
\end{enumerate}
\end{exa}

\begin{exa}
It seems more difficult to make combinatorial sense of the long-time behavior of the forward equation.
\begin{enumerate}

\item Cycles of odd length are 2-regular and not bipartite, hence we already know from~\eqref{eq:lta-regular} and~\eqref{eq:lta-regular-2} that $(e^{2t}e^{t\mathcal A})_{t\le 0}$ converges towards the orthogonal projector onto the constant functions over $\mG$ as $t\to-\infty$, while $(e^{2t}e^{t\mathcal A})_{t\ge 0}$ converges towards 0 as $t\to\infty$. Choosing a more appropriate rescaling, one finds for $\mG=\mC_3$ and $\mG=\mC_5$ the asymptotic behavior
\[\frac{e^{t}}{3}
\begin{pmatrix}
2 & -1 & -1\\
-1 & 2 & -1\\
-1 & -1 & 2
\end{pmatrix}
\quad \hbox{and}\quad 
\frac{e^{\frac{t}{2}(\sqrt{5}+1)}}{10}
\begin{pmatrix}
4 & -\sqrt{5}-1 & \sqrt{5}-1 & \sqrt{5}-1 & -\sqrt{5}-1\\
-\sqrt{5}-1 & 4 & -\sqrt{5}-1 & \sqrt{5}-1 & \sqrt{5}-1 \\
\sqrt{5}-1 & -\sqrt{5}-1 & 4 & -\sqrt{5}-1 & \sqrt{5}-1 \\
 \sqrt{5}-1 & \sqrt{5}-1 & -\sqrt{5}-1 & 4 & -\sqrt{5}-1 \\
 -\sqrt{5}-1 & \sqrt{5}-1 & \sqrt{5}-1 & -\sqrt{5}-1 & 4 
\end{pmatrix}
\]
as $t\to \infty$, respectively.

\item Let $\mG=\mW_4$. Then $(e^{t\mathcal A(\mW_4)})_{t\ge 0}$ has the asymptotic behavior
 \[
\frac{e^{t(\sqrt{5}+1)}}{10}\begin{pmatrix}
(5-\sqrt{5}) & -\sqrt{5} & -\sqrt{5} & -\sqrt{5} & -\sqrt{5}\\
 -\sqrt{5} &(5-\sqrt{5}) & -\sqrt{5} & -\sqrt{5} & -\sqrt{5}\\
 -\sqrt{5} & -\sqrt{5} & (5-\sqrt{5}) & -\sqrt{5} & -\sqrt{5}\\
 -\sqrt{5} & -\sqrt{5} & -\sqrt{5} & (5-\sqrt{5}) & -\sqrt{5}\\
 -\sqrt{5} & -\sqrt{5} & -\sqrt{5} & -\sqrt{5} & (5-\sqrt{5}) 
 \end{pmatrix} 
\]
for $t\to \infty$.
\end{enumerate}
\end{exa}

\begin{prop}
Under the assumptions of Lemma~\ref{lemma:basic-spectral-finite}, let $p\in [2,\infty]$. Then the following assertions hold for the norms of $(e^{t\mathcal A})_{t\ge 0}$ regarded as a semigroup of (bounded) linear operators acting on $\ell^p(\mV)$.

1) The backward semigroup  satisfies
\[
\|e^{t\mathcal A} \|_{\mathcal L(\ell^p(\mV))}\le e^{-t(\frac{4}{p}+\frac{p-2}{p}\deg_{\max}(\mG))}\quad\hbox{for all }t\le 0\ .
\]
In fact, 
\[
\lim_{t\to-\infty}  e^{t(\frac{4}{p}+\frac{p-2}{p}\deg_{\max}(\mG))}\|e^{t\mathcal A} \|_{\mathcal L(\ell^p(\mV))}= 0\
\]
if $\mH$ is either a tree or a unicyclic graph with cycle of odd length.

2) The forward semigroup  satisfies
\[
\|e^{t\mathcal A} \|_{\mathcal L(\ell^p(\mV))}\le e^{t\deg_{\max}(\mG)}\quad\hbox{for all }t\ge 0\ .
\]
In fact, 
\[
\lim_{t\to\infty} e^{-t\deg_{\max}(\mG)}\|e^{t\mathcal A}\|_{\mathcal L(\ell^p(\mV))}= 0
\]
unless $\mG$ is regular.

\end{prop}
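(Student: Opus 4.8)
The plan is to control the two endpoints $p=2$ and $p=\infty$ and to fill the whole range by Riesz--Thorin interpolation. Writing $\tfrac1p=\tfrac{1-\theta}{2}+\tfrac{\theta}{\infty}$ gives $\theta=\tfrac{p-2}{p}$ and $1-\theta=\tfrac2p$, so that for each fixed $t$
\[
\|e^{t\mathcal A}\|_{\mathcal L(\ell^p(\mV))}\le \|e^{t\mathcal A}\|_{\mathcal L(\ell^2(\mV))}^{2/p}\,\|e^{t\mathcal A}\|_{\mathcal L(\ell^\infty(\mV))}^{(p-2)/p}\ .
\]
The two displayed bounds will then fall out automatically, since the exponents $\tfrac4p+\tfrac{p-2}{p}\deg_{\max}(\mG)$ and $\deg_{\max}(\mG)$ are exactly what this convex combination of the endpoint rates produces.

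For the $\ell^2$ endpoint I would invoke the Spectral Theorem for the symmetric matrix $\mathcal A$: one has $\|e^{t\mathcal A}\|_{\mathcal L(\ell^2(\mV))}=e^{t\lambda_{\max}}$ for $t\ge0$ and $=e^{t\lambda_{\min}}$ for $t\le0$. The line-graph hypothesis enters precisely here. By Lemma~\ref{lemma:basic-spectral-finite} we have $\lambda_{\min}\ge-2$, hence $e^{t\lambda_{\min}}\le e^{-2t}$ for $t\le0$; and by Remark~\ref{rem:estim:collatz} we have $\lambda_{\max}\le\deg_{\max}(\mG)$, hence $e^{t\lambda_{\max}}\le e^{t\deg_{\max}(\mG)}$ for $t\ge0$.

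For the $\ell^\infty$ endpoint --- which I expect to be the only genuine obstacle, since for $t\le0$ the semigroup is no longer positivity preserving and its $\ell^\infty$-norm is not simply a maximal row sum --- I would use the logarithmic norm (matrix measure) $\mu_\infty(B):=\max_{\mv}\big(\Real B_{\mv\mv}+\sum_{\mw\ne\mv}|B_{\mv\mw}|\big)$, which obeys $\|e^{sB}\|_{\mathcal L(\ell^\infty(\mV))}\le e^{s\mu_\infty(B)}$ for $s\ge0$. Because $\mathcal A$ has zero diagonal and off-diagonal entries in $\{0,1\}$, both $\mu_\infty(\mathcal A)$ and $\mu_\infty(-\mathcal A)$ equal $\max_\mv\deg_\mG(\mv)=\deg_{\max}(\mG)$. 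Applying this to $B=\mathcal A$ for $t\ge0$, and to $B=-\mathcal A$ with $s=-t$ for $t\le0$, yields $\|e^{t\mathcal A}\|_{\mathcal L(\ell^\infty(\mV))}\le e^{t\deg_{\max}(\mG)}$ and $\le e^{-t\deg_{\max}(\mG)}$ respectively. Substituting the two endpoint estimates into the interpolation inequality gives both displayed bounds at once.

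For the refined (limit) assertions it suffices to bookkeep the exponent, this time retaining the \emph{exact} $\ell^2$ value rather than its crude bound. If $\mH$ is a tree or a unicyclic graph with odd cycle, then by Lemma~\ref{lemma:basic-spectral-finite} the value $-2$ is not an eigenvalue, so $\lambda_{\min}>-2$; inserting $\|e^{t\mathcal A}\|_{\mathcal L(\ell^2)}=e^{t\lambda_{\min}}$ and the matrix-measure $\ell^\infty$-bound into the interpolation inequality, the chosen rescaling factor cancels the $\deg_{\max}(\mG)$ contributions and leaves the bound $e^{\frac{2t}{p}(2+\lambda_{\min})}$, which tends to $0$ as $t\to-\infty$ for every $p\in[2,\infty)$. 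Symmetrically, if $\mG$ is not regular then $\lambda_{\max}<\deg_{\max}(\mG)$ by Remark~\ref{rem:estim:collatz}, and the identical computation bounds the rescaled forward norm by $e^{\frac{2t}{p}(\lambda_{\max}-\deg_{\max}(\mG))}\to0$ as $t\to\infty$. The endpoint $p=\infty$ is the delicate point, as there the $\ell^2$ weight $2/p$ vanishes and the lossy matrix-measure bound no longer carries the decay; it is instead recovered from the fact that in finite dimension the exponential growth rate of $\|e^{tB}\|_{\mathcal L(\ell^\infty)}$ equals $\max\Real\sigma(B)$, which is $\lambda_{\max}<\deg_{\max}(\mG)$ in the non-regular forward case and $-\lambda_{\min}$ in the backward case, the latter yielding decay exactly when $\deg_{\max}(\mG)>-\lambda_{\min}$.
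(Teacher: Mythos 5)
Your proposal is correct and is essentially the paper's own argument: the paper likewise interpolates via Riesz--Thorin between the $\ell^2$ endpoint (where $\lambda_{\min}\ge -2$, resp.\ $\lambda_{\max}\le\deg_{\max}(\mG)$, gives the spectral bound) and the $\ell^\infty$ endpoint, where it invokes \cite[Lemma~6.1]{Mug07} to get $\ell^\infty$-contractivity of the shifted semigroups $(e^{\pm t\mathcal A-t\deg_{\max}(\mG)})_{t\ge 0}$ -- which is exactly the content of your logarithmic-norm computation $\mu_\infty(\pm\mathcal A)=\deg_{\max}(\mG)$. Where you go beyond the paper is in the limit assertions: the paper's proof merely remarks parenthetically that the relevant $\ell^2$ operator is negative \emph{definite} in the tree/odd-unicyclic case, and, as you correctly observe, feeding the exact $\ell^2$ rate into the interpolation inequality yields the decay only for $p\in[2,\infty)$, since the $\ell^2$ weight $2/p$ vanishes at $p=\infty$. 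Your spectral-growth-rate argument cleanly settles the forward case at $p=\infty$ (here $\mathcal A$ is diagonalizable, so $\|e^{t\mathcal A}\|_{\mathcal L(\ell^\infty)}\le C e^{t\lambda_{\max}}$ with $\lambda_{\max}<\deg_{\max}(\mG)$ unless $\mG$ is regular). In the backward case your hedge -- decay at $p=\infty$ holds exactly when $\deg_{\max}(\mG)>-\lambda_{\min}$ -- is in fact sharp and exposes a boundary case the paper glosses over: for $\mG=\mP_2$, the line graph of the tree $\mH=\mP_3$, one has $\deg_{\max}(\mG)=1=-\lambda_{\min}$ and $\|e^{t\mathcal A}\|_{\mathcal L(\ell^\infty)}=e^{-t}$ for $t\le 0$, so $e^{t\deg_{\max}(\mG)}\|e^{t\mathcal A}\|_{\mathcal L(\ell^\infty)}\equiv 1$ and the stated backward limit actually fails at $p=\infty$ in this degenerate example. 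So your refusal to claim the full statement at that endpoint is not a gap in your proof but a genuine (minor) imprecision in the proposition as stated, which the paper's own proof does not address.
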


Analogous assertions can be formulated for $p\in [1,2]$ by duality.

\begin{proof}
1) The matrix $-(\mathcal A+2)$ is negative semidefinite (definite if $\mH$ is a tree or a unicyclic graph with odd cycle) in $\ell^2(\mV)$ by Lemma~\ref{lemma:basic-spectral-finite} and $(e^{-t(\mathcal A+\deg_{\max}(\mG))})_{t\ge 0}$ is contractive with respect to the $\ell^\infty$-norm by~\cite[Lemma~6.1]{Mug07}. Hence, by the Interpolation Theorem of Riesz--Thorin, one finds
\[
\|e^{-t\mathcal A}\|_{\mathcal L(\ell^p(\mV))}  \le \|e^{-t\mathcal A}\|^\frac{2}{p}_{\mathcal L(\ell^2(\mV))} \|e^{-t\mathcal A}f\|^\frac{p-2}{p}_{\mathcal L(\ell^\infty(\mV))}\le e^{t\frac{4}{p}} e^{t\frac{p-2}{p} \deg_{\max}(\mG)}\ \hbox{for all } t\ge 0\ .
\]

Analogous considerations yield 2): consider the perturbed matrix $\mathcal A-\deg_{\max}(\mG)$. Then, $\mathcal A-\deg_{\max}(\mG)$ is negative semi-definite in $\ell^2(\mV)$ by Remark~\ref{rem:estim:collatz}, hence
\[
\|e^{t\mathcal A}f\|_2 \le e^{t\deg_{\max}(\mG)}\|f\|_2 \quad\hbox{for all }t\ge 0\ .
\]
Furthermore, by~\cite[Lemma~6.1]{Mug07} $(e^{t(\mathcal A-\deg_{\max}(\mG))})_{t\ge 0}$ is contractive with respect to the $\ell^\infty$-norm, i.e.,
\[
\|e^{t\mathcal A}f\|_\infty \le e^{t\deg_{\max}(\mG)}\|f\|_\infty \quad\hbox{for all }t\ge 0\ .
\]
The assertion now follows from the Riesz-Thorin Theorem.
\end{proof}

\begin{prop}
The semigroup generated by $\mathcal A$ satisfies
\[
\sum_{\mv\in \mV}e^{t\mathcal A} f (\mv)\le e^{t\deg_{\max}(\mG)}\sum_{\mv\in \mV} f (\mv)\qquad \hbox{for all $0\le f\in \R^{|\mV|}$ and all $t\ge 0$.}
\]
\end{prop}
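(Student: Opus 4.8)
The plan is to pair the flow with the constant test function $\mathbf 1=(1,\dots,1)$ and to reduce the claim to an elementary differential inequality, using the two structural facts already at our disposal: $\mathcal A$ is symmetric, and $(e^{t\mathcal A})_{t\ge 0}$ is positivity preserving by Proposition~\ref{prop:basic-finite}. Set
\[
g(t):=\sum_{\mv\in\mV}(e^{t\mathcal A}f)(\mv)=\langle \mathbf 1,\, e^{t\mathcal A}f\rangle,\qquad t\ge 0,
\]
so that the asserted inequality reads $g(t)\le e^{t\deg_{\max}(\mG)}g(0)$. Since everything lives in the finite-dimensional space $\R^{|\mV|}$, the function $g$ is smooth and may be differentiated term by term.

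First I would differentiate and shift $\mathcal A$ onto the test function: by symmetry of $\mathcal A$,
\[
g'(t)=\langle \mathbf 1,\, \mathcal A e^{t\mathcal A}f\rangle=\langle \mathcal A\mathbf 1,\, e^{t\mathcal A}f\rangle .
\]
The key observation is that $\mathcal A\mathbf 1$ is precisely the degree vector, i.e.\ $(\mathcal A\mathbf 1)(\mv)=\sum_{\mw\sim\mv}1=\deg_{\mG}(\mv)$, so that
\[
g'(t)=\sum_{\mv\in\mV}\deg_{\mG}(\mv)\,(e^{t\mathcal A}f)(\mv).
\]

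Here is where positivity enters, and it is the crux of the argument. Because $f\ge 0$ and $(e^{t\mathcal A})_{t\ge 0}$ is positivity preserving, every summand $(e^{t\mathcal A}f)(\mv)$ is nonnegative; bounding each degree by $\deg_{\max}(\mG)$ then yields $g'(t)\le \deg_{\max}(\mG)\,g(t)$ for all $t\ge 0$. Since $g(0)=\sum_{\mv\in\mV}f(\mv)\ge 0$, Grönwall's inequality gives $g(t)\le e^{t\deg_{\max}(\mG)}g(0)$, which is exactly the claim.

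The only genuine obstacle is this sign control, and it is precisely what forces the hypothesis $f\ge 0$: without it one cannot pass from $g'(t)=\langle \mathcal A\mathbf 1,\, e^{t\mathcal A}f\rangle$ to a bound by $\deg_{\max}(\mG)\,g(t)$, since the entries of $e^{t\mathcal A}f$ may then carry mixed signs. As a cross-check I would also note an equivalent route via $\ell^\infty$-contractivity of $(e^{t(\mathcal A-\deg_{\max}(\mG))})_{t\ge 0}$ from~\cite[Lemma~6.1]{Mug07}: since $\mathcal A-\deg_{\max}(\mG)$ is symmetric, it equals its own transpose, so the same semigroup is $\ell^1$-contractive by duality, and evaluating $\|e^{t(\mathcal A-\deg_{\max}(\mG))}f\|_1$ for $f\ge 0$ reproduces the stated inequality.
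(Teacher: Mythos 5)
Your proof is correct and is essentially the paper's own argument with the final step written out: the paper likewise combines positivity of $(e^{t\mathcal A})_{t\ge 0}$ from Proposition~2.1 with the observation $(\mathcal A-\deg_{\max}(\mG))\mathbf 1\le 0$, concluding that the rescaled semigroup $(e^{t(\mathcal A-\deg_{\max}(\mG))})_{t\ge 0}$ is substochastic, and your Gr\"onwall computation is precisely the explicit verification of that substochasticity (your duality cross-check via \cite[Lemma~6.1]{Mug07} is a valid alternative but not needed).
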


\begin{proof}
By Proposition~\ref{prop:basic-finite} we already know that $(e^{t\mathcal A})_{t\ge 0}$ and hence $(e^{t(\mathcal A-\deg_{\max})})_{t\ge 0}$ are positive semigroups. Let us also observe that $(\mathcal A-\deg_{\max}){\bf 1}\le 0$, hence $(e^{t(\mathcal A-\deg_{\max})})_{t\ge 0}$ is a substochastic semigroup, whence the claim follows.
\end{proof}

\section{Infinite graphs}\label{section:infinite}

In the following, we are going to study some properties of $\mathcal A$ on countably infinite graphs.

A well-known result by Mohar states that the adjacency matrix
$\mathcal A$ of a locally finite graph $\mG$ is a bounded operator on $\ell^p(\mV)$ for some/all $p\in [1,\infty]$ if and only if $\mG$ is uniformly locally finite (meaning that the vertex degrees of $\mG$ are uniformly bounded), cf.~\cite[Thm.~3.2]{Moh82}. This is in turn equivalent to uniform local finiteness of the pre-line graph $\mH$.

\begin{lemma}\label{rem:degree}
A graph $\mH$ is uniformly locally finite if and only if its line graph $\mG$ is uniformly locally finite.
\end{lemma}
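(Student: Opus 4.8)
The plan is to reduce everything to the elementary degree formula for line graphs. Recall that the vertices of $\mG$ are the edges of $\mH=(\mV',\mE')$, two of them being adjacent in $\mG$ precisely when the corresponding edges of $\mH$ share an endpoint. First I would record the resulting identity: for an edge $\me=\{\mv,\mw\}\in\mE'$, regarded as a vertex of $\mG$, its neighbours in $\mG$ are exactly the edges of $\mH$ incident with $\mv$ or with $\mw$, other than $\me$ itself. Since $\mH$ is simple, no two distinct edges share both endpoints, so these two families are disjoint and
\[
\deg_\mG(\me)=\bigl(\deg_\mH(\mv)-1\bigr)+\bigl(\deg_\mH(\mw)-1\bigr)=\deg_\mH(\mv)+\deg_\mH(\mw)-2.
\]
This single formula drives both implications.

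For the forward implication, suppose $\deg_\mH(\mv)\le K$ for all $\mv\in\mV'$. Then the displayed identity gives at once $\deg_\mG(\me)\le 2K-2$ for every $\me\in\mE'$, so $\mG$ is uniformly locally finite. For the converse, suppose $\deg_\mG(\me)\le M$ for all $\me\in\mE'$. The key observation is that a single edge at a high-degree vertex already witnesses a large line-graph degree: if $\mv\in\mV'$ has $\deg_\mH(\mv)=d\ge 1$ and $\me$ is any edge incident with $\mv$, then the remaining $d-1$ edges at $\mv$ are all neighbours of $\me$ in $\mG$, whence $d-1\le\deg_\mG(\me)\le M$, i.e. $\deg_\mH(\mv)\le M+1$. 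Vertices of degree $0$ are trivially bounded, so $\mH$ is uniformly locally finite.

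There is no serious obstacle here; the only point requiring care is the converse direction, where one must resist the temptation to try to control both endpoints of an edge simultaneously. The right move is to notice that it suffices to exhibit one incident edge of any prescribed vertex in order to transfer that vertex's degree into $\mG$, so that a uniform bound on $\mG$ forces a uniform bound on $\mH$. One should also dispose of the degenerate cases (isolated vertices, and — were they allowed — multiple edges sharing both endpoints), but under the paper's standing hypothesis that $\mH$ is simple these are immediate.
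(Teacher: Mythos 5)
Your proof is correct and follows essentially the same route as the paper's: both rest on the degree identity $\deg_\mG(\me)=\deg_\mH(\mv)+\deg_\mH(\mw)-2$ and, for the converse, on transferring a vertex's degree to any one incident edge via $\deg_\mG(\me)\ge \deg_\mH(\mv)-1$. The only cosmetic difference is that the paper phrases the converse contrapositively, extracting a sequence of vertices of unbounded degree and corresponding incident edges, whereas you argue directly from a uniform bound on $\mG$ — the underlying estimate is identical.
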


\begin{proof}
Given a vertex $\mv\simeq \me'\simeq (\mv',\mw')\in \mV$, its degree $\deg_{\mG}(\mv)$ in $\mG$ is given by
\begin{equation}\label{eq:eqdegreee}
\deg_{\mG}(\mv)=\deg_{\mH}(\mv')+\deg_{\mH}(\mw')-2 \ ,
\end{equation}
hence in particular $\deg_{\max}(\mG)\le 2\deg_{\max}(\mH)-2$ and accordingly $\mG$ is uniformly locally finite if so is $\mH$. Conversely, if $\mH$ is not uniformly locally finite, we can find a sequence $(\mv'_n)_{n\in \mathbb N}\subset \mV'$ such that $\deg_\mH(\mv'_n)\ge n$. For each $n$ we can thus pick some $\me'_n$ incident in $\mv'_n$ whose degree is then by~\eqref{eq:eqdegreee} at least $n-1$: upon possibly discarding edges considered twice, this procedure yields a sequence $(\me'_n)_{n\in \N}\simeq (\mv_n)_{n\in \mathbb N} \subset \mV$ with $\deg_\mG(\mv_n)\to \infty$. 
\end{proof}

For later convenience, let us state Mohar's result in a slightly more general form.

\begin{lemma}\label{lem:mohar}
Let $\mG$ be a  graph whose edges are weighted by $0<c\in \ell^\infty(\mE)$.
Then the adjacency matrix formally defined by
\begin{equation}\label{def:acnonline}
(\mathcal A_C)_{\mv\mw}=\begin{cases}
0\quad &\hbox{if either $\mv=\mw$ or $\mv,\mw$ are not adjacent}\\
c(\me)&\hbox{if $\mv,\mw$ are the endpoints of $\me$}
\end{cases}
\end{equation}
is bounded on $\ell^p(\mV)$ for some/all $p\in [1,\infty]$ if and only if $\deg_\mG^C\in \ell^\infty(\mV)$, where
\begin{equation}\label{eq:def-degc}
\deg_\mG^C(\mv):=\sum_{\me\sim \mv}c(\me)
\end{equation}
and in this case its operator norm satisfies
\begin{equation}\label{eq:gerschgorin-reloaded}
\left(\sup\limits_{\mv\in \mV}\sum_{\me\sim\mv}c(\me)\right)^\frac{1}{p}\le \|\mathcal A_C\|_{\mathcal L(\ell^p(\mV))}\le \sup\limits_{\mv\in \mV}\sum_{\me\sim\mv}c(\me)\ \quad \hbox{for all }p\in [1,\infty]\ ,
\end{equation}
with equality in the upper bound holding for $p=1$ and $p=\infty$. The lower bound follows as in~\cite[Thm.~3.1]{Bel09}.
\end{lemma}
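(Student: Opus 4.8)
The plan is to handle the two endpoints $p=1$ and $p=\infty$ first, where the operator norm can be estimated by hand, and then to reach the intermediate exponents by interpolation and duality, isolating the lower bound as the one genuinely delicate point. Throughout write $M:=\sup_{\mv\in\mV}\deg^C_\mG(\mv)$ and recall that $(\mathcal{A}_C u)(\mv)=\sum_{\mw\sim\mv}c(\me_{\mv\mw})u(\mw)$, where $\me_{\mv\mw}$ denotes the edge joining $\mv$ and $\mw$. The estimate $|(\mathcal{A}_Cu)(\mv)|\le\deg^C_\mG(\mv)\,\|u\|_\infty\le M\|u\|_\infty$ immediately gives $\|\mathcal{A}_C\|_{\mathcal L(\ell^\infty(\mV))}\le M$, and interchanging the order of summation (Tonelli, all summands being nonnegative) gives $\|\mathcal{A}_Cu\|_1\le\sum_{\mw}|u(\mw)|\,\deg^C_\mG(\mw)\le M\|u\|_1$, whence $\|\mathcal{A}_C\|_{\mathcal L(\ell^1(\mV))}\le M$. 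In particular $\mathcal{A}_C$ is bounded on $\ell^1$ and on $\ell^\infty$ as soon as $M<\infty$, and the Riesz--Thorin interpolation theorem promotes this to $\|\mathcal{A}_C\|_{\mathcal L(\ell^p(\mV))}\le M^{1-\theta}M^{\theta}=M$ for every $p\in[1,\infty]$, with $1/p=1-\theta$. This settles the upper estimate in \eqref{eq:gerschgorin-reloaded} for all $p$ together with the ``if'' direction of the equivalence.

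Next I would verify that the upper bound is attained at the two endpoints, which yields the asserted equalities. Since $\mathbf 1\in\ell^\infty(\mV)$ with $\|\mathbf 1\|_\infty=1$ and $(\mathcal{A}_C\mathbf 1)(\mv)=\deg^C_\mG(\mv)$, we get $\|\mathcal{A}_C\|_{\mathcal L(\ell^\infty(\mV))}\ge\sup_\mv\deg^C_\mG(\mv)=M$, forcing equality for $p=\infty$. For $p=1$ one may either test against $\chi_{\{\mv_0\}}$, for which $\|\mathcal{A}_C\chi_{\{\mv_0\}}\|_1=\deg^C_\mG(\mv_0)$ and hence $\|\mathcal{A}_C\|_{\mathcal L(\ell^1(\mV))}\ge M$, or simply invoke symmetry: since $\mathcal{A}_C^{\top}=\mathcal{A}_C$, the operator on $\ell^1$ is the Banach-space adjoint of the operator on $\ell^\infty$, so the two norms coincide and both equal $M$.

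Finally comes the lower bound $M^{1/p}\le\|\mathcal{A}_C\|_{\mathcal L(\ell^p(\mV))}$ for intermediate $p$, following \cite[Thm.~3.1]{Bel09}, again by a single-vertex test function: for $u=\chi_{\{\mv_0\}}$ one computes $\|\mathcal{A}_C\chi_{\{\mv_0\}}\|_p=\bigl(\sum_{\me\sim\mv_0}c(\me)^p\bigr)^{1/p}$, and taking the supremum over $\mv_0$ bounds the operator norm from below; the same computation also yields the converse implication, since unboundedness of this quantity over $\mv_0$ precludes boundedness of $\mathcal{A}_C$, so that boundedness forces $\deg^C_\mG\in\ell^\infty(\mV)$. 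I expect this to be the main obstacle, and the step where the argument of \cite{Bel09} must be applied with care: the single-vertex test delivers $\bigl(\sum_{\me\sim\mv_0}c(\me)^p\bigr)^{1/p}$ rather than $\bigl(\sum_{\me\sim\mv_0}c(\me)\bigr)^{1/p}=\deg^C_\mG(\mv_0)^{1/p}$, and these coincide (or the former dominates) precisely in the unweighted situation $c\equiv1$ underlying Mohar's theorem, whereas for edge weights below $1$ the two quantities can separate and the test vertex must be combined with a portion of its neighborhood to recover the claimed exponent. For this reason I would anchor the equivalence at the endpoints $p\in\{1,\infty\}$, where the matching lower bounds above are exact and the characterization $\deg^C_\mG\in\ell^\infty(\mV)$ is immediate, propagate the ``if'' direction to all $p\in[1,\infty]$ by interpolation, and reserve the extra work of \cite{Bel09} for the precise two-sided estimate \eqref{eq:gerschgorin-reloaded} at intermediate exponents, where the discrepancy between $\sum_{\me\sim\mv}c(\me)^p$ and $\sum_{\me\sim\mv}c(\me)$ has to be absorbed.
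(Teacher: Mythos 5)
Your handling of the equivalence and the upper bound is correct and is essentially the paper's own route: the paper anchors the norm identities at the endpoints $p=1,\infty$ (citing \cite[Thm.~2.2.5 and 2.2.8]{Dav07} for the exact row-/column-sum formulas you rederive by hand), interpolates via Riesz--Thorin for intermediate $p$, and -- just like you -- does \emph{not} prove the lower bound, delegating it to \cite[Thm.~3.1]{Bel09} inside the statement itself. Your endpoint tests ($\mathbf 1$ for $\ell^\infty$, $\chi_{\{\mv_0\}}$ for $\ell^1$, symmetry for duality) are all sound.

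The genuine gap is exactly the one you flagged, and your suspicion is not just justified but fatal at the stated level of generality: no combination of the test vertex with part of its neighborhood can close it, because under the sole hypothesis $0<c\in\ell^\infty(\mE)$ the intermediate-$p$ assertions are \emph{false}. Concretely, take the disjoint union over $n\in\N$ of stars $\mS_{n^2}$ (on $n^2$ edges), each edge weighted $c\equiv 1/n$ on the $n$-th star. Then $\mathcal A_C$ is block diagonal with blocks of $\ell^2$-norm $n^{-1}\sqrt{n^2}=1$, so $\|\mathcal A_C\|_{\mathcal L(\ell^2(\mV))}=1$, while $\deg_{\mG}^C$ equals $n$ at the $n$-th center: thus $\deg_{\mG}^C\notin\ell^\infty(\mV)$, the operator is unbounded on $\ell^1(\mV)$, and both the ``some/all'' equivalence and the bound $M^{1/2}\le\|\mathcal A_C\|_{\mathcal L(\ell^2(\mV))}$ break down. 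This also shows that your sentence ``the same computation yields the converse implication'' fails as written for $1<p<\infty$: unboundedness of $\deg_{\mG}^C$ does not force unboundedness of $\bigl(\sum_{\me\sim\mv}c(\me)^p\bigr)^{1/p}$, which in the example stays equal to $1$ at every center for $p=2$. What rescues the lemma is the standing assumption~\eqref{eq:standing} that the paper imposes immediately afterwards, $c\ge c_0>0$: then $c(\me)^p\ge c_0^{p-1}c(\me)$ upgrades your single-vertex test to $\|\mathcal A_C\chi_{\{\mv\}}\|_p\ge c_0^{1-1/p}\deg_{\mG}^C(\mv)^{1/p}$, giving the only-if direction for every $p$ and the lower bound with constant $c_0^{1-1/p}$, which matches the displayed $M^{1/p}$ exactly in the unweighted case $c\equiv 1$ of von Below's theorem. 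So your instinct to anchor the characterization at $p\in\{1,\infty\}$ and interpolate the ``if'' direction is the right repair; for the intermediate-$p$ lower bound and converse you should either import the hypothesis $c\ge c_0$ or state them for $c\equiv 1$ only.
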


We stress that Lemma~\ref{lem:mohar} holds whether $\mG$ is a line graph or not. Furthermore, since $\mG$ is by assumption simple and hence there is at most one edge between any two vertices, the weight function $c$ can be regarded as a mapping from $\mV\times \mV$ to $(0,\infty)$, i.e., $\me\equiv (\mv,\mw)$; however in view of our standing assumption that $\mG$ is non-oriented, the weight functions is necessarily symmetric, i.e., $c(\mv,\mw)=c(\mw,\mv)$.

\begin{proof}
Since $\mathcal A_C$ is an infinite matrix with nonnegative entries, its boundedness as an operator on $\ell^p(\mV)$ is equivalent to the property of being defined everywhere, which is in turn easily seen to be equivalent to the property of its rows being uniformly bounded, i.e., to $(\deg_{\mG}^C(\mv))_{\mv\in \mV}\in \ell^\infty(\mV)$. Formula~\eqref{eq:gerschgorin-reloaded} follows from~\cite[Thm.~2.2.5 and Thm.~2.2.8]{Dav07} for $p=1$ and $p=\infty$, respectively;
the case of general $p$ can be deduced by the Riesz--Thorin Theorem.
\end{proof}

Whenever $\mG$ is a line graph, cf.\ Section~\ref{sec:append}, it is more convenient and perhaps more natural to restrict to a special class of edge weights $c$ that depend on the structure of the underlying pre-line graph $\mH=(\mV',\mE')$. We thus impose in the remainder of this paper that
\begin{equation}\label{eq:standing}
c:\mV'\to \R \hbox{ s.t. }C_0 \ge c(\mv')\ge c_0 \hbox{ for some }C_0>c_0>0\hbox{ and all }\mv'\in \mV'.
\end{equation}
Let us also recall that $\mathcal J$ is the (signless) incidence matrix of $\mG$, cf.~\eqref{eq:def-signlessinc}.

\begin{prop}\label{prop:mohar-reloaded}
The following assertions hold for the matrix $\mathcal A_C=(a_{C_{\mv\mw}})_{\mv,\mw\in \mV}$ associated with the quadratic form
\[
a_C(u):=\sum_{\mv'\in\mV'} c(\mv')|\mathcal J u(\mv')|^2-\sum_{\mv\in\mV}\gamma(\mv)|u(\mv)|^2
\]
with (form) domain
\[
D^N(a):=\{u\in \ell^2(\mV):\mathcal Ju\in \ell^2(\mV')\}\ ,
\]
where
\[
\gamma(\mv):=c(\mv')+c(\mw')\quad \hbox{if }\mv=\mw\simeq \me'=(\mv',\mw')\ .
\]

1) The matrix $\mathcal A_C$ is a self-adjoint operator on $\ell^2(\mV)$ that is bounded from below.

2) Furthermore, $\mathcal A_C$ is bounded from above (and hence is bounded) on $\ell^p(\mV)$ for some/all $p\in [1,\infty]$ if and only if $\deg_{\mG}^C(\mv)\in \ell^\infty(\mV)$, where
\[\deg_{\mG}^C(\mv):= c(\mv')(\deg_\mH (\mv')-1)+c(\mw')(\deg_\mH (\mw')-1) \quad\hbox{whenever }\mv\simeq \me'\equiv (\mv',\mw')\in \mE'\ ;
\]
in this case \begin{equation}\label{eq:gerschgorin-reloaded-2}
\|\mathcal A_C\|\le\|\deg_{\mG}^C\|_\infty\ .
\end{equation}

3) If $\deg_\mG^C\in \ell^\infty(\mG)$, then the spectrum $\sigma(\mathcal A_C)$ of $\mathcal A$ as an operator on $\ell^2(\mV)$ is contained in the interval
\[
[-\|\gamma\|_\infty ,\|\deg_{\mG}^C\|_\infty \ ]\ .
\]

4) The spectral radius
\[
s_p(\mathcal A_C):=\sup\{\lambda:\lambda\in \sigma(\mathcal A_C)\}
\]
of $\mathcal A_C$ as a bounded linear operator on $\ell^p(\mV)$
satisfies
\[
\left(\|\deg_{\mG}^C\|_\infty\right)^\frac{1}{p}\le s_p(\mathcal A_C)\le \|\deg_{\mG}^C\|_\infty\ .
\]
\end{prop}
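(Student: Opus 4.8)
The plan is to read the upper bound off the estimate already established and to obtain the lower bound by a localisation/domination argument, exploiting that $\mathcal A_C$ is \emph{positivity preserving}: its off-diagonal entries $c(\cdot)$ are strictly positive and its diagonal vanishes. For such an operator the spectral bound coincides with the spectral radius, so $s_p(\mathcal A_C)$ is genuinely the $\ell^p$-spectral radius $r_p(\mathcal A_C)=\lim_{n\to\infty}\|\mathcal A_C^n\|_{\mathcal L(\ell^p(\mV))}^{1/n}$. The upper estimate is then immediate, since the spectral radius never exceeds the operator norm: $s_p(\mathcal A_C)\le\|\mathcal A_C\|_{\mathcal L(\ell^p(\mV))}\le\|\deg_{\mG}^C\|_\infty$ by~\eqref{eq:gerschgorin-reloaded-2}.

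For the lower bound I would first fix $\varepsilon>0$ and pick a vertex $\mv_0\simeq\me'\equiv(\mv',\mw')$ with $\deg_{\mG}^C(\mv_0)>\|\deg_{\mG}^C\|_\infty-\varepsilon$. Keeping only those entries of $\mathcal A_C$ that join $\mv_0$ to its neighbours produces a finite-rank operator $S$ with $0\le S\le\mathcal A_C$ entrywise; as both are positivity preserving, $0\le S^n\le\mathcal A_C^n$ entrywise for every $n$, whence $\|S^n\|_{\mathcal L(\ell^p)}\le\|\mathcal A_C^n\|_{\mathcal L(\ell^p)}$ and, by Gelfand's formula, $r_p(S)\le r_p(\mathcal A_C)$ on every $\ell^p(\mV)$. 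Now $S$ is the adjacency matrix of a weighted star whose edge weights $a_1,\dots,a_k\in[c_0,C_0]$ satisfy $\sum_i a_i=\deg_{\mG}^C(\mv_0)$; being finite, its spectrum is independent of $p$, and its only nonzero eigenvalues are $\pm\bigl(\sum_i a_i^2\bigr)^{1/2}$. Hence $r_p(\mathcal A_C)\ge\bigl(\sum_i a_i^2\bigr)^{1/2}\ge\bigl(\|\deg_{\mG}^C\|_\infty-\varepsilon\bigr)^{1/2}$ (using $a_i\ge 1$ in the unweighted case, or the weight bounds~\eqref{eq:standing} up to the constant $c_0$), and letting $\varepsilon\downarrow0$ already secures $s_p(\mathcal A_C)\ge(\|\deg_{\mG}^C\|_\infty)^{1/p}$ for all $p\in[2,\infty]$, because there $1/p\le 1/2$.

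The remaining task is to upgrade this to the full scale of exponents, and this is where I expect the only genuine difficulty to lie. For $p\in[1,2)$ the realisation of $\mathcal A_C$ on $\ell^p(\mV)$ is no longer self-adjoint, so its spectral radius is in general \emph{strictly} smaller than its operator norm; in particular one cannot simply quote the norm lower bound~\eqref{eq:gerschgorin-reloaded} of Lemma~\ref{lem:mohar} (the test-vector argument of~\cite[Thm.~3.1]{Bel09}), nor does the symmetry $s_p(\mathcal A_C)=s_{p'}(\mathcal A_C)$ help, since for $p<2$ the conjugate exponent yields the weaker exponent $1/p'\le1/p$. Instead I would apply the von Below duality estimate not to $\mathcal A_C$ but to its powers $\mathcal A_C^n$, pairing $\delta_{\mv_0}$ against a normalised indicator of the large neighbourhood of $\mv_0$ and bounding the corresponding weighted walk counts from below; taking $n$-th roots and passing to the limit should return the growth rate $(\|\deg_{\mG}^C\|_\infty)^{1/p}$. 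The technical heart is to make this walk-count estimate uniform in $n$, i.e.\ to verify that localising near $\mv_0$ does not erode the exponential rate as $n\to\infty$.
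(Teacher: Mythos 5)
Your proposal addresses only assertion 4) and takes \eqref{eq:gerschgorin-reloaded-2} as ``already established'', but in the paper's proof assertions 1)--3) are where almost all the work lies, and they cannot be skipped: since $\mG$ need not be uniformly locally finite, $\mathcal A_C$ is \emph{defined} variationally through the form $a_C$, so before Lemma~\ref{lem:mohar} (and hence the norm bound you read off) can be invoked at all, one must show (i) that $a_C$ is densely defined (via density of $c_{00}(\mV)$, using that $\mathcal J$ is densely defined), elliptic and bounded on $\ell^2(\mV)$ --- this is where \eqref{eq:standing} enters, since $\gamma(\mv)=c(\mv')+c(\mw')\in[2c_0,2C_0]$ makes the weighted norm $\sum_{\mv}\gamma(\mv)|u(\mv)|^2$ equivalent to the $\ell^2$-norm --- whence a self-adjoint operator bounded from below (assertion 1); (ii) that this operator acts on $c_{00}(\mV)$ as the weighted adjacency matrix \eqref{eq:adjweighted}, i.e.\ the computation \eqref{Ac00}, which is the bridge between the form and Lemma~\ref{lem:mohar} and yields assertion 2; and (iii) the two-sided spectral inclusion of assertion 3, whose lower edge $-\|\gamma\|_\infty$ comes from $a_C(u)\ge -\|\gamma\|_\infty\|u\|^2_{\ell^2(\mV)}$. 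None of this appears in your writeup, so as a proof of the Proposition it is incomplete before part 4) even begins.

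Concerning assertion 4) itself, your domination scheme and the star eigenvalues $\pm\bigl(\sum_i a_i^2\bigr)^{1/2}$ are correct, but the lower bound has two genuine problems. First, in the weighted case one only gets $\bigl(\sum_i a_i^2\bigr)^{1/2}\ge\bigl(c_0\sum_i a_i\bigr)^{1/2}$, and this constant does \emph{not} wash out in Gelfand's formula --- it sits inside the eigenvalue of one fixed finite operator, not as a prefactor of an exponential rate --- so even your claim for $p\in[2,\infty]$ requires $c_0\ge 1$ (and $\|\deg_{\mG}^C\|_\infty\ge 1$) and is not secured under \eqref{eq:standing} as stated. Second, and more seriously, the gap you flag for $p\in[1,2)$ is not a technicality but the entire content of the assertion there, and no refinement of your single-star localisation can close it: the nonzero spectrum of a finite section is $p$-independent, hence capped by its $\ell^2$ value of order $(\deg_{\mG}^C)^{1/2}$, and likewise the matrix elements $(\mathcal A_C^{2n})_{\mv_0\mv_0}$ grow only at the $\ell^2$ rate. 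The exponent $1/p$ can only come from the $\ell^p$-mass of $\mathcal A_C^n\delta_{\mv_0}$ spreading over many walk endpoints --- von Below's test-function argument --- and the paper at this point does not reprove anything but simply writes that the estimate ``can be proved as in [Bel09, Thm.~3.1]''. Your suspicion about the norm/spectral-radius discrepancy is in fact well founded and shows how delicate this is: for the finite line graph $\mP_3$ (line graph of the path on four vertices) one has $\sigma(\mathcal A)=\{-\sqrt{2},0,\sqrt{2}\}$ on every $\ell^p$, so $s_1=\sqrt{2}<2=\|\deg_{\mG}\|_\infty$; hence any proof of the claimed bound for $p<2$ must exploit the infinite setting of Section~\ref{section:infinite} in an essential way (maximal weighted degree sustained along arbitrarily long walks), which is exactly the uniformity in $n$ that your sketch postpones and never establishes.
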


\begin{proof}
1) With the notation of Section~\ref{sec:append}, it is known that $\mathcal J$ is always densely defined since so are the positive and negative parts $\mathcal I^\pm$ of $\mathcal I$, cf.~\cite[Prop.~2.4 and Rem.~2.5]{Car11}: hence $a_C$ is densely defined.

Due to our assumptions
on $\mG$ and $C$
 the sequence $\left(\gamma(\mv)\right)_{\mv\in \mV}$ lies in $\ell^\infty(\mV)$. Thus, the symmetric form $a_C$ is $\ell^2(\mV)$-elliptic and bounded: indeed, the weighted $\ell^2$-norm $|\!\|u\|\!|:=\sum_{\mv\in\mV}|u(\mv)|^2 \gamma(\mv)$ is equivalent to the standard one on $\ell^2(\mV)$. In other words, $a_C$ is a closed quadratic form, hence it is associated with a self-adjoint operator $\mathcal A_C$ that is bounded from below.

2) The assertion is a consequence of Lemma~\ref{lem:mohar}, since a direct computation shows that $\mathcal A_C$ acts on functions with finite support as the matrix defined by
\begin{equation}\label{eq:adjweighted}
(\mathcal A_C)_{\mv\mw}=\begin{cases}
0\quad &\hbox{if either $\mv=\mw$ or $\mv,\mw$ are not adjacent},\\
c(\mv')&\hbox{if $\mv,\mw$ are distinct but adjacent vertices in $\mG$ (i.e., edges in its pre-line graph $\mH$)}\\
&\quad \hbox{and $\mv'\in\mV'$ is their common endpoint},
\end{cases}
\end{equation}
i.e.,
\begin{equation}\label{Ac00}
\mathcal A_C f(\mv)=\sum_{\mw\stackrel{\mv'}{\sim}\mv}c(\mv')f(w),\qquad \hbox{for all }f\in c_{00}(\mV)\hbox{ and }\mv\in\mV\ .
\end{equation}
(Here and in the following we denote by $c_{00}(\mV)$ the space of finitely supported functions from $\mV$ to $\C$.)

3) The upper and lower bound on the spectrum follows directly from~\eqref{eq:gerschgorin-reloaded} and the definition of $a_C$, respectively. The estimate on the spectral radius can be proved as in~\cite[Thm.~3.1]{Bel09}.
\end{proof}

\begin{cor}
Under the assumptions of Proposition~\ref{prop:mohar-reloaded}, $-\mathcal A_C$ generates a semigroup of self-adjoint, quasi-contractive analytic semigroup $(e^{-t\mathcal A_C})_{t\ge 0}$ on $\ell^2(\mV)$, which is additionally norm continuous -- and hence extends to a group $(e^{z\mathcal A_C})_{z\in \C}$ -- if and only if $\deg_{\mG}^C\in \ell^\infty(\mV)$.
\end{cor}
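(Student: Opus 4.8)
The plan is to read everything off Proposition~\ref{prop:mohar-reloaded} combined with the abstract theory of semigroups generated by self-adjoint operators. First I would invoke part~(1) of Proposition~\ref{prop:mohar-reloaded}, which already gives that $\mathcal A_C$ is self-adjoint and bounded from below on $\ell^2(\mV)$; indeed, since the first term of the form $a_C$ is nonnegative, one has the explicit bound $\mathcal A_C\ge -M$ with $M:=\|\gamma\|_\infty$. Consequently $-\mathcal A_C$ is self-adjoint and bounded from above by $M$, and the spectral theorem for self-adjoint operators shows at once that $-\mathcal A_C$ generates a self-adjoint analytic semigroup of angle $\frac{\pi}{2}$. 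The upper bound $-\mathcal A_C\le M$ yields $\|e^{-t\mathcal A_C}\|\le e^{tM}$ for all $t\ge 0$, i.e.\ quasi-contractivity. This settles the first assertion of the corollary without any further computation.

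For the equivalence I would first clarify the relevant notion of norm continuity. Since the semigroup is already analytic, it is automatically norm continuous for $t>0$; hence the extra requirement of norm continuity can only mean continuity in operator norm at $t=0$, i.e.\ uniform continuity $\lim_{t\downarrow 0}\|e^{-t\mathcal A_C}-\Id\|=0$. The classical fact to invoke is that a $C_0$-semigroup is uniformly continuous if and only if its generator is a bounded operator, and that in this case the semigroup extends to a group. Thus the semigroup is norm continuous precisely when $-\mathcal A_C$, equivalently $\mathcal A_C$, is bounded on $\ell^2(\mV)$.

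It remains to connect boundedness of $\mathcal A_C$ with the degree condition, and here part~(2) of Proposition~\ref{prop:mohar-reloaded} supplies exactly the missing link: $\mathcal A_C$ is bounded if and only if $\deg_{\mG}^C\in\ell^\infty(\mV)$. When this holds, the extension to a group is produced precisely as in the proof of Proposition~\ref{prop:basic-finite}, via the everywhere-convergent exponential series $e^{z\mathcal A_C}=\sum_{k=0}^\infty\frac{z^k}{k!}\mathcal A_C^k$ for $z\in\C$, which restricts to $(e^{-t\mathcal A_C})_{t\ge 0}$ upon taking $z=-t$. Chaining these equivalences closes both implications.

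I expect the only genuinely delicate point to be the interpretation of the phrase \emph{norm continuous} in the presence of analyticity; once this is fixed as continuity at $t=0$ (uniform continuity), the statement collapses to the standard dichotomy between uniform continuity and boundedness of the generator, and the whole proof reduces to a bookkeeping of the three relevant parts of Proposition~\ref{prop:mohar-reloaded}. No new estimates are required.
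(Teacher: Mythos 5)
Your proposal is correct and follows essentially the same route the paper intends: the corollary is stated without a separate proof precisely because it is read off from Proposition~\ref{prop:mohar-reloaded} in the way you describe --- part~(1) plus the spectral theorem (equivalently, the standard theory of semigroups associated with closed, elliptic symmetric forms) gives the self-adjoint, quasi-contractive analytic semigroup with bound $\|e^{-t\mathcal A_C}\|\le e^{t\|\gamma\|_\infty}$, and part~(2) combined with the classical equivalence ``uniformly continuous at $t=0$ $\Leftrightarrow$ bounded generator $\Rightarrow$ entire group via the exponential series'' yields the stated characterization. Your clarification that, in the presence of analyticity, ``norm continuous'' can only mean norm continuity at $t=0$ is exactly the right reading and is the only point requiring care.
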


\begin{exa}\label{exa:Z}
1) Let us consider the simplest infinite graph: the graph whose vertex set is $\Z$ and such that any two vertices are adjacent if and only if their difference is $\pm1$. This graph is regular and therefore 
its adjacency matrix is a bounded operator on $\ell^2(\mV)\equiv \ell^2(\Z)$, hence it generates a semigroup of bounded linear operators that can luckily be written down explicitly: for $f\in \ell^2(\mV)$ we have
\begin{equation}\label{eq:estliz}
e^{-t\mathcal L(\Z)}f(\mv)=\sum_{\mw\in \Z}f(\mw) \frac{1}{2\pi}\int_{-\pi}^\pi \cos((\mv-\mw)q) e^{-2t(1-\cos q)}dq=
\sum_{\mw\in \Z}\sum_{m=0}^\infty \frac{e^{-2t} t^{2m+\mv-\mw}}{m!(m+\mv-\mw)!}f(\mw),\qquad t\ge 0,\ \mv\in \Z
\end{equation}
where the first representation formula is taken from~\cite[Exa.~12.3.3]{Dav07} (or, slightly more explicitly,~\cite[Thm.~5.2]{EstHamHat17}) and the second from~\cite[Prop.~2]{CiaGilRon14}.

Because $\mathcal A=2-\mathcal L$, \eqref{eq:estliz} immediately yields analogous representations
\begin{equation}\label{eq:estliz-adj}
e^{t\mathcal A(\Z)}f(\mv)=\sum_{\mw\in \Z}f(\mw) \frac{1}{2\pi}\int_{-\pi}^\pi \cos((\mv-\mw)q) e^{2t\cos q}dq=
\sum_{\mw\in \Z}\sum_{m=0}^\infty \frac{ t^{2m+\mv-\mw}}{m!(m+\mv-\mw)!}f(\mw),\qquad t\ge 0,\ \mv\in \Z\ .
\end{equation}
 for the semigroup that governs the \textit{forward} evolution equation for~\eqref{eq:mainpde}.
These formulae show that the heat kernel associated with $\mathcal A$ is for each $t\ge 0$ and all $\mv,\mw\in \mV$ a strictly positive number, hence the forward semigroup $(e^{t\mathcal A})_{t\ge 0}$ is a positive semigroup with infinite speed of propagation: this is not surprising, since $\mG$ is connected and hence $\mathcal A$ is irreducible. We conclude that the \textit{backward} semigroup $(e^{t\mathcal A})_{t\le 0}$ is irreducible although non-positive, hence it enjoys infinite speed of propagation as well. 
Of course, $\Z$ is in particular a tree all of whose vertices have degree 2. An extension of the above formulae to the case of trees all of whose vertices have degree $k$ can be deduced from corresponding formulae obtained in~\cite[\S~5]{ChuYau98}.

2) Further formulae are known. It is known how the adjacency matrix of the Cartesian product $\mG_1\square \mG_2$ of graphs $\mG_1,\mG_2$ can be written as
\[
{\mathcal A}(\mG_1\square \mG_2)=\mathcal A(\mG_1)\otimes I_{\mG_2}+I_{\mG_1}\otimes {\mathcal A}(\mG_2)
\]
see~\cite[Thm.~2.21]{CveDooSac79}. In particular, this allows to write the adjacency matrix of $\mathbb Z^n$ as a sum: indeed,
\begin{equation}\label{eq:adjz}
{\mathcal A}(\Z^n)=\mathcal A(\Z)\otimes I_{\Z}\otimes \cdots \otimes I_{\Z}+  I_{\Z}\otimes\mathcal A(\Z)\otimes \cdots \otimes I_{\Z} +I_{\Z}\otimes I_{\Z}\otimes\cdots\otimes 
\mathcal A(\Z)\ 
\end{equation}
and it generates the tensor product semigroup
\[
e^{t\mathcal A(\Z^n)}=\bigotimes_{k=1}^n e^{t\mathcal A(\Z)}\ ,
\]
cf.~\cite[\S~A.1 3.7]{Nag86} and~\cite[Chapter 4]{HorJoh91}. Since by the elementary properties of the Kronecker product the addends in~\eqref{eq:adjz} commute,  one may even obtain a more explicit formula as an application of the Lie--Trotter product formula based on~\eqref{eq:estliz-adj}. Alternatively, one can use $\mathcal A=2n-\mathcal L$ and hence 
\[
e^{t\mathcal A(\Z^n)}f=e^{2tn}e^{-t\mathcal L(\Z^n)}f=e^{2tn}(k_t\ast f)\ ,
\]
with the heat kernel $k_t$ simply being the product of $n$ copies of the $1$-dimensional heat kernels obtained in 1), cf.~\cite[Exa.~12.5.9]{Dav07}.

3) Observe that if we consider a new graph $\mG'$ obtained adding or deleting a finite number of edges to $\Z^n$, its adjacency matrix is a finite rank perturbation of $\mathcal A(\Z^n)$ and hence the two semigroups have the same essential growth bound, cf.~\cite[Prop.~IV.2.12]{EngNag00}.
\end{exa}

Finding analogous formulae for generic line graphs is out of question:
we are therefore rather inclined to study qualitative properties of the backward equation~\eqref{eq:mainpde} by abstract semigroup theory, exploiting the quadratic form associated with $\mathcal A$ in the case of line graphs.

\section{Forward evolution equation on uniformly locally finite graphs}\label{sec:ulfgraphs}

By Lemma~\ref{lem:mohar}, the forward Cauchy problem is
well-posed if the edge weights of $\mG$ are bounded. This sufficient condition is by Proposition~\ref{prop:mohar-reloaded} also necessary even in the rather special case of a line graph. In this section we are going to study some properties of the semigroup that governs the forward Cauchy problem
\begin{equation}\label{eq:mainpde-forw-C}
\begin{cases}
\frac{du}{dt}(t,\mv)&=\mathcal A_C u(t,\mv), \qquad t\ge 0,\ \mv\in\mV\ ,\\
u(0,\mv)&=u_0(\mv),\qquad \mv\in \mV\ ,
\end{cases}
\end{equation}
where $\mathcal A_C$ is defined in~\eqref{def:acnonline},
 under the standing assumption that 
\begin{itemize}
\item $\mG=(\mV,\mE)$ is a possibly infinite but locally finite graph \item whose edges are weighted by $0<c\in \ell^\infty(\mE)$ and \item such that $\deg_\mG^C\in \ell^\infty(\mV)$ (recall~\eqref{eq:def-degc}).
\end{itemize}
This means that $\mG$ is uniformly locally finite with respect to $c$. Of course, this includes finite graphs as special cases. We already know that $\mathcal A$ is a bounded operator and that the exponential series
\begin{equation}\label{eq:exponential}
e^{z\mathcal A_C}:=\sum_{k=0}^\infty \frac{z^k}{k!}\mathcal A_C^k\ , \qquad z\in \mathbb C\ ,
\end{equation}
is norm convergent for all $z\in \C$ and hence defines a bounded linear operator on $\ell^p(\mV)$ for all $p\in [1,\infty]$.

\begin{prop}\label{prop:positforward}
The following assertions hold.
\begin{enumerate}
\item The semigroup $(e^{t\mathcal A_C})_{t\ge 0}$ is positive.
\item  It is irreducible and hence positivity improving if and only if $\mG$ is connected.
\item It is not  contractive with respect to the norm of $\ell^\infty(\mV)$, but  the rescaled semigroup $(e^{-t\|\deg_{\mG}^C\|_\infty}e^{t\mathcal A_C})_{t\ge 0}$ is.
\end{enumerate}
\end{prop}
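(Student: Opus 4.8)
The plan is to handle the three assertions in turn, exploiting throughout that, under the standing hypothesis $\deg_{\mG}^C\in\ell^\infty(\mV)$, the operator $\mathcal A_C$ is a bounded matrix all of whose entries are nonnegative. For assertion (1), I would simply read off from~\eqref{eq:adjweighted} that $\mathcal A_C$ has vanishing diagonal and strictly positive off-diagonal entries exactly on adjacent pairs, so every entry is nonnegative. Each power $\mathcal A_C^k$ is then again entrywise nonnegative, and since the exponential series~\eqref{eq:exponential} converges in operator norm, $e^{t\mathcal A_C}=\sum_{k\ge 0}\frac{t^k}{k!}\mathcal A_C^k$ is for each $t\ge 0$ a norm-convergent sum of nonnegative matrices, hence positive.

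For assertion (2), the key observation is the combinatorial identity that $(\mathcal A_C^k)_{\mv\mw}>0$ if and only if there is a walk of length $k$ joining $\mv$ and $\mw$ in $\mG$. Consequently $(e^{t\mathcal A_C})_{\mv\mw}=\sum_{k\ge 0}\frac{t^k}{k!}(\mathcal A_C^k)_{\mv\mw}$ is strictly positive for every $t>0$ precisely when some walk connects $\mv$ to $\mw$, i.e. when the two vertices lie in the same connected component. If $\mG$ is connected this holds for all pairs, so $e^{t\mathcal A_C}$ has a strictly positive kernel for each $t>0$ and is therefore positivity improving, which in particular forces irreducibility. Conversely, if $\mG$ is disconnected, each connected component $S$ yields a nontrivial closed ideal $\{f:\mathrm{supp}\,f\subseteq S\}$ that is invariant under $\mathcal A_C$, and hence under the semigroup, so the latter is not irreducible. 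This simultaneously establishes the stated equivalence and the implication ``irreducible $\Rightarrow$ positivity improving''.

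For assertion (3), I would first test the unrescaled semigroup against the constant function $\mathbf 1$: since $\mathcal A_C\mathbf 1=\deg_{\mG}^C$ and every further term of the exponential series is nonnegative, $e^{t\mathcal A_C}\mathbf 1\ge\mathbf 1+t\,\deg_{\mG}^C$, so its $\ell^\infty$-norm is at least $1+t\|\deg_{\mG}^C\|_\infty>1$ for $t>0$ (here $\|\deg_{\mG}^C\|_\infty>0$ because $\mE\neq\emptyset$ and $c>0$), whence the semigroup is not $\ell^\infty$-contractive. For the rescaled semigroup I set $B:=\mathcal A_C-\|\deg_{\mG}^C\|_\infty$; then $B\mathbf 1=\deg_{\mG}^C-\|\deg_{\mG}^C\|_\infty\mathbf 1\le 0$ pointwise, and combining this with positivity the map $t\mapsto e^{tB}\mathbf 1$ has derivative $e^{tB}B\mathbf 1\le 0$, so that $e^{tB}\mathbf 1\le\mathbf 1$ for all $t\ge 0$. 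For arbitrary $f$ with $\|f\|_\infty\le 1$ one then estimates $|e^{tB}f|\le e^{tB}|f|\le e^{tB}\mathbf 1\le\mathbf 1$, so $e^{tB}=e^{-t\|\deg_{\mG}^C\|_\infty}e^{t\mathcal A_C}$ is $\ell^\infty$-contractive; this is exactly the substochastic argument of~\cite[Lemma~6.1]{Mug07} already used in the finite case.

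The only delicate point is the irreducibility bookkeeping in (2): one must pass cleanly between the combinatorial statement (existence of walks), the analytic statement (strict positivity of the kernel, i.e. positivity improving) and the order-theoretic definition of irreducibility (absence of nontrivial invariant closed ideals). Because $\mathcal A_C$ is bounded and the semigroup analytic, these three formulations coincide, and the connected-component decomposition supplies the invariant ideals in the disconnected case; the remaining computations are routine.
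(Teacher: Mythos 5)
Your proof is correct, and in parts (2) and (3) it takes a genuinely different route from the paper's. For (2), the paper handles the hard direction by exhaustion: irreducibility is classical for the finite induced subgraphs containing a given vertex set, and the conclusion is transferred to the infinite graph via Mohar's convergence result \cite[Prop.~4.2]{Moh82}; you instead get strict positivity of the kernel directly from the walk identity $(\mathcal A_C^k)_{\mv\mw}>0$ if and only if $\mv$ and $\mw$ are joined by a walk of length $k$, summed through the norm-convergent exponential series. Your argument is more elementary and self-contained (no approximation result needed), and it delivers the positivity-improving property at once rather than through the general equivalence for positive semigroups. For (3), the paper again invokes Mohar's approximation together with the finite-dimensional criterion of \cite[Lemma~6.1]{Mug07} to exclude $\ell^\infty$-contractivity, and proves contractivity of the rescaled semigroup via the factorization $e^{t(\mathcal A_C-\|\deg_\mG^C\|_\infty)}=e^{t(\mathcal D-\|\deg_\mG^C\|_\infty)}e^{-t\mathcal L}$, which rests on $\ell^\infty$-contractivity of the Dirichlet-form semigroup $(e^{-t\mathcal L})_{t\ge 0}$ \cite{KelLen12} and on the asserted pairwise commutation of $\mathcal A_C$, $\mathcal D$, $\mathcal L$; your direct test against $\mathbf 1$ (giving $e^{t\mathcal A_C}\mathbf 1\ge \mathbf 1+t\deg_\mG^C$, hence operator norm at least $1+t\|\deg_\mG^C\|_\infty$) together with the substochasticity argument from $(\mathcal A_C-\|\deg_\mG^C\|_\infty)\mathbf 1\le 0$ avoids both ingredients --- in particular the commutation step, which in general holds only when adjacent vertices have equal weighted degree, so your version is arguably the more robust one. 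Two small repairs: the entrywise description of $\mathcal A_C$ relevant in this section is \eqref{def:acnonline}, not the line-graph-specific \eqref{eq:adjweighted}; and since $\mathbf 1\notin\ell^2(\mV)$ when $\mG$ is infinite, you should say explicitly that all computations in (3) take place in $\ell^\infty(\mV)$, where $\mathcal A_C$ is bounded by Lemma~\ref{lem:mohar}, so that the exponential series and the identity $\frac{d}{dt}e^{tB}\mathbf 1=e^{tB}B\mathbf 1$ are legitimate there.
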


\begin{proof}
1) Since $\mathcal A_C$ is a bounded operator on $\ell^2(\mV)$ and hence $(e^{t\mathcal A_C})_{t\ge 0}$ is given by means of~\eqref{eq:exponential}, it is a positivity preserving semigroup since its generator $\mathcal A_C$ is positivity preserving.

2) If $\mG$ is disconnected, then the semigroup is clearly reducible. Conversely, let $\emptyset\neq \tilde{\mV}$ be an arbitrary proper subset of $\mV$ and consider the characteristic function ${\mathbf 1}_{\tilde{\mV}}$: we have to show that $e^{t\mathcal A}{\mathbf 1}_{\tilde{\mV}}(\mw)>0$ for at least one $\mw\in \mV\setminus\tilde{\mV}$. Now, this is clear for all finite induced subgraphs $\mG_n=(\mV_n,\mE_n)$ of $\mG$ that contain $\tilde{\mV}$ (since finite adjacency matrices are irreducible if and only if their graph is connected), and hence the assertion follows by a convergence result due to Mohar~\cite[Prop.~4.2]{Moh82}, which clearly extends to general weighted adjacency matrices.

3) In order for $(e^{t\mathcal A_C})_{t\ge 0}$ to be $\ell^\infty$-contractive, again in view of~\cite[Prop.~4.2]{Moh82} all its finite submatrices ought to generate $\ell^\infty$-contractive semigroups; but this is impossible, since each of them violates the condition in~\cite[Lemma~6.1]{Mug07}.

On the other hand, 
\[
e^{t(\mathcal A-\|\deg_{\mG}^C\|_\infty)}=e^{-t\|\deg_\mG^C\|}e^{t\mathcal A}=e^{t(\mathcal D-\|\deg_\mG^C\|_\infty)}e^{-t\mathcal L} ,\qquad t\ge 0,
\]
since 
\[-\mathcal L:=\mathcal A-\mathcal D
\] with $\mathcal A,\mathcal L,\mathcal D$ pairwise commuting (here we denote by $\mathcal L,\mathcal D$ as usual the Laplacian matrix and the diagonal matrix of vertex degrees, respectively). Accordingly, for all $u\in \ell^p(\mV)$ with $\|u\|_\infty \le 1$ we find
\[
\|e^{t(\mathcal A-\|\deg_{\mG}^C\|_\infty)}u\|_\infty
\le \|e^{t(\mathcal D-\|\deg_\mG^C\|_\infty)}\| \|e^{-t\mathcal L} u\|_\infty\le \|e^{-t\mathcal L} u\|_\infty\le 1 ,\qquad t\ge 0,
\]
owing to the fact that $\mathcal D-\|\deg_\mG^C\|_\infty$ is a diagonal matrix with nonpositive entries and that $(e^{-t\mathcal L})_{t\ge 0}$ is well-known to be associated with a Dirichlet form~\cite{KelLen12} and hence to be $\ell^\infty(\mV)$-contractive.
\end{proof}

We can finally apply a recent result on kernel semigroups in order to discuss long-time behavior of the heat kernel. We emphasize that it does not rely upon compactness assumptions -- indeed, $(e^{t\mathcal A_C})_{t\ge 0}$ can never be a compact semigroup if $\mG$ is infinite, since $\mathcal A_C$ is either bounded or not a generator.

\begin{prop}\label{prop:asymptforward}
For all $\mv,\mw\in \mV$
\begin{equation}\label{eq:kellenvog}
e^{-ts(\mathcal A_C)}(e^{t\mathcal A_C})_{\mv,\mw}\to \Phi(\mv)\Phi(\mw)\quad \hbox{as }t\to\infty\ ,
\end{equation}
where $\Phi:\mV\to \R$ is the normalized Perron--Frobenius eigenvector if the spectral radius $s(\mathcal A_C)$ is an eigenvalue of $\mathcal A_C$ and $\Phi\equiv 0$ otherwise.
Furthermore, 
\[
\lim_{t\to\infty} \frac{\log (e^{t\mathcal A_C})_{\mv,\mw}}{t}=s(\mathcal A_C)\qquad \hbox{for all }\mv,\mw\in \mV\ .
\]
\end{prop}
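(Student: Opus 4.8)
The plan is to recognize Proposition~\ref{prop:asymptforward} as a direct application of the theory of positive kernel semigroups, exploiting the properties already established in Proposition~\ref{prop:positforward}. First I would verify the hypotheses needed to invoke the relevant long-time asymptotic result (of Keller--Lenz--Vogt type, as the equation label \eqref{eq:kellenvog} suggests): namely, that $(e^{t\mathcal A_C})_{t\ge 0}$ is a positive, self-adjoint, irreducible semigroup on $\ell^2(\mV)$ whose generator $\mathcal A_C$ is bounded. Positivity and irreducibility (for connected $\mG$) come from Proposition~\ref{prop:positforward}.(1)--(2); self-adjointness and boundedness come from Proposition~\ref{prop:mohar-reloaded}.(1)--(2) together with the standing assumption $\deg_\mG^C\in\ell^\infty(\mV)$. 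The semigroup admits a kernel trivially, since on $\ell^2(\mV)$ over a countable set every bounded operator is an integral operator against the counting measure, so $(e^{t\mathcal A_C})_{\mv,\mw}$ is simply the matrix entry.

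Next I would carry out the spectral decomposition. Since $\mathcal A_C$ is bounded and self-adjoint, its spectral radius $s(\mathcal A_C)=\esssup\sigma(\mathcal A_C)=\lambda_{\max}$ is the top of the spectrum. The key dichotomy is whether $s(\mathcal A_C)$ is an eigenvalue. If it is, irreducibility forces the corresponding eigenspace to be one-dimensional and spanned by a strictly positive Perron--Frobenius eigenvector $\Phi$, which we normalize in $\ell^2(\mV)$; the rescaled semigroup $e^{-ts(\mathcal A_C)}e^{t\mathcal A_C}$ then converges strongly (indeed entrywise) to the rank-one spectral projector $P=\langle\,\cdot\,,\Phi\rangle\Phi$, whose $(\mv,\mw)$-entry is precisely $\Phi(\mv)\Phi(\mw)$. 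If $s(\mathcal A_C)$ lies in the essential (continuous) spectrum and is not an eigenvalue, then the spectral measure has no atom at the top, the rescaled entries decay to $0$, and we set $\Phi\equiv 0$. Either way \eqref{eq:kellenvog} follows from the spectral theorem applied to the bounded self-adjoint $\mathcal A_C$, by writing $e^{-ts(\mathcal A_C)}e^{t\mathcal A_C}=\int_{\sigma(\mathcal A_C)}e^{t(\lambda-s(\mathcal A_C))}\,dE(\lambda)$ and letting $t\to\infty$, with dominated convergence controlling the integrand (bounded by $1$ and vanishing off the top of the spectrum).

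For the logarithmic asymptotics, the plan is to extract the exponential growth rate directly from \eqref{eq:kellenvog}. In the eigenvalue case, $(e^{t\mathcal A_C})_{\mv,\mw}\sim e^{ts(\mathcal A_C)}\Phi(\mv)\Phi(\mw)$ with a strictly positive prefactor, so dividing the logarithm by $t$ and sending $t\to\infty$ kills the constant term and yields $s(\mathcal A_C)$ immediately. In the general case (including when $s(\mathcal A_C)$ is not an eigenvalue), the same conclusion follows from the spectral representation: the upper bound $\limsup \tfrac{1}{t}\log(e^{t\mathcal A_C})_{\mv,\mw}\le s(\mathcal A_C)$ is the standard growth estimate $\|e^{t\mathcal A_C}\|\le e^{ts(\mathcal A_C)}$, while the matching lower bound follows from positivity and irreducibility, which guarantee the entries cannot decay faster than the top of the spectrum dictates (one bounds $(e^{t\mathcal A_C})_{\mv,\mw}$ below along a walk from $\mv$ to $\mw$ and uses the spectral mapping to recover the exponent $s(\mathcal A_C)$).

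I expect the main obstacle to be the borderline case in which $s(\mathcal A_C)$ sits at the top of the continuous spectrum without being an eigenvalue, since here the clean rank-one-projector argument is unavailable and one must argue that the spectral measure places no mass at the endpoint, forcing $\Phi\equiv 0$; this is exactly the delicate point that the cited kernel-semigroup machinery is designed to handle, and invoking it cleanly — rather than re-proving it — will be the crux.
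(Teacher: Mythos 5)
Your proposal is correct and takes essentially the same route as the paper: the paper's entire proof consists of observing $s(\mathcal A_C)\le\|\deg_{\mG}^C\|_\infty<\infty$ and then citing \cite[Thm.~3.1]{KelLenVog15}, i.e.\ exactly the positive-kernel-semigroup machinery you identify, whose hypotheses (positivity and irreducibility from Proposition~\ref{prop:positforward}, self-adjointness and boundedness from Lemma~\ref{lem:mohar}, kernel given trivially by matrix entries over the counting measure) you verify correctly. Your supplementary spectral-theorem sketch of why the theorem holds in the discrete setting is sound in outline, so invoking the citation rather than re-proving it -- as you yourself anticipate -- is indeed all that is required.
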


\begin{proof}
Observe that $s(\mathcal A_C)\le\|\deg_{\mG}^C\|_\infty <\infty$ by Proposition~\ref{prop:basic-finite}. The assertion thus follows from~\cite[Thm.~3.1]{KelLenVog15}.
\end{proof}

\begin{exa}
Let $\mG=\Z$.
It is known (see e.g.~\cite{KilSim03}) that the unweighted adjacency matrix $\mathcal A(\Z)$ has no eigenvalues and its spectrum is $[-2,2]$. Hence, by Proposition~\ref{prop:asymptforward} we deduce that for all $\mv,\mw\in \mV$
\[
e^{-2t}(e^{t\mathcal A})_{\mv,\mw}\to 0\quad \hbox{as }t\to\infty\ .
\]
\end{exa}

Comparison principles for the forward evolution equation
\begin{equation}\label{eq:mainpde-lapl-forw}
\frac{du}{dt}(t)=-\mathcal Lu(t)\ ,\qquad t\ge 0\ , 
\end{equation}
 associated with $-\mathcal L$ on different underlying graphs $\mG,\tilde{\mG}$ follow immediately from classical results of Fiedler and later authors. For instance, provided $\tilde{\mG},\mG$ share the same vertex set, convergence to equilibrium  for the solution of~\eqref{eq:mainpde-lapl-forw} is by~\cite[Cor.~3.2]{Fie73} at most as fast on $\tilde{\mG}$ as on $\mG$ if the edge set of $\tilde{\mG}$ is contained in the edge set of $\mG$. In the case of the forward Cauchy problem~\eqref{eq:mainpde-forw} we are able to describe a different behavior, namely, domination of $(e^{t\mathcal A(\tilde{\mG})})_{t\ge 0}$  by
 $(e^{t\mathcal A(\mG)})_{t\ge 0}$, i.e., the property that
\[
e^{t\mathcal A(\tilde{\mG})}f(\mv)\le e^{t\mathcal A({\mG})}f(\mv)\qquad\hbox{for all } t\ge 0,\ 0\le f\in \R^{|\mV|},\hbox{ and }\mv\in \mV\ .
\] 
The following is a direct consequence of~\cite[Thm.~2.24]{Ouh05}. We emphasize that it holds for general graphs (not necessarily line graphs!).
 
 \begin{prop}\label{prop:domindortm}
 Let $\mG,\tilde{\mG}$ two different graphs sharing the same vertex set $\mV$. Then the following assertions are equivalent.
 \begin{enumerate}[(i)]
\item  $(e^{t\mathcal A(\tilde{\mG})})_{t\ge 0}$ is dominated by
 $(e^{t\mathcal A(\mG)})_{t\ge 0}$.
 \item $\mathcal A(\tilde{\mG})_{\mv\mw}\le \mathcal A(\mG)_{\mv\mw}$ for all vertices $\mv,\mw\in \mV$.
 \item $\tilde{\mG}$ is a subgraph of $\mG$.
\end{enumerate} 
 \end{prop}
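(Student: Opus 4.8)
The plan is to prove the three-way equivalence by establishing a cycle of implications, anchored on the abstract domination criterion of Ouhabaz \cite[Thm.~2.24]{Ouh05}. The natural order is (iii)$\Rightarrow$(ii)$\Rightarrow$(i)$\Rightarrow$(iii), since the first two arrows are essentially combinatorial bookkeeping, while the genuine analytic content is packaged into the single citation. Throughout I would work with the unweighted adjacency matrices $\mathcal A(\mG)$ and $\mathcal A(\tilde\mG)$ as bounded, self-adjoint operators on $\ell^2(\mV)$ — boundedness being guaranteed by Lemma~\ref{lem:mohar} under the standing uniform local finiteness, and self-adjointness being immediate from symmetry of the matrices.

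First I would record the trivial equivalence (ii)$\Leftrightarrow$(iii): since both graphs are simple and share the vertex set $\mV$, each off-diagonal entry $\mathcal A(\mG)_{\mv\mw}$ is either $0$ or $1$ according to whether $\{\mv,\mw\}\in\mE$, and likewise for $\tilde\mG$; the diagonals vanish. Hence $\mathcal A(\tilde\mG)_{\mv\mw}\le \mathcal A(\mG)_{\mv\mw}$ for all $\mv,\mw$ says exactly that every edge of $\tilde\mG$ is an edge of $\mG$, which is the definition of $\tilde\mG$ being a subgraph of $\mG$. This is a one-line observation and carries no analytic weight.

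The substantive step is (ii)$\Rightarrow$(i). Here I would invoke Ouhabaz's domination criterion, which characterizes domination $0\le e^{t\mathcal A(\tilde\mG)}f\le e^{t\mathcal A(\mG)}f$ (for $0\le f$) in terms of the two quadratic forms: one needs that the form domain of $\mathcal A(\tilde\mG)$ be an ideal in the form domain of $\mathcal A(\mG)$, together with a comparison of the off-diagonal form contributions. In the present bounded setting both forms are defined on all of $\ell^2(\mV)$, so the ideal condition is automatic, and the criterion reduces to the pointwise inequality of off-diagonal entries — precisely (ii). I would also note that positivity of $(e^{t\mathcal A(\mG)})_{t\ge0}$, already available from Proposition~\ref{prop:positforward}, is the prerequisite that makes the notion of domination meaningful; the same holds for $\tilde\mG$. \textbf{The main obstacle} is marshalling the hypotheses of \cite[Thm.~2.24]{Ouh05} into this discrete bounded-operator language: Ouhabaz's theorem is stated for forms with a Beurling--Deny structure, so I would need to verify (or cite Proposition~\ref{prop:positforward}) that each $\mathcal A(\cdot)$ generates a positive semigroup and that the relevant real, symmetric form comparison genuinely matches the sign convention and the ``positive off-diagonal'' normalization used there.

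Finally, (i)$\Rightarrow$(iii) closes the loop and is the easiest direction to see directly: if $\tilde\mG$ were \emph{not} a subgraph of $\mG$, there would exist an edge $\{\mv,\mw\}\in\tilde\mE\setminus\mE$. Testing domination against $f=\mathbf 1_{\{\mw\}}$ and examining the short-time expansion $e^{t\mathcal A}\,\mathbf 1_{\{\mw\}} = \mathbf 1_{\{\mw\}} + t\,\mathcal A\,\mathbf 1_{\{\mw\}} + O(t^2)$ at the vertex $\mv$, the first-order term equals $\mathcal A(\tilde\mG)_{\mv\mw}=1$ on the dominated side but $\mathcal A(\mG)_{\mv\mw}=0$ on the dominating side, contradicting the inequality $e^{t\mathcal A(\tilde\mG)}f(\mv)\le e^{t\mathcal A(\mG)}f(\mv)$ for small $t>0$. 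This differentiation-at-zero argument is clean precisely because the operators are bounded, so I would not even need the full strength of Ouhabaz for this converse. Assembling the three arrows then yields the stated equivalence.
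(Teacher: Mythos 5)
Your proposal is correct and follows essentially the paper's route: the paper proves this proposition solely by invoking \cite[Thm.~2.24]{Ouh05}, whose two-sided characterization of domination in terms of the associated forms yields (i)$\Leftrightarrow$(ii) in both directions at once (and (ii)$\Leftrightarrow$(iii) is the same trivial bookkeeping you record, since both adjacency matrices have $0$--$1$ entries). Your only deviation is cosmetic: you use just the sufficiency half of Ouhabaz's criterion and close the cycle with a self-contained short-time expansion for (i)$\Rightarrow$(iii), which is a valid elementary substitute for the necessity half of that citation.
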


\begin{rem}
It is easy to see that Proposition~\ref{prop:domindortm} extends to perturbations of the adjacency matrix by diagonal matrices. In particular, for the Laplacian $\mathcal L$ one deduces that  $(e^{-t\mathcal L(\tilde{\mG})})_{t\ge 0}$ is never dominated by $(e^{-t\mathcal L(\mG)})_{t\ge 0}$ (as usual, in the sense of positivity preserving operators) if $\tilde{\mG}$ is a strict subgraph of $\mG$. However,  for the signless Laplacian $\mathcal Q$ one does find that  $(e^{t\mathcal Q(\tilde{\mG})})_{t\ge 0}$ is dominated by $(e^{t\mathcal Q(\mG)})_{t\ge 0}$.
\end{rem}

\begin{rem}
There are a few models based on the linear dynamical system~\eqref{eq:mainpde} in applied sciences: let us mention the possibly simplest model for the spread of diseases with recovery and repeated infection, the so-called \textit{SIS model}.
In a network context it can be formulated as
\begin{equation}\label{eq:SI}
\frac{du}{dt}(t)=\beta \mathcal Au(t)-\beta \diag(u(t))\mathcal Au(t)-\delta u(t)  ,\qquad t\ge 0\ , 
\end{equation}
for some $\beta,\delta>0$, see~\cite{WanChaWan03}: hence, the \textit{forward} evolution equation associated with~\eqref{eq:mainpde} can be seen as a simple linearisation of~\eqref{eq:SI}, up to scalar (additive and multiplicative) perturbations: more precisely, the solution of the linear part of~\eqref{eq:SI} for initial value $u(0)=u_0$ is given by
\[
u(t)=e^{-\delta t}e^{\beta t\mathcal A}u_0,\qquad t\ge 0\ ;
\]
therefore, investigating qualitative properties of the solution to~\eqref{eq:mainpde} helps unveiling features of the SIS model: well-posedness and regularity properties of~\eqref{eq:SI} on an \textit{infinite} network could then be obtained by techniques based on maximal regularity properties, cf.~\cite[Chapt.~7]{Lun95} or~\cite[Chapt.~10]{ChiFas10}. Unsurprisingly, it has been observed in the computer science community that the spectral radius of $\mathcal A$ is the main parameter when it comes to describe the long-time behavior of~\eqref{eq:SI}: indeed, an \textit{epidemic threshold} is determined by $\lambda_{\max}(\mathcal A)$, $\beta$, and $\delta$ alone, cf.~\cite{GanMasTow05,ChaWanWan08,vanOmiKoo09,van11b}.
\end{rem}

\begin{rem}
A different but related model is discussed in~\cite{SarBoy96}: for the purposes of segmentation of an image encoded by a vector $u_0\in \R^N$ (where $N$ is the number of pixels), the authors propose to study the Perron eigenvector of $\mathcal A$. As we know from~\eqref{eq:engnag2} that ~\eqref{eq:mainpde} converges to the Perron eigenprojector upon suitable rescaling, studying the semigroup generated by $\mathcal A$ can be regarded as a relaxation of the classic spectral approach: While the Perron eigenvector is obtained by taking $\lim_{t\to \infty}e^{-t\lambda_{\max}}e^{t\mathcal A}u_0$, depending on the relevant picture significant information might  be obtained also by means of $e^{t\mathcal A}u_0$ for finite $t$. Following an intuition by Chung~\cite{Chu07,Gle15}, $(e^{t\mathcal A}u_0)	_{t\ge 0}$ might then be regarded as a whole family of approximate segmentations.
\end{rem}

\begin{rem}
It was observed in~\cite[Prop.~2.2]{BalGolJer15} that the only graphs isomorphic to their own line graph are $\Z$, $\N$ and the cycle graphs, so the $\inf$ of the spectrum of $\mathcal A$ on these graphs is not smaller than $-2$.  On the other hand, the line graph of $\Z^n$ is a $(4n-2)$-regular graph, so $\mathcal A$ is a bounded and hence self-adjoint operator. However, the spectrum of the adjacency matrix on $\Z^n$ is $[-2n,2n]$ (see e.g.~\cite[\S~7]{MohWoe89}, so $\mathbb Z^n$ is not a line graph for any $n\ne 1$.

In the case of $\mG=\mH=\mathbb Z$, the degree function is constant and hence $\mathcal A$ is self-adjoint and it is known that its essential spectrum is $[-2,2]$.
The spectral theory of $\mathcal A$ has been thoroughly studied in~\cite{KilSim03} as a special case of a Jacobi matrix -- i.e., of a discrete Schrödinger operator $-\mathcal L-V$ on $\Z$.
 In particular, a characterization of the Hilbert--Schmidt property of $\mathcal A-(-\mathcal L-V)$ in terms of $V$ has been obtained in~\cite[Thm.~1]{KilSim03}. A necessary condition for the absolutely continuous spectrum of $\mathcal L-V$ to be $[-2,2]$ has been found in~\cite[Thm.~7]{KilSim03}; finally, \cite[Thm.~8]{KilSim03} states that $\mathcal A$ is the only bounded perturbation of $-\mathcal L$ with empty point spectrum. Besides the intrinsic interest in spectral theory, these results by Killip and Simon are also relevant for our parabolic setting: e.g., \cite[Prop.~IV.2.12]{EngNag00} states that if  $\mathcal A-(-\mathcal L-V)$ is compact, then the semigroups generated by $\mathcal A$ and by $-\mathcal L-V$ have same essential growth bound.
 
 If however the graph is not uniformly locally finite, then $\mathcal D$ is not a bounded perturbation -- in fact, at least on sparse graphs, not even a small form perturbation by~\cite[Thm.~1.1]{BonGolKel15} -- and one cannot generally conclude that $\mathcal A$ generates a semigroup if so does $-\mathcal L$. 
Thus, what our results show is that there is a class of graphs for which certain discrete Schrödinger operators enjoy forward well-posedness even if the free Laplacian does not. Conversely, our results also signify that Kato-type theory 
for discrete Laplacians is trickier than in the classical space-continuous setting: either $\mathcal D$ is bounded, or $-\mathcal L+\mathcal D$ is self-adjoint but not bounded from above.
\end{rem}

Let us conclude this section by briefly discussing the interplay of the dynamical system~\eqref{eq:mainpde} with the symmetries of the underlying graph $\mG$.

A \emph{permutation} is a symmetric matrix whose entries are all $0$ apart from exactly one $1$ on each of its columns/rows. An \emph{automorphism} is
a permutation that commutes with $\mathcal A(\mG)$.

Each permutation is clearly a doubly stochastic matrix. Tinhofer introduced in~\cite{Tin86} the notion of 
\emph{doubly stochastic automorphism} of a graph $\mG$
 as a matrix that commutes with $\mathcal A(\mG)$ but is merely doubly stochastic. 
 Automorphisms are of course double stochastic automorphisms, and so are averaging operators over orbits of automorphisms and more generally orthogonal projectors associated with equitable partitions~\cite{GodMcK80}; however, there exist graphs that admit doubly stochastic automorphisms which are \textit{not} convex combinations of automorphisms -- among other, the Petersen graph~\cite{EvdKarPon99}. Tinhofer's definition can be extended to infinite graphs verbatim.
 
 \begin{defi}
 An infinite matrix $\mathcal O$ is called \emph{doubly stochastic} if each of its entries is nonnegative and each of its rows as well as each of its columns has entries summing up to 1.
 \end{defi}

 \begin{lemma}
 Each doubly stochastic automorphism indexed in $\mV\times \mV$ defines a bounded linear operator -- in fact, a contraction -- on $\ell^p(\mV)$ for all $p\in [1,\infty]$.
 \end{lemma}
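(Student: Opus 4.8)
The plan is to isolate the only properties that actually matter, namely that $\mathcal O$ has nonnegative entries with all row sums and all column sums equal to $1$ (the commutation with $\mathcal A(\mG)$ that distinguishes an automorphism plays no role in the boundedness statement). I would establish the contraction property first on the two extreme spaces $\ell^1(\mV)$ and $\ell^\infty(\mV)$ and then interpolate, exactly as in the proof of Lemma~\ref{lem:mohar}. The two endpoint estimates are precisely where the two defining normalizations enter, each one being responsible for one endpoint: column normalization controls $\ell^1$ and row normalization controls $\ell^\infty$.

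First I would check $\ell^1$-contractivity. For $f\in\ell^1(\mV)$ one estimates, using nonnegativity of the entries together with Tonelli's theorem to exchange the (unconditionally summable) double series,
\[
\|\mathcal O f\|_1=\sum_{\mv\in\mV}\left|\sum_{\mw\in\mV}\mathcal O_{\mv\mw}f(\mw)\right|\le\sum_{\mw\in\mV}|f(\mw)|\sum_{\mv\in\mV}\mathcal O_{\mv\mw}=\|f\|_1,
\]
where the last equality uses that each column of $\mathcal O$ sums to $1$. Dually, for $f\in\ell^\infty(\mV)$ one bounds $|\mathcal O f(\mv)|\le\sum_{\mw\in\mV}\mathcal O_{\mv\mw}|f(\mw)|\le\|f\|_\infty$, the row normalization guaranteeing $\sum_{\mw\in\mV}\mathcal O_{\mv\mw}=1$; taking the supremum over $\mv$ yields $\ell^\infty$-contractivity. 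These two bounds in particular show that $\mathcal O f$ is well defined on $\ell^1(\mV)$ and on $\ell^\infty(\mV)$.

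With contractivity on $\ell^1(\mV)$ and $\ell^\infty(\mV)$ in hand, the Riesz--Thorin Interpolation Theorem immediately gives $\|\mathcal O\|_{\mathcal L(\ell^p(\mV))}\le 1$ for every $p\in[1,\infty]$, which is the claim. Alternatively, one may bypass interpolation and treat the range $p\in[1,\infty)$ in one stroke: since each row of $\mathcal O$ is a probability distribution on $\mV$, Jensen's inequality applied to the convex map $t\mapsto|t|^p$ gives $|\mathcal O f(\mv)|^p\le\sum_{\mw\in\mV}\mathcal O_{\mv\mw}|f(\mw)|^p$, whence summing over $\mv$ and exchanging the order of summation via column normalization yields $\|\mathcal O f\|_p\le\|f\|_p$ directly. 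I do not expect any genuine obstacle: because all entries are nonnegative, every interchange of summation is justified by Tonelli's theorem and no delicate convergence issue arises; the only point worth a word is that the defining series for $\mathcal O f$ converge for $f\in\ell^p(\mV)$, which is automatic once the endpoint bounds are in place (or, in the Jensen argument, from the finiteness of the right-hand side). The real content of the statement is thus the bookkeeping of which normalization governs which endpoint.
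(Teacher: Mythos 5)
Your proof is correct, and it is slightly more self-contained than the paper's. The paper obtains the result from three ingredients: contractivity on $\ell^2(\mV)$ is quoted from Taylor's book (Thm.~6.12-A in the reference cited as \cite{Tay58}), contractivity on $\ell^\infty(\mV)$ is checked directly, and the remaining exponents are filled in ``by interpolation and duality'' -- Riesz--Thorin covers $p\in[2,\infty]$, and duality handles $p\in[1,2]$ because the transpose of a doubly stochastic matrix is again doubly stochastic. You instead prove \emph{both} endpoints $p=1$ and $p=\infty$ by hand -- column normalization for $\ell^1$, row normalization for $\ell^\infty$, with Tonelli justifying the interchange -- and then interpolate once, which eliminates both the external citation and the duality step. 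Your endpoint computations are sound, and you correctly note that the convergence of the defining series is automatic from the endpoint bounds (for $\ell^1$ one can also observe $\mathcal O_{\mv\mw}\le 1$ entrywise). The alternative Jensen argument is also valid: since the double sum $\sum_{\mv}\sum_{\mw}\mathcal O_{\mv\mw}|f(\mw)|^p=\|f\|_p^p$ is finite and all terms are nonnegative, each row sum $\sum_{\mw}\mathcal O_{\mv\mw}|f(\mw)|^p$ is finite, so Jensen's inequality for the probability measure given by each row applies and the direct estimate goes through. What the paper's route buys is brevity (the $p=2$ case is a known quotable fact); what yours buys is a fully elementary, one-pass argument with no appeal to duality -- either is perfectly acceptable.
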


\begin{proof}
 The assertion for $p=2$ holds as a direct consequence of~\cite[Thm.~6.12-A]{Tay58}, whereas contractivity can be checked directly for $p=\infty$. The general case is proved by interpolation and duality.
\end{proof}

\begin{defi}
A \textit{doubly stochastic automorphism} of $\mG$ is a doubly stochastic matrix $\mathcal O$ indexed in $\mV\times \mV$ such that
$\mathcal{OA}=\mathcal {AO}$.
\end{defi}

Observe that this commutation relation is well-defined because both $\mathcal A$ and $\mathcal O$ are bounded operators. We conclude that doubly stochastic automorphisms are symmetries for the parabolic equation~\eqref{eq:mainpde}, as long as $(e^{z\mathcal A})_{z\in \C}$ is defined by means of the exponential formula.

\begin{prop}
Let $\mG$ be uniformly locally finite and $\mathcal O$ be a doubly stochastic automorphism of $\mG$. Then 
\[
\mathcal O e^{z\mathcal A}=e^{z\mathcal A}\mathcal O\qquad\hbox{  for all }z\in \mathbb C\ .
\]
\end{prop}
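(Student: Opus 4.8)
The plan is to exploit the fact that under uniform local finiteness the operator $\mathcal A$ is bounded, so that $e^{z\mathcal A}$ is genuinely given by the norm-convergent exponential series as in~\eqref{eq:exponential}, and then to push the (bounded) operator $\mathcal O$ through that series term by term.

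First I would record that by Lemma~\ref{lem:mohar} the uniform local finiteness of $\mG$ guarantees that $\mathcal A$ defines a bounded linear operator on $\ell^p(\mV)$ for every $p\in[1,\infty]$, so that the series $\sum_{k=0}^\infty \frac{z^k}{k!}\mathcal A^k$ converges in operator norm to $e^{z\mathcal A}$ for each $z\in\C$. Next, the preceding Lemma shows that $\mathcal O$ too is a bounded operator -- in fact a contraction -- on $\ell^p(\mV)$, so that the two products $\mathcal O e^{z\mathcal A}$ and $e^{z\mathcal A}\mathcal O$ are both well defined as bounded operators.

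The algebraic core is a one-line induction: from the defining relation $\mathcal{OA}=\mathcal{AO}$ one obtains $\mathcal O\mathcal A^k=\mathcal A^k\mathcal O$ for every $k\in\N$, since $\mathcal O\mathcal A^{k}=\mathcal O\mathcal A\,\mathcal A^{k-1}=\mathcal A\,\mathcal O\mathcal A^{k-1}=\cdots=\mathcal A^k\mathcal O$. Multiplying the partial sums $S_N(z):=\sum_{k=0}^N \frac{z^k}{k!}\mathcal A^k$ by $\mathcal O$ on either side and applying this commutation relation termwise yields $\mathcal O S_N(z)=S_N(z)\mathcal O$ for every $N\in\N$.

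Finally I would let $N\to\infty$. Because $S_N(z)\to e^{z\mathcal A}$ in operator norm and left (resp.\ right) multiplication by the bounded operator $\mathcal O$ is continuous in the operator-norm topology, passing to the limit in $\mathcal O S_N(z)=S_N(z)\mathcal O$ gives $\mathcal O e^{z\mathcal A}=e^{z\mathcal A}\mathcal O$, as claimed. The only point requiring any care -- which I would present as the main step rather than a genuine obstacle -- is the justification of this interchange of $\mathcal O$ with the infinite sum; it is precisely here that boundedness of both $\mathcal A$ (to guarantee norm convergence of the series) and $\mathcal O$ (to guarantee continuity of the multiplication) is used, and hence exactly where the hypothesis of uniform local finiteness of $\mG$ enters.
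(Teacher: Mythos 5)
Your proof is correct and follows exactly the route the paper intends: the paper states this proposition without a formal proof, but its preceding remarks (that the commutation relation is well-defined because both $\mathcal A$ and $\mathcal O$ are bounded, and that $(e^{z\mathcal A})_{z\in\C}$ is given by the exponential formula) presuppose precisely your argument of commuting $\mathcal O$ with each power $\mathcal A^k$ and passing to the norm limit of the partial sums. You have merely made explicit the termwise commutation and the continuity of multiplication by a bounded operator, which is a welcome clarification rather than a deviation.
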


\section{Backward evolution equation on general line graphs}

In Section~\ref{section:infinite} we have proved that 
$\mathcal A_C-\|\deg_{\mG}^C\|_\infty $ is associated with a Dirichlet form and thus generates a sub-Markovian semigroup. This is remarkable because $(e^{-t\|\deg_{\mG}^C\|_\infty }e^{t\mathcal A_C})_{t\ge 0}$ does not seem to have an obvious interpretation as a diffusion-like semigroup.

However, Proposition~\ref{prop:mohar-reloaded} precludes the possibility of well-posedness of the forward Cauchy problem~\eqref{eq:mainpde-forw-C} unless $\mG$ is uniformly locally finite with respect to the edge weight. It thus seems that for graphs of unbounded degree it is the backward Cauchy problem
\begin{equation}\label{eq:mainpde-backw-C}
\begin{cases}
\frac{du}{dt}(t,\mv)&=\mathcal A_Cu(t,\mv), \qquad t\le 0,\ \mv\in\mV\ ,\\
u(0,\mv)&=u_0(\mv),\qquad \mv\in \mV\ ,
\end{cases}
\end{equation}
 the more appropriate playground for the adjacency matrix.

Throughout this section we are going to impose the following assumptions:
\begin{itemize}
\item $
\mG=(\mV,\mE)$ is the line graph of a locally finite, connected graph $\mH=(\mV',\mE')$\footnote{ In particular $\mV=\mE'$, so we will not distinguish between $\mv\in \mV$ and the corresponding $\me'\in \mE'$.};
\item the edges of $\mH$ are weighted by $c:\mV'\to \R$ s.t.\ $\Gamma \ge c(\mv')\ge \gamma$  for some $\Gamma>\gamma>0$  and all $\mv'\in \mV'$;
\item the edges of $\mG$ are weighted accordingly to~\eqref{eq:adjweighted}.
\end{itemize}

\begin{lemma}
The vector space 
\[V:=\{u\in \ell^2(\mV): \mathcal J u\in \ell^2(\mV')\}
\]
is dense in $\ell^2(\mV)\simeq \ell^2(\mE')$. It is a Hilbert space with respect to the inner product
\[
\begin{split}
(u|v)_V&:=(\mathcal J u|\mathcal J v)_{\ell^2(\mV')}+(u|v)_{\ell^2(\mV)}\\
&=\sum_{\mv'\in \mV'}\left|\sum_{\me'\sim \mv'} u(\me') \right|^2+\sum_{\me'\in \mE'}\left|u(\me') \right|^2\ .
\end{split}
\]
\end{lemma}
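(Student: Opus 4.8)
The plan is to regard $V$ as the domain of the incidence operator $\mathcal J$, viewed as an a priori unbounded operator from $\ell^2(\mV)$ to $\ell^2(\mV')$, equipped with the associated graph norm $\|u\|_V^2=\|\mathcal J u\|^2_{\ell^2(\mV')}+\|u\|^2_{\ell^2(\mV)}$. The two assertions then amount to showing (i) that this domain is dense and (ii) that $\mathcal J$ is a closed operator, so that $V$ is complete.

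First I would settle density. The space $c_{00}(\mV)$ of finitely supported functions is dense in $\ell^2(\mV)$, so it suffices to check $c_{00}(\mV)\subseteq V$. If $u$ is supported on a finite set $S\subseteq \mE'=\mV$, then $\mathcal Ju(\mv')=\sum_{\me'\sim \mv'}u(\me')$ vanishes unless $\mv'$ is an endpoint of some edge in $S$; there are at most $2|S|$ such vertices, so $\mathcal Ju$ has finite support and in particular lies in $\ell^2(\mV')$. Hence $c_{00}(\mV)\subseteq V$, and density follows. That $(\cdot|\cdot)_V$ is an inner product is immediate: sesquilinearity is clear, and $(u|u)_V=\|\mathcal Ju\|^2_{\ell^2(\mV')}+\|u\|^2_{\ell^2(\mV)}\ge \|u\|^2_{\ell^2(\mV)}$, which is strictly positive for $u\ne 0$.

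The substance is completeness. Given a $\|\cdot\|_V$-Cauchy sequence $(u_n)\subset V$, the inequalities $\|u\|_{\ell^2(\mV)}\le \|u\|_V$ and $\|\mathcal Ju\|_{\ell^2(\mV')}\le \|u\|_V$ show that $(u_n)$ is Cauchy in $\ell^2(\mV)$ and $(\mathcal Ju_n)$ is Cauchy in $\ell^2(\mV')$; by completeness of these spaces there exist $u\in \ell^2(\mV)$ and $w\in \ell^2(\mV')$ with $u_n\to u$ and $\mathcal Ju_n\to w$ in the respective norms. I then need to identify $w=\mathcal Ju$, which is exactly the closedness of $\mathcal J$ and is where local finiteness enters. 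Since $\ell^2$-convergence dominates pointwise convergence, $u_n(\me')\to u(\me')$ for every $\me'$; fixing a vertex $\mv'$, local finiteness of $\mH$ makes $\mathcal Ju_n(\mv')=\sum_{\me'\sim \mv'}u_n(\me')$ a \emph{finite} sum, so it converges to $\sum_{\me'\sim \mv'}u(\me')=\mathcal Ju(\mv')$. On the other hand $\mathcal Ju_n\to w$ in $\ell^2(\mV')$ forces $\mathcal Ju_n(\mv')\to w(\mv')$, whence $w(\mv')=\mathcal Ju(\mv')$ for all $\mv'$. Thus $\mathcal Ju=w\in \ell^2(\mV')$, so $u\in V$, and $\|u_n-u\|_V^2=\|u_n-u\|_{\ell^2(\mV)}^2+\|\mathcal Ju_n-w\|_{\ell^2(\mV')}^2\to 0$, proving $V$ complete.

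The only genuinely delicate point is the identification $w=\mathcal Ju$: this is precisely the closedness of the incidence operator, and it hinges on local finiteness of $\mH$, which is what allows the pointwise limit to pass through the finite vertex-sums defining $\mathcal J$. Everything else is routine.
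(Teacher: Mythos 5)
Your proof is correct and follows essentially the same route as the paper: density via $c_{00}(\mV)\subseteq V$, and completeness by identifying $V$ isometrically with the graph of $\mathcal J$ inside $\ell^2(\mV)\times\ell^2(\mV')$. In fact you make explicit the one point the paper's one-line isometry remark leaves implicit, namely that the graph is \emph{closed} --- i.e.\ that $\mathcal J$ is a closed operator, which you correctly derive from local finiteness of $\mH$ by passing pointwise limits through the finite vertex sums $\sum_{\me'\sim\mv'}u_n(\me')$.
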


\begin{proof}
The space $V$ is dense in $\ell^2(\mV)$ because so is $c_{00}(\mV)\subset V$. It is a Hilbert space because
\[
V\ni u\mapsto (u,\mathcal Ju)\in \ell^2(\mV)\times \ell^2(\mV')
\]
is an isometry between $V$ and $\ell^2(\mV)\times \ell^2(\mV')$. 
\end{proof}

We study again the quadratic form 
\[
a_C(u):=\sum_{\mv'\in\mV'} c(\mv')|\mathcal J u(\mv')|^2-\sum_{\mv\in\mV}\gamma(\mv)|u(\mv)|^2\ ,
\]
this time with domain
\[
D^N(a_C):=V
\]
or
\[
D^D(a_C):=\overline{c_{00}(\mV)}^{\|\cdot\|_V}\ .
\]

\begin{rem}
1) Line graphs of radial trees of unbounded degree are instances of graphs that can be treated by the techniques of this section but not those of Section~\ref{sec:ulfgraphs}.

2) If we add or delete a finite number of edges of a line graph $\mG$, the resulting graph $\tilde\mG$ will in general not be a line graph. However, the new adjacency matrix $\mathcal A(\tilde\mG)$ will be a finite rank perturbation of the operator associated with the quadratic form for $\mG$, hence all results of this section will apply to $\mathcal A(\tilde\mG)$ as well.
\end{rem}

It is immediate to check that the symmetric form $a_C$ is elliptic in $\ell^2(\mV)$ and bounded both with domain $D^N(a_C)$ and $D^D(a_C)$. Hence, the following holds.

\begin{lemma}
The associated operators
\begin{equation}\label{eq:defin-adj-neum}
\begin{split}
D(\mathcal A^N_C)&:=\{u\in D^N(a_C):\exists v\in \ell^2(\mV) \hbox{ s.t. }(v|w)_{\ell^2(\mV)}=a(u,w)\hbox{ for all }w\in D(a_C)\}\\
\mathcal A^N_C u&:=v
\end{split}
\end{equation}
and
\begin{equation}\label{eq:defin-adj-dir}
\begin{split}
D(\mathcal A^D_C)&:=\{u\in D^D(a_C):\exists v\in \ell^2(\mV) \hbox{ s.t. }(v|w)_{\ell^2(\mV)}=a(u,w)\hbox{ for all }w\in D_0(a_C)\}\\
\mathcal A^D_C u&:=v
\end{split}
\end{equation}
are self-adjoint and both $-\mathcal A^N-2\Id , -\mathcal A^D-2\Id $ are dissipative.
\end{lemma}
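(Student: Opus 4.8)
The plan is to invoke the standard correspondence between symmetric, densely defined, bounded, elliptic sesquilinear forms and their associated self-adjoint operators. First I would record that, by the preceding lemma, both form domains $D^N(a_C)=V$ and $D^D(a_C)=\overline{c_{00}(\mV)}^{\|\cdot\|_V}$ are dense in $\ell^2(\mV)$ and are complete with respect to $\|\cdot\|_V$; together with the ellipticity and boundedness of $a_C$ just noted, this means $a_C$ is a closed form on each domain. The first representation theorem for forms (e.g.\ Kato's theorem, or the version in~\cite[Thm.~1.52 and Chapt.~6]{Ouh05}) then guarantees that the operators $\mathcal A^N_C$ and $\mathcal A^D_C$ defined in~\eqref{eq:defin-adj-neum} and~\eqref{eq:defin-adj-dir} are well-defined and self-adjoint, since $a_C$ is symmetric and $c$ is real-valued. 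This takes care of the self-adjointness claim with essentially no computation beyond verifying the hypotheses already assembled.

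The second half is the dissipativity of $-\mathcal A^N_C-2\Id$ and $-\mathcal A^D_C-2\Id$, which by the spectral characterization of self-adjoint operators reduces to showing that these operators are nonnegative, i.e.\ that $a_C(u)+2\|u\|_{\ell^2(\mV)}^2\ge 0$ for all $u$ in the respective form domain. I would compute directly using the definition of $\gamma$: since $\gamma(\mv)=c(\mv')+c(\mw')$ when $\mv\simeq\me'=(\mv',\mw')$, the potential term $\sum_{\mv\in\mV}\gamma(\mv)|u(\mv)|^2$ equals $\sum_{\mv'\in\mV'}c(\mv')\sum_{\me'\sim\mv'}|u(\me')|^2$, so that
\[
a_C(u)=\sum_{\mv'\in\mV'}c(\mv')\left(\left|\sum_{\me'\sim\mv'}u(\me')\right|^2-\sum_{\me'\sim\mv'}|u(\me')|^2\right).
\]
The key inequality is then that for the line graph of $\mH$ each vertex $\mv'$ contributes a bracketed term bounded below by $-2\deg_{\mH}(\mv')^{-1}$ times something controllable; more precisely, one has the pointwise bound $a_C(u)\ge-2\sum_{\mv\in\mV}c_{\max}|u(\mv)|^2$ coming from the fact that the smallest eigenvalue of the adjacency matrix of a line graph is at least $-2$, which in the weighted setting is encoded by the variational identity~\eqref{eq:variationmain}. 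This is the incidence-matrix decomposition alluded to in Lemma~\ref{lemma:basic-spectral-finite}.(3), lifted to the infinite, weighted case.

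The main obstacle, and the step deserving care, is justifying the lower bound $a_C(u)\ge-2\|u\|_{\ell^2(\mV)}^2$ in the \emph{infinite} and \emph{weighted} setting, where the finite-dimensional eigenvalue argument is not directly available. I would argue that the quantity $\sum_{\mv'}c(\mv')\big(|\sum_{\me'\sim\mv'}u(\me')|^2-\sum_{\me'\sim\mv'}|u(\me')|^2\big)+2\sum_{\mv}c(\text{appropriate})|u(\mv)|^2$ is a sum of manifestly nonnegative local contributions, reorganizing the global form into a sum over edges or over the incidence structure so that each summand is of the form $c(\mv')|\sum_{\me'\sim\mv'}u(\me')|^2\ge0$ plus nonnegative diagonal remainders. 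Concretely, the cleanest route is to write $a_C(u)+2\|\cdot\|^2$ in terms of $\mathcal J$ and the positive part $\mathcal I^+$ of the incidence matrix so that it becomes a genuine square $\sum_{\mv'}c(\mv')|\mathcal J u(\mv')|^2\ge0$ after the diagonal terms are absorbed, using $\gamma(\mv)+2c_{\min}\le$ the relevant degree-weighted coefficient. Once this nonnegativity is in place on $D^N(a_C)$, the Dirichlet-domain case $D^D(a_C)$ follows immediately by restriction, since $D^D(a_C)\subseteq D^N(a_C)$ and the form expression is identical. Dissipativity of the corresponding operators is then just the restatement that a nonnegative self-adjoint operator $B$ yields a dissipative $-B$.
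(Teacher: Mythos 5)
Your self-adjointness argument coincides with the paper's: the lemma carries no displayed proof there and is read off from the sentence immediately preceding it, namely that $a_C$ is symmetric, densely defined, elliptic and bounded on either domain -- hence closed, the domains being complete for the $V$-norm by the preceding lemma -- so that the form representation theorem applies, exactly as already spelled out in the proof of Proposition~\ref{prop:mohar-reloaded}.(1). Your reduction of dissipativity of $-\mathcal A^*_C-2\Id$ to the form inequality $a_C(u)+2\|u\|^2_{\ell^2(\mV)}\ge 0$, and your rewriting $\sum_{\mv\in\mV}\gamma(\mv)|u(\mv)|^2=\sum_{\mv'\in\mV'}c(\mv')\sum_{\me'\sim\mv'}|u(\me')|^2$, are also correct.

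The gap is in the step you yourself flag as delicate. The proposed ``absorption of the diagonal terms into a genuine square, using $\gamma(\mv)+2c_{\min}\le$ the relevant degree-weighted coefficient'' is not an argument, and none can be supplied, because under the section's standing assumption $\Gamma\ge c(\mv')\ge \gamma>0$ the target inequality $a_C(u)\ge -2\|u\|^2_{\ell^2(\mV)}$ is false in general: take $c\equiv\Gamma>1$ and $\mH$ containing a cycle of even length; the function $u\in c_{00}(\mV)\subset D^D(a_C)\subset D^N(a_C)$ alternating $\pm 1$ along the cycle's edges and vanishing elsewhere satisfies $\mathcal Ju=0$, whence $a_C(u)=-\sum_{\mv\in\mV}\gamma(\mv)|u(\mv)|^2=-2\Gamma\|u\|^2_{\ell^2(\mV)}<-2\|u\|^2_{\ell^2(\mV)}$. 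The ``least eigenvalue $\ge -2$'' fact you invoke is tied to the \emph{unweighted} identity~\eqref{eq:variationmain} and does not survive rescaling the weights (your own intermediate estimate $a_C(u)\ge -2c_{\max}\|u\|^2$ already concedes this). The bound that is actually available -- and all that the paper uses -- is the one-line estimate obtained by discarding the nonnegative $\mathcal J$-term: $a_C(u)\ge -\sum_{\mv\in\mV}\gamma(\mv)|u(\mv)|^2\ge -\|\gamma\|_\infty\|u\|^2_{\ell^2(\mV)}$, the same estimate behind the lower spectral bound $-\|\gamma\|_\infty$ in Proposition~\ref{prop:mohar-reloaded}.(3). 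This yields dissipativity of $-\mathcal A^*_C-\|\gamma\|_\infty\Id$ for $*=D,N$, which reproduces the stated constant $2$ precisely when $\|\gamma\|_\infty\le 2$ (in particular in the unweighted case $c\equiv 1$, where $\gamma\equiv 2$); for general weights the lemma's $2\Id$ must be read as $\|\gamma\|_\infty\Id$. With this replacement your remaining steps -- restriction to the Dirichlet domain, and the observation that a nonnegative self-adjoint $B$ makes $-B$ dissipative -- go through unchanged.
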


We are unable to provide combinatorial conditions under which $\mathcal A^N$ and the Friedrichs extension $\mathcal A^D$ agree, but it has been recently observed in~\cite{BalGolJer15} that line graphs of certain growing \emph{bipartite} graphs -- including so-called \textit{anti-trees} (but not trees!) -- do have essentially self-adjoint adjacency matrices.

Observe that in the case of graphs that are not uniformly locally finite the operator $\mathcal A$ is \textit{defined variationally} by means of the quadratic form $a$, whereas~\eqref{Ac00} is merely a property of $\mathcal A_C$. 

Thus, the general theory of operator semigroups associated with quadratic forms (see \cite[Chapter~6]{Mug14} for a compact overview) yields the following.

\begin{theo}
The operator $-\mathcal A_C^D$ (resp., $-\mathcal A_C^N$) associated with $a_C$ with domain $D_0(a_C)$ (resp., $D(a_C)$) generates a quasi-contractive, analytic semigroup of angle $\frac{\pi}{2}$  as well as a cosine operator function on $\ell^2(\mV)$. Accordingly, the Cauchy problems for both the backward parabolic equation
\begin{equation}\label{eq:mainpde-aheat}
\begin{cases}
\frac{du}{dt}(t,\mv)&=\mathcal A^*_Cu(t,\mv)\ ,\qquad t\le 0,\ \mv\in\mV\ , \\
u(0,\mv)&=u_0(\mv),\qquad \mv\in \mV\ ,
\end{cases}
\end{equation}
(for $u_0\in \ell^2(\mV)$)
and the hyperbolic equation
\begin{equation}\label{eq:mainpde-awave}
\begin{cases}
\frac{d^2u}{dt^2}(t)&=\mathcal A^*_Cu(t)\ ,\qquad t\in \mathbb R\ ,\\
u(0,\mv)&=u_0(\mv),\qquad \mv\in \mV\ ,\\
\frac{du}{dt}(0,\mv)&=u_1(\mv),\qquad \mv\in \mV\ ,
\end{cases}
 \end{equation}
(for $u_0\in \ell^2(\mV)$ and $u_1\in D^*(a_C)$)
are well-posed for both $*=D$ and $*=N$.
\end{theo}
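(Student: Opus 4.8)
The plan is to reduce the theorem to a standard application of quadratic-form generation theory, so the real work is to verify the hypotheses that earlier lemmas have already essentially delivered. First I would invoke the preceding lemma, which states that both $\mathcal A_C^N$ and $\mathcal A_C^D$ are self-adjoint and that $-\mathcal A_C^N-2\Id$ and $-\mathcal A_C^D-2\Id$ are dissipative. Self-adjointness means the forms $a_C$ with domains $D^N(a_C)=V$ and $D^D(a_C)=\overline{c_{00}(\mV)}^{\|\cdot\|_V}$ are closed, densely defined, and symmetric; dissipativity of $-\mathcal A_C^*-2\Id$ says precisely that the numerical range of $-\mathcal A_C^*$ lies in $[2,\infty)$, i.e. $a_C(u)\ge -2\|u\|_{\ell^2(\mV)}^2$ for all $u$ in the relevant domain. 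This lower semiboundedness is exactly the ellipticity already asserted (``$a_C$ is elliptic in $\ell^2(\mV)$ and bounded''), which itself follows from the variational characterization in Lemma~\ref{lemma:basic-spectral-finite} that all eigenvalues of $\mathcal A$ on a line graph are $\ge -2$.

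Next I would feed these two facts into the general theory of semigroups and cosine functions generated by symmetric, sectorial (indeed self-adjoint, hence of angle $0$) forms; a convenient reference internal to the paper is \cite[Chapter~6]{Mug14}. For the parabolic assertion: a self-adjoint operator bounded below generates an analytic semigroup of angle $\frac{\pi}{2}$, because the spectrum lies on a half-line of the real axis and the resolvent estimate holds on every sector of half-angle $<\frac{\pi}{2}$. Quasi-contractivity follows from the lower bound $-2$: the rescaled semigroup $(e^{t(\mathcal A_C^*+2)})_{t\le 0}$ is contractive since $-(\mathcal A_C^*+2)\ge 0$. For the hyperbolic assertion, I would use the standard result that a self-adjoint operator which is bounded above (equivalently, $-\mathcal A_C^*$ bounded below) generates a cosine operator function on $\ell^2(\mV)$, with the natural phase space $\ell^2(\mV)\times V^*$ giving the stated regularity $u_0\in\ell^2(\mV)$, $u_1\in D^*(a_C)$. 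Both $*=D$ and $*=N$ are handled identically, since the only structural inputs are self-adjointness and the common lower bound $-2$, both of which hold for either choice of form domain.

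One point deserving care is the direction of time and the appearance of the adjoint $\mathcal A_C^*$ in \eqref{eq:mainpde-aheat} and \eqref{eq:mainpde-awave}: since $\mathcal A_C^*$ is self-adjoint we have $\mathcal A_C^*=\mathcal A_C^{D}$ or $\mathcal A_C^{N}$, so $-\mathcal A_C^*$ is the generator and the \emph{backward} problem for $\mathcal A_C^*$ is the \emph{forward} problem for $-\mathcal A_C^*$; the substitution $t\mapsto -t$ converts \eqref{eq:mainpde-aheat} into the well-posed forward equation $\frac{du}{dt}=-\mathcal A_C^*u$ for $t\ge 0$, which is exactly what the analytic semigroup governs. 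The wave equation \eqref{eq:mainpde-awave} is already time-symmetric, so no such reindexing is needed there.

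The main obstacle, and the only non-formal step, is confirming the uniform lower bound $a_C(u)\ge -2\|u\|^2$ over the full form domain $V$ (and its subspace $D^D(a_C)$) in the genuinely infinite-dimensional setting, since the finite-graph inequality of Lemma~\ref{lemma:basic-spectral-finite} must be transferred to infinite $\mG$. I would argue this by a finite-exhaustion/density argument: for $u\in c_{00}(\mV)$ the quantity $a_C(u)$ only involves finitely many vertices and edges, so it coincides with the corresponding finite-graph form on the induced subgraph, whence $a_C(u)\ge -2\|u\|^2$ by the finite result; by boundedness and ellipticity of $a_C$, both sides are continuous in the $\|\cdot\|_V$-norm, and since $c_{00}(\mV)$ is dense in $D^D(a_C)$ by definition and (by the first lemma of this section) in $V$ as well, the inequality extends to all of $V$. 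With the lower bound secured, everything else is the invocation of \cite[Chapter~6]{Mug14}.
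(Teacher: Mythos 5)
Your overall route is the same as the paper's: the paper proves this theorem by simply observing that the symmetric form $a_C$ is densely defined, bounded and $\ell^2(\mV)$-elliptic on both form domains (this was already checked in Proposition~\ref{prop:mohar-reloaded} and recorded in the lemma preceding the theorem), and then citing the general theory of semigroups and cosine operator functions associated with closed semibounded forms, \cite[Chapter~6]{Mug14}. Your reduction to that theory, the rescaling argument for quasi-contractivity, the angle-$\frac{\pi}{2}$ assertion for self-adjoint semibounded generators, the cosine-family statement, and the remark converting the backward problem into a forward one for $-\mathcal A_C^*$ all match. However, your handling of what you call ``the only non-formal step'' contains a genuine error.

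You claim that $c_{00}(\mV)$ is dense in $V$ with respect to $\|\cdot\|_V$ ``by the first lemma of this section,'' and you use this to extend the lower bound from $c_{00}(\mV)$ to all of $V=D^N(a_C)$. That lemma asserts only that $V$ is dense in $\ell^2(\mV)$ \emph{because it contains} $c_{00}(\mV)$; it says nothing about density in the $V$-norm. Indeed, if $c_{00}(\mV)$ were $\|\cdot\|_V$-dense in $V$, then by definition $D^D(a_C)=D^N(a_C)$ and $\mathcal A^D_C=\mathcal A^N_C$ would hold for \emph{every} line graph, contradicting the paper's remark (citing \cite[Cor.~5.3]{BalGolJer15}) that adjacency matrices of line graphs can fail to be essentially self-adjoint, and making the paper's careful distinction between the two realizations vacuous. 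As written, your exhaustion-plus-density argument therefore establishes semiboundedness only on $D^D(a_C)$, leaving the case $*=N$ unproven. The repair is much simpler than your finite-exhaustion detour and is what the paper does: for \emph{every} $u\in V$ the first sum in $a_C(u)$ is nonnegative, and $\gamma\in\ell^\infty(\mV)$ by the standing assumption $\Gamma\ge c\ge\gamma>0$, so $a_C(u)\ge-\|\gamma\|_\infty\|u\|^2_{\ell^2(\mV)}$ holds pointwise from the definition of the form, with no finite-graph input and no density argument at all. Relatedly, your assertion that the ellipticity ``follows from the variational characterization in Lemma~\ref{lemma:basic-spectral-finite}'' inverts the logic: the eigenvalue bound $\ge-2$ for finite line graphs is itself a consequence of the factorization $\mathcal A=\mathcal J^T\mathcal J-2\,\Id_\mV$ in~\eqref{eq:variationmain}, not the other way around; and in the weighted setting the correct quasi-contractivity constant is $\|\gamma\|_\infty$ (bounded by $2\Gamma$) rather than $2$.
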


\begin{cor}
Given a sequence $(m_\mv)_{\mv\in \mV}$ and considering the potential
\[
(Vu)_\mv:=m_\mv u_\mv,\qquad \mv\in \mV,
\]
the backward parabolic equation
\begin{equation}\label{eq:mainpde-laplheat}
\frac{du}{dt}(t,\mv)=-\mathcal Lu(t,\mv)+Vu(t,\mv)\ ,\qquad t\le 0\ , \mv\in \mV\ ,
\end{equation}
is well-posed, provided there is $M>0$ such that 
\begin{equation}\label{eq:cond-bddperturb}
|m_\mv-\deg(\mv)|\le M\quad \hbox{for all }\mv\in \mV.
\end{equation}
\end{cor}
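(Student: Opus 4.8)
The plan is to recognize equation~\eqref{eq:mainpde-laplheat} as a special case of the backward parabolic equation~\eqref{eq:mainpde-aheat} for a suitable weighted adjacency matrix, and then invoke the preceding Theorem together with a bounded perturbation argument. Recall that $-\mathcal L=\mathcal A-\mathcal D$, where $\mathcal D$ is the diagonal matrix of vertex degrees; hence the operator on the right-hand side of~\eqref{eq:mainpde-laplheat} can be rewritten as
\[
-\mathcal L+V=\mathcal A-\mathcal D+V=\mathcal A+(V-\mathcal D),
\]
and by~\eqref{eq:cond-bddperturb} the diagonal operator $V-\mathcal D$ has entries $m_\mv-\deg(\mv)$ bounded in modulus by $M$, so it defines a bounded, self-adjoint operator on $\ell^2(\mV)$ of operator norm at most $M$.

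First I would fix the (unweighted, i.e.\ $c\equiv 1$) line graph setting so that the preceding Theorem applies, yielding that $\mathcal A^*_C$ generates a quasi-contractive analytic semigroup on $\ell^2(\mV)$ for $*=D$ or $*=N$; equivalently, $-\mathcal A^*_C$ does, and the backward problem~\eqref{eq:mainpde-aheat} is well-posed. The key algebraic step is then to identify $-\mathcal L+V$ with $\mathcal A_C+B$, where $B:=V-\mathcal D$ is the bounded diagonal perturbation isolated above. Since generators of analytic (equivalently, of $C_0$-) semigroups are stable under bounded additive perturbations --- a standard fact, e.g.\ via the Phillips perturbation series or~\cite[\S~III.1]{EngNag00} --- the operator $\mathcal A_C+B$ again generates an analytic semigroup, and the quasi-contractivity bound merely shifts by $M$. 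This gives well-posedness of the backward equation governed by $\mathcal A_C+B=-\mathcal L+V$, which is precisely~\eqref{eq:mainpde-laplheat}.

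The one genuine subtlety --- and the step I would expect to be the main obstacle --- is that $\mathcal D$ is in general \emph{unbounded} (the graph is only locally finite, not uniformly so), and likewise $V$ need not be bounded; it is only the \emph{difference} $V-\mathcal D$ that assumption~\eqref{eq:cond-bddperturb} controls. So one cannot perturb away from $\mathcal A$ by the unbounded $\mathcal D$ and then add back the unbounded $V$ separately: both manipulations would leave the class of generators. The correct bookkeeping is to keep $V-\mathcal D$ together as a single bounded operator and never split it, so that the perturbation lemma is applied exactly once to the bounded operator $B$. I would state this explicitly to preempt the temptation to treat $V$ and $-\mathcal D$ individually.

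Finally I would note that one should verify the domain identity $D(\mathcal A_C+B)=D(\mathcal A_C)$, which is immediate because $B$ is bounded and hence $\mathcal A_C$-bounded with relative bound $0$; this ensures the perturbed operator is again self-adjoint (being a bounded self-adjoint perturbation of a self-adjoint operator) and that the cosine-function / hyperbolic well-posedness of the preceding Theorem transfers as well, should one wish to record it. The whole argument is insensitive to the choice $*=D$ versus $*=N$, so the conclusion holds for either realization.
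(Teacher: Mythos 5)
Your proposal is correct and follows essentially the same route as the paper, whose one-line proof likewise rewrites $-\mathcal L+V=\mathcal A+(V-\mathcal D)$, notes that \eqref{eq:cond-bddperturb} makes $V-\mathcal D$ a bounded (diagonal, self-adjoint) operator of norm at most $M$, and then applies classical bounded-perturbation generation theory to the generator supplied by the preceding Theorem; your additional remarks on not splitting the unbounded operators $V$ and $\mathcal D$ individually, and on domain invariance under the bounded perturbation, merely make explicit what the paper leaves implicit. One small wording slip worth fixing: since $\mathcal A^*_C$ is in general only bounded \emph{below}, the Theorem asserts generation by $-\mathcal A^*_C$, and this is \emph{not} equivalent to generation by $\mathcal A^*_C$ itself --- though this does not affect your argument, which in the end correctly perturbs the backward generator.
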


\begin{proof}
Since under~\eqref{eq:cond-bddperturb} $m-\deg\in \ell^\infty(\mV)$, one sees that the Schrödinger operator $-\mathcal L+V$ is a bounded perturbation of $-\mathcal L+\mathcal D=\mathcal A$, hence classical generation results for perturbed operators apply.
\end{proof}

\begin{prop}
The backward semigroup $(e^{t\mathcal A^*})_{t\le 0}$ is not positive for either $*=D$ or $*=N$ if $\mG$ has at least two vertices, or equivalently if $\mH$ has at least two edges.
\end{prop}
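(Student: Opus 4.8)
The plan is to show that the backward semigroup $(e^{t\mathcal A^*})_{t\le 0}$, equivalently the semigroup $(e^{s\mathcal B})_{s\ge 0}$ generated by $\mathcal B:=-\mathcal A^*$, fails to be positive. The cleanest route is to argue at the level of the generator: a $C_0$-semigroup on $\ell^2(\mV)$ generated by a self-adjoint, bounded-below operator is positive if and only if its generator is \emph{positivity preserving} in the resolvent sense, which for operators associated with a symmetric form is governed by the first Beurling--Deny criterion. Recall that by the Beurling--Deny theory (see~\cite[Chapter~6]{Mug14}), the semigroup $(e^{-t\mathcal A^*})_{t\ge 0}$ is positive if and only if the form $a_C$ is a \emph{Dirichlet-type} form, i.e.\ $u\in D^*(a_C)$ implies $|u|\in D^*(a_C)$ and $a_C(|u|)\le a_C(u)$. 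We want the \emph{opposite} statement for the backward direction, so I would instead work directly with the off-diagonal structure of $\mathcal A^*$.

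First I would observe that, as recorded in~\eqref{Ac00}, on finitely supported functions $\mathcal A_C$ acts as a matrix with \emph{positive} off-diagonal entries $c(\mv')>0$ (whenever $\mv,\mw$ are adjacent in $\mG$), since the weights satisfy $c(\mv')\ge \gamma>0$. Consequently $-\mathcal A_C$ has \emph{negative} off-diagonal entries. For the generator $\mathcal B=-\mathcal A^*$ to yield a positive semigroup, the standard characterization (again first Beurling--Deny, or equivalently positivity of the resolvent) requires the off-diagonal entries of the generator to be nonnegative; here they are strictly negative on any edge of $\mG$. The key step is therefore to exhibit a concrete initial datum whose evolution leaves positivity. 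Since $\mG$ has at least two vertices $\mv,\mw$ that are adjacent (this is exactly the statement that $\mH$ has at least two edges sharing a common endpoint, so that $\mv\simeq\me'$ and $\mw\simeq\me''$ are adjacent in the line graph), I would take $u_0=\delta_\mv$, the indicator of a single vertex, which lies in $c_{00}(\mV)\subset D^*(a_C)$ for both $*=D,N$. A short-time expansion gives
\[
e^{s\mathcal B}\delta_\mv=\delta_\mv+s\,\mathcal B\delta_\mv+o(s)=\delta_\mv-s\,\mathcal A^*\delta_\mv+o(s),\qquad s\to 0^+,
\]
and evaluating at the adjacent vertex $\mw$ yields $(e^{s\mathcal B}\delta_\mv)(\mw)=-s\,c(\mv')+o(s)<0$ for $s>0$ small, since $(\mathcal A^*\delta_\mv)(\mw)=c(\mv')>0$. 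This shows the backward semigroup maps the nonnegative function $\delta_\mv$ to a function taking a strictly negative value, so it is not positive.

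The main obstacle is justifying the short-time expansion in the unbounded setting: when $\mG$ is not uniformly locally finite, $\mathcal A^*$ is genuinely unbounded and $e^{s\mathcal B}$ is not given by a norm-convergent exponential series, so I cannot simply differentiate term by term. I would handle this by noting that $\delta_\mv\in D(\mathcal A^*)$ — indeed $\delta_\mv\in c_{00}(\mV)$ and $\mathcal A^*\delta_\mv$ is the finitely supported function computed from~\eqref{Ac00}, which lies in $\ell^2(\mV)$ — so that $s\mapsto e^{s\mathcal B}\delta_\mv$ is differentiable at $s=0$ with derivative $\mathcal B\delta_\mv=-\mathcal A^*\delta_\mv$. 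Evaluating the $\ell^2$-valued derivative pointwise at the fixed vertex $\mw$ (pointwise evaluation is a bounded functional on $\ell^2(\mV)$) then legitimizes the computation above and produces the strictly negative value, independently of whether $*=D$ or $*=N$, since $\delta_\mv$ and $\mathcal A^*\delta_\mv$ agree for both realizations. This establishes non-positivity and completes the proof.
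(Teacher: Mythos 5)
Your proof is correct, but it takes a genuinely different route from the paper's. The paper argues entirely at the form level: it invokes the characterization of positivity for semigroups associated with quadratic forms (\cite[Thm.~2.7]{Ouh05}, i.e.\ the first Beurling--Deny criterion), discards the bounded diagonal term $\sum_{\mv}\gamma(\mv)|u(\mv)|^2$ as a perturbation irrelevant to positivity, and then tests the reduced form $\tilde a(u)=\sum_{\mv'}c(\mv')|\mathcal Ju(\mv')|^2$ against the sign-changing function $u=\delta_{\me_0'}-\delta_{\me_1'}$ for two edges $\me_0',\me_1'$ meeting at a vertex $\mv_0'$ of degree at least $2$ -- exactly the same combinatorial configuration you use, but exploited through the failure of the Beurling--Deny inequality rather than through the semigroup itself. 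Your argument instead works at the operator level: you verify that $\delta_\mv\in D(\mathcal A^*)$ with $\mathcal A^*\delta_\mv$ given by~\eqref{Ac00} (this verification is legitimate and matches the computation in the proof of Proposition~\ref{prop:mohar-reloaded}: the diagonal term $\gamma(\mv)=c(\mv')+c(\mw')$ cancels exactly, and local finiteness of $\mH$ makes all sums finite, for both $*=D$ and $*=N$), and then use the standard differentiability of $s\mapsto e^{s\mathcal B}u_0$ at $u_0\in D(\mathcal B)$ together with boundedness of point evaluations on $\ell^2(\mV)$ to get $(e^{s\mathcal B}\delta_\mv)(\mw)=-s\,c(\mv')+o(s)<0$. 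What the paper's route buys is that it never needs the operator domain or the action formula at all, which is why it is the standard technique for form-defined operators; what your route buys is slightly more information, namely a quantitative first-order failure of positivity at an adjacent vertex, and an argument that applies verbatim to any self-adjoint realization whose domain contains $c_{00}(\mV)$ with action~\eqref{Ac00}. Two small caveats in your preamble: the claim that positivity of the semigroup is equivalent to ``nonnegative off-diagonal entries of the generator'' is a statement about bounded matrices and is not available in the unbounded setting (fortunately you do not use it), and note that two vertices of $\mG$ need not be adjacent in general -- you need connectedness of $\mH$ (a standing assumption of this section) to guarantee a vertex $\mv'$ of degree at least $2$ and hence an edge of $\mG$.
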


\begin{proof}
We want to apply the characterization of positivity preserving semigroups associated with quadratic forms in~\cite[Thm.~2.7]{Ouh05} to the form $a_C$, but in fact up to a scalar perturbation (that does not affect positivity of the generated semigroup) we may equivalently study the quadratic form
\[
\tilde{a}(u)=\sum_{\mv'\in\mV'} c(\mv')|\mathcal J u(\mv')|^2=\sum_{\mv'\in \mV'}c(\mv')\left|\sum_{\me'\sim \mv'} u(\me')\right|^2,\qquad u\in D^*(a_C).
\]
Accordingly, we need to check that
\begin{equation}\label{eq:ouhab-equiv}
\exists u\in D^*(a_C) \quad \hbox{ s.t. }\quad u^+\not\in D^*(a_C) \hbox{ or } \tilde{a}(u^+)> \tilde{a}(u).
\end{equation}
Let us pick $\mv_0'\in \mV'$ with $\deg_\mH(\mv_0')\ge 2$ and take two edges $\me'_0,\me'_1\in \mE'$ that are incident in $\mv'_0$. Now,
\[
\left|\sum_{\me'\sim \mv'_0} u(\me')\right|< \left|\sum_{\me'\sim \mv'_0} u^+(\me')\right|
\]
for the function $u$ defined by
\[
u(\me'):=\begin{cases}
1 &\hbox{if }\me'=\me'_0\\
-1\quad &\hbox{if }\me'=\me'_1\\
0 &\hbox{otherwise}.
\end{cases}
\]
This yields~\eqref{eq:ouhab-equiv} and concludes the proof.
\end{proof}

\begin{rem}
Since $\mathcal A$ is a real symmetric matrix, we know from~\cite[Lemma~1.2.8]{Dav96} that $\mathcal A$ has at least one self-adjoint extension. Does it have \emph{exactly} one self-adjoint extension, i.e., is $\mathcal A$ essentially self-adjoint (in particular: do $\mathcal A^D$ and $\mathcal A^N	$ agree)? In order to answer this question, Golénia has introduced in~\cite{Gol10} the condition that the degree function, although not necessarily bounded, has \textit{bounded variation}, i.e.,
\begin{equation}\label{eq:condgolenia}
\sup\limits_{\mv\in \mV}\max_{\mw\sim \mv}|\deg(\mv)-\deg(\mw)|<\infty\ :
\end{equation}
this is e.g.\ the case for rooted radial trees each of whose vertices at distance $n$ from the root has at most $n+k$ offsprings, for a certain fixed $k\in \mathbb N$; or for their line graphs. 
It is proved in~\cite[Prop.~1.1]{Gol10} by means of Nelson's commutator theorem that unweighted graphs satisfying~\eqref{eq:condgolenia} have essentially self-adjoint adjacency matrices.

Stars are the simplest example of graph with degree function of high variation; but stars (on at least four vertices) cannot be line graphs in view of Beineke's Theorem, so one may wonder whether line graphs automatically satisfy Golénia's condition~\eqref{eq:condgolenia}. 
However, it has been shown in~\cite[Cor.~5.3]{BalGolJer15} that even adjacency matrices that are bounded from below, and in particular adjacency matrices of line graphs, can fail to be essentially self-adjoint.

Let $\tilde{A}$ be a self-adjoint extension of $\mathcal A_0:=\mathcal A_{|c_{00}(\mV)}$.
By the Lumer--Phillips Theorem (or simply by the Spectral Theorem), $\tilde{\mathcal A}$ (resp., $-\tilde{\mathcal A}$) generates a semigroup on $\ell^2(\mV)$ if and only if $\tilde{\mathcal A}$ is bounded from above (resp., from below). The following assertions have been shown by Golénia.
\begin{itemize}
\item Let the edge weight function be bounded from below away from 0; then any self-adjoint extension of $\mathcal A_0$ is bounded if and only if both the (unweighted) degree function and the edge weight function are bounded~\cite[Prop.~3.1]{Gol10}.
\item Let the edge weight function be unbounded; then any self-adjoint extension of $\mathcal A_0$ is unbounded both from above and from below~\cite[Prop.~3.2.(1)]{Gol10}.
\item Let the edge weight function be bounded from below away from 0; then boundedness from above of any self-adjoint extension of $\mathcal A_0$ implies boundedness of the (unweighted) degree function~\cite[Prop.~3.2.(2)]{Gol10}.
\end{itemize}

Generation of a semigroup by $\pm A$ seems to be a stronger requirement on $\mG$. Take for example $\mG$ to be the line graph of a radial tree with unbounded degree function: then the Cauchy problem for
\[
\frac{du}{dt}(t,\mv)=(\mathcal A-\mathcal D)u(t,\mv),\qquad t\ge 0\ ,\mv\in \mV\ ,
\]
is well-posed, but the Cauchy problem for
\[
\frac{du}{dt}(t,\mv)=\mathcal A u(t,\mv),\qquad t\ge 0\ ,\mv\in \mV\ ,
\]
is not. Indeed, on any unweighted graphs $\mG$ the Laplacian $-\mathcal L_{|c_{00}(\mV)}$ is a negative semidefinite and essentially self-adjoint operator (see e.g.~\cite[Example 1]{HaeKelLen12}), hence its closure generates a semigroup on $\ell^2(\mV)$. On the other hand, by~\cite[Prop.~3.1]{Gol10} every self-adjoint extension of $\mathcal A_{|c_{00}(\mV)}$ on a line graph is bounded from above if and only if the degree function is bounded.
\end{rem}

Our discussion of the long time asymptotics of $(e^{\pm t\mathcal A})_{t\ge 0}$ was based on the general properties of (finite) Hermitian matrices in the case of finite $\mG$, while we could discuss the limit of $(e^{ t\mathcal A})_{t\ge 0}$ based on Perron--Frobenius theory for generic uniformly locally finite graphs. Both approaches fail whenever $(e^{ t\mathcal A})_{t\ge 0}$ is studied, but we can still invoke compactness arguments.

In view of Pitt's theorem, a sufficient condition for the backward semigroup $(e^{t\mathcal A})_{t\le 0}$ to be compact is that it maps $\ell^2(\mV)$ into $\ell^p(\mV)$ for any $p\in [1,2)$. Unfortunately, checking this condition requires good knowledge of the semigroup kernel. However, it is known that a semigroup associated with a form is compact if and only if the form domain is compactly embedded in the ambient Hilbert space.

\begin{lemma}\label{lem:compact}
The embedding of $D^N(a_C)$ 
 into $\ell^2(\mV)$ is compact if for every $\epsilon>0$ there are $\mv\in \mV$ and $r>0$ such that
\begin{itemize}
\item $B(\mv,r)$ is finite and
\item there holds
\[
\sum_{\mw\not\in B(\mv,r)}|u(\mw)|^2<\epsilon^2
\]
for all $u\in \ell^2(\mV)$
 with $\|u\|^2_{\ell^2(\mV)}+\|\mathcal J u\|^2_{\ell^2(\mV')}\le 1$.
\end{itemize}
\end{lemma}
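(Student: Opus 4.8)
The plan is to run the discrete analogue of a Rellich--Kondrachov-type argument: I will show that the closed unit ball of $D^N(a_C)=V$ is sequentially precompact in $\ell^2(\mV)$, by combining weak compactness in the Hilbert space $V$ with the uniform tail decay supplied by the hypothesis. Recall that $V$ carries the norm $\|u\|_V^2=\|u\|_{\ell^2(\mV)}^2+\|\mathcal J u\|_{\ell^2(\mV')}^2$, so that bounded sets in $V$ are precisely the sets bounded in both $\ell^2(\mV)$ and under $\mathcal J$.

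First I would take an arbitrary sequence $(u_n)_{n\in\N}$ with $\|u_n\|_V\le 1$. Since $V$ is a Hilbert space, its unit ball is weakly compact, so after passing to a subsequence I may assume $u_n\rightharpoonup u$ weakly in $V$ for some $u$ in the closed unit ball. Because the embedding $V\hookrightarrow\ell^2(\mV)$ is continuous and evaluation at a fixed vertex is a bounded functional on $\ell^2(\mV)$, weak convergence in $V$ forces the pointwise convergence $u_n(\mw)\to u(\mw)$ for every $\mw\in\mV$. (Alternatively, one extracts a pointwise-convergent subsequence by a diagonal argument and uses local finiteness together with Fatou's lemma to check that the limit indeed lies in $V$ with $\|u\|_V\le 1$.)

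Next I would fix $\epsilon>0$ and invoke the hypothesis to obtain a \emph{finite} ball $B:=B(\mv,r)$ with $\sum_{\mw\notin B}|v(\mw)|^2<\epsilon^2$ for every $v$ with $\|v\|_V\le 1$. Splitting
\[
\|u_n-u\|_{\ell^2(\mV)}^2=\sum_{\mw\in B}|u_n(\mw)-u(\mw)|^2+\sum_{\mw\notin B}|u_n(\mw)-u(\mw)|^2,
\]
I treat the two terms separately. The first sum has only finitely many terms (as $B$ is finite), and since $u_n(\mw)\to u(\mw)$ for each such $\mw$, it tends to $0$ as $n\to\infty$. For the tail I note that $\|u_n-u\|_V\le 2$, so $(u_n-u)/2$ lies in the unit ball of $V$, and the hypothesis gives $\sum_{\mw\notin B}|u_n(\mw)-u(\mw)|^2<4\epsilon^2$. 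Therefore $\limsup_{n\to\infty}\|u_n-u\|_{\ell^2(\mV)}^2\le 4\epsilon^2$, and letting $\epsilon\to 0$ yields $u_n\to u$ strongly in $\ell^2(\mV)$, which is exactly compactness of the embedding.

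The argument is essentially routine once the template is in place, so there is no serious obstacle; the two points that require a moment's care are identifying the weak limit $u$ as an element of $V$ (handled by weak lower semicontinuity of $\|\cdot\|_V$, or by Fatou), and ensuring that the \emph{same} finite set $B$ controls the tails of all the differences $u_n-u$ uniformly in $n$. The latter is precisely what the tightness hypothesis guarantees: since the tail bound is homogeneous in $v$, it transfers from the unit ball to the set of differences at the harmless cost of the factor $4$.
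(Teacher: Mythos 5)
Your proof is correct: the weak-compactness step, the pointwise convergence via boundedness of the evaluation functionals on $V$, and the factor-$4$ transfer of the tail bound from the unit ball to the differences $(u_n-u)/2$ are all sound, and showing that every $V$-bounded sequence has an $\ell^2(\mV)$-convergent subsequence is exactly compactness of the embedding. However, you take a genuinely different route from the paper. The paper's proof is a one-line appeal to an abstract total-boundedness criterion of Hanche-Olsen and Holden (Thm.~4 of their Kolmogorov--Riesz paper), following the scheme of Prop.~3.8 of \cite{Mug14}: one maps the unit ball of $V$ into the finite-dimensional space of functions on $B(\mv,r)$ by restriction; the image is bounded, hence totally bounded, and the tail hypothesis says that two elements whose restrictions are close are close in $\ell^2(\mV)$, whence the unit ball itself is totally bounded in $\ell^2(\mV)$. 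Your argument replaces this abstract criterion with weak sequential compactness of the Hilbert unit ball (legitimate, since $V$ is a Hilbert space by the preceding lemma) plus uniform tails; it is self-contained and avoids any citation, at the modest cost of the subsequence extraction and the observation that the tightness bound is homogeneous. The paper's route buys brevity and yields total boundedness directly (a formulation that does not presuppose reflexivity and works for maps into general metric spaces); yours buys transparency and makes visible exactly where the two hypotheses -- finiteness of $B(\mv,r)$ and uniformity of the tail estimate over the unit ball -- enter. Both proofs hinge on the same point, namely that the ball $B(\mv,r)$ chosen for a given $\epsilon$ is the \emph{same} for all elements of the unit ball of $V$.
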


Here we have denoted by $B(\mv,r)$ ball of radius $r$ centered at $\mv$, i.e., the set of all vertices at distance at most $r$ from $\mv$.
Observe that compactness of this embedding implies in particular that the graph is not uniformly locally finite.

\begin{proof}
Compactness of the embedding into $\ell^2(\mV)$ under the above assumptions follows in a way similar to~\cite[Prop.~3.8]{Mug14}
by applying a result due to Hanche-Olsen and Holden, cf.~\cite[Thm.~4]{HanHol10}.
\end{proof}

Under the assumptions of Lemma~\ref{lem:compact} we deduce that $\mathcal A$ has pure point spectrum and the semigroup $(e^{-t\mathcal A})_{t\ge 0})$ is compact, hence by the Arendt--Batty--Lyubich--V\~{u} Theorem
\[
\lim_{t\to\infty}e^{t(\lambda_{\min}+\epsilon)}e^{-t\mathcal A}u=0\quad \hbox{for all }u\in \ell^2(\mV)
\]
for all $\epsilon>0$, cf.~\cite[Cor.~V.2.22]{EngNag00}, whereas $e^{t\lambda_{\min}}e^{-t\mathcal A}$ converges in operator norm towards the eigenprojector $P_{\min}$.
Still, in the infinite case it look less promising to apply these results due to lack of information on the spectrum of $\mathcal A$.

\section{An interpretation of the linear dynamical system associated with $\mathcal A$}

We have seen throughout this paper that both $(e^{t\mathcal A})_{t\le 0}$ and especially -- whenever it exists -- $(e^{t\mathcal A})_{t\ge 0}$ display nice analytic properties that resemble those of more common models of diffusion. In certain cases we have also been able to identify the equilibria towards which the backward and/or forward evolution equation is converging. However, it is not quite obvious what behavior is typical for solutions of~\eqref{eq:mainpde}, and hence what kind of evolutionary systems can be modeled by means of the adjacency matrix $\mathcal A$.

\begin{exa} In the case of small finite graphs, the exponential matrix generated by $\mathcal A$ can be computed explicitly.
\begin{itemize}
\item Let $\mG=\mP_2$, the graph consisting of two vertices and one edge only. Then, 
\[
e^{t\mathcal A(\mP_2)}= 
\begin{pmatrix} \cosh(t) & \sinh(t)\\
 \sinh(t) & \cosh(t)
 \end{pmatrix}\ ,
\]
For comparison, the group generated by (minus) the discrete Laplacian is given by the known formula
\[
e^{-t\mathcal L(\mP_2)}= \frac{1}{2}
\begin{pmatrix} 1+e^{-2t} & 1-e^{-2t}\\
1-e^{-2t} & 1+e^{-2t}
 \end{pmatrix}\ .
\]
This can also be seen from~\eqref{eq:explicit-regular}.
\item It is more interesting to consider a simple case that is not covered by~\eqref{eq:explicit-regular}.
If $\mG=\mP_3$, a path on three vertices, then
\[
e^{t\mathcal A(\mP_3)}= \frac{1}{4}
\begin{pmatrix} 2+\cosh(\sqrt{2} t) & \sqrt{2}\sinh(\sqrt{2}t) & -2+\cosh(\sqrt{2} t)\\
\sqrt{2}\sinh(\sqrt{2}t)& \cosh(\sqrt{2} t)& \sqrt{2}\sinh(\sqrt{2}t)\\
-2+\cosh(\sqrt{2} t) & \sqrt{2}\sinh(\sqrt{2}t)&2+\cosh(\sqrt{2} t)
 \end{pmatrix}\ ,
\]
For comparison, the group generated by (minus) the discrete Laplacian is given by 
\[
e^{-t\mathcal L(\mP_3)}= \frac{1}{3}
\begin{pmatrix} 
1+\frac{3e^{-t}+e^{-3t}}{2} & 1-e^{-3t} & 
1+\frac{e^{-3t}-3e^{-t}}{2}\\
1-e^{-3t}& 1+2e^{-3t}& 1-e^{-3t}\\
1+\frac{e^{-3t}-3e^{-t}}{2} &  1-e^{-3t} & 1+\frac{3e^{-t}+e^{-3t}}{2}
 \end{pmatrix}\ ,
\]

\item Let $\mG=\mC_4$, the cycle graph on $4$ edges. Then 
\begin{equation*}\label{eq:matrixexpoC4}
e^{t\mathcal A(\mC_4)}= \frac12
\begin{pmatrix}
1 +\cosh(2t) &
\sinh(2t) &1+\cosh(2t) & \sinh(2t)\\
\sinh(2t) & 1 +\cosh(2t) & \sinh(2t) & 1+\cosh(2t)\\
1+\cosh(2t) & \sinh(2t) & 1+\cosh(2t) & \sinh(2t)\\  
\sinh(2t) & 1+\cosh(2t) & \sinh(2t) & 1+\cosh(2t)
\end{pmatrix}\ .
\end{equation*}
\end{itemize}
\end{exa}

Let us mention the following immediate consequence of~\cite[Thm.~3.3]{Zag17}.

\begin{prop}
Let $H$ be a Hilbert space and $C$ be a self-adjoint and bounded linear operator on $H$. Then 
there exists $M>0$ such that
\[
\left\|\frac{C^n}{\|C\|^n}-\frac{e^{\frac{n}{\|C\|}C}}{e^n} \right\| \le \frac{M}{\sqrt[3]{n}}\qquad \hbox{for all }n\in \mathbb N\ .
\]
\end{prop}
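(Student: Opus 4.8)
The plan is to strip off the normalisation by a rescaling, then to convert the operator inequality into a scalar one by the functional calculus, leaving the genuine analytic estimate to the quoted theorem. First I would set $B:=\|C\|^{-1}C$, which is again bounded and self-adjoint with $\|B\|=1$. Since $\frac{n}{\|C\|}C=nB$, a one-line computation gives $\frac{C^n}{\|C\|^n}=B^n$ and $\frac{e^{\frac{n}{\|C\|}C}}{e^n}=e^{nB}e^{-n}=e^{n(B-\Id)}$, so that the assertion is \emph{equivalent} to
\[
\bigl\|B^n-e^{n(B-\Id)}\bigr\|\le \frac{M}{\sqrt[3]{n}}\qquad\hbox{for all }n\in\N\ .
\]
This rescaling is exactly what makes the statement an immediate consequence of \cite[Thm.~3.3]{Zag17}: the entire dependence on $\|C\|$ has been absorbed, and only a normalised self-adjoint contraction remains.

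Next I would exploit self-adjointness of $B$. Both $B^n$ and $e^{n(B-\Id)}$ are continuous functions of $B$ in the sense of the functional calculus, so the spectral theorem identifies the operator norm with a supremum over the spectrum:
\[
\bigl\|B^n-e^{n(B-\Id)}\bigr\|=\sup_{\lambda\in\sigma(B)}\bigl|\lambda^n-e^{n(\lambda-1)}\bigr|\ ,\qquad \sigma(B)\subseteq[-1,1]\ .
\]
This turns the problem into the purely scalar question of controlling $g_n(\lambda):=\lambda^n-e^{n(\lambda-1)}$ uniformly on $\sigma(B)$. The decisive region is near the extremal value $\lambda=1$: writing $\lambda^n=e^{n\log\lambda}$ with $\log\lambda=(\lambda-1)-\tfrac12(\lambda-1)^2+\cdots$, one sees that $\lambda^n$ and $e^{n(\lambda-1)}$ coincide to first order at $\lambda=1$ and separate only at second order, which is why one can hope for a subexponentially small bound rather than a trivial one.

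The genuine obstacle is precisely this scalar estimate, namely
\[
\sup_{\lambda\in\sigma(B)}\bigl|\lambda^n-e^{n(\lambda-1)}\bigr|\le \frac{M}{\sqrt[3]{n}}\ ,
\]
which is the content of \cite[Thm.~3.3]{Zag17}; I would invoke it directly, with $M$ the constant it supplies, rather than reprove it. A heuristic for why such a bound is available comes from the region $\lambda=1-s$ with $s\ge 0$ small, where $g_n(1-s)=e^{-ns}\bigl(e^{-n(s^2/2+\cdots)}-1\bigr)$ and the competition between the decaying prefactor $e^{-ns}$ and the second-order correction of size $ns^2$ keeps $|g_n|$ small. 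Turning this heuristic into the sharp quantitative bound that is uniform over the spectrum and carries the stated cube-root rate is exactly the quantitative work done in \cite{Zag17}, and it is that step — not the rescaling or the spectral reduction, both of which are routine — that I would regard as the heart of the matter.
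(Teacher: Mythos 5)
Your reduction is exactly the one the paper intends: the paper offers no argument beyond declaring the proposition an immediate consequence of \cite[Thm.~3.3]{Zag17}, and your normalization $B:=C/\|C\|$, which rewrites the claim as $\|B^n-e^{n(B-\Id)}\|\le Mn^{-1/3}$, together with the spectral-calculus identity $\|B^n-e^{n(B-\Id)}\|=\sup_{\lambda\in\sigma(B)}\bigl|\lambda^n-e^{n(\lambda-1)}\bigr|$, is precisely the routine part of that citation, carried out correctly.

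However, the scalar estimate you then attribute to \cite[Thm.~3.3]{Zag17} --- a bound $Mn^{-1/3}$ for the supremum over all of $\sigma(B)\subseteq[-1,1]$ --- is false, and no citation can supply it: at $\lambda=-1$ one has $\bigl|\lambda^n-e^{n(\lambda-1)}\bigr|=\bigl|(-1)^n-e^{-2n}\bigr|\ge 1-e^{-2n}$, so the supremum stays near $1$ whenever $\sigma(B)$ meets a neighbourhood of $-1$. Concretely, take $C=\mathcal A(\mP_2)$, so that $\|C\|=1$ and $C^2=\Id$: for even $n$ a direct computation with the paper's own formula for $e^{t\mathcal A(\mP_2)}$ gives
\[
\frac{C^n}{\|C\|^n}-\frac{e^{\frac{n}{\|C\|}C}}{e^n}=\Id-e^{n(C-\Id)}=\frac{1-e^{-2n}}{2}\,(\Id-C)\ ,
\]
whose norm is $1-e^{-2n}\to 1$. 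Your heuristic probes only the right endpoint, $\lambda=1-s$ with $s\ge 0$ small, which is exactly why the failure at the left endpoint went unnoticed. The $n^{-1/3}$ estimate in the Chernoff-lemma literature (Cachia--Zagrebnov, revisited in \cite{Zag17}) is established for \emph{quasi-sectorial} contractions, whose numerical range touches the unit circle only at $1$; a self-adjoint contraction meets this hypothesis only if its spectrum is confined to $[-a,1]$ for some $a<1$. So your step ``invoke it directly'' does not close without an additional hypothesis keeping $\sigma(C/\|C\|)$ uniformly away from $-1$ (e.g.\ $C\ge 0$, or $\min\sigma(C)\ge -a\|C\|$ with $a<1$, the constant $M$ then depending on $a$). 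To be fair, this lacuna is inherited from the statement itself --- as formulated, the proposition (and the subsequent corollary for $\pm\mathcal A$, e.g.\ on bipartite graphs, where $-\|\mathcal A\|\in\sigma(\mathcal A)$) is subject to the same counterexample --- but as written, the pivotal scalar claim in your argument is false and the proof does not go through.
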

 
In the case of $C=\mathcal A$, the adjacency matrix of a graph, this is interesting because the powers $\mathcal A^n$ have a well-known combinatorial interpretation: indeed, for each $n$ the $\mv\mw$-entry of $\mathcal A^n$ is precisely the number of walks of length $n$ from $\mv$ to $\mw$ (a walk being 	a sequence vertex-edge-vertex-$\ldots$-edge-vertex in which vertices can be repeated.)

\begin{cor}
Let $\mG$ be uniformly locally finite. Then there exists $M>0$ such that
\[
\left\|\frac{(\pm\mathcal A)^n}{\|\mathcal A\|^n}-\frac{e^{\pm\frac{n}{\|\mathcal A\|}\mathcal A}}{e^n} \right\| \le \frac{M}{\sqrt[3]{n}}\qquad \hbox{for all }n\in \mathbb N\ .
\]
\end{cor}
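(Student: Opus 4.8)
The plan is to derive this Corollary directly from the Proposition immediately preceding it, by instantiating the latter at the Hilbert space $H:=\ell^2(\mV)$ and the operator $C:=\pm\mathcal A$. First I would invoke the standing hypothesis: since $\mG$ is uniformly locally finite, Mohar's theorem (recorded at the beginning of Section~\ref{section:infinite}, in the weighted form of Lemma~\ref{lem:mohar}) guarantees that $\mathcal A$ defines a bounded linear operator on $\ell^2(\mV)$. As $\mathcal A$ is real and symmetric, it is moreover self-adjoint on $H$. This places us precisely in the framework required by the Proposition, whose hypotheses are exactly ``$C$ self-adjoint and bounded''.

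Applying the Proposition with $C=\mathcal A$ then produces a constant $M_+>0$ with
\[
\left\|\frac{\mathcal A^n}{\|\mathcal A\|^n}-\frac{e^{\frac{n}{\|\mathcal A\|}\mathcal A}}{e^n}\right\|\le \frac{M_+}{\sqrt[3]{n}}\qquad\hbox{for all }n\in\N,
\]
which is the $+$ case of the claim. For the $-$ case I would apply the same Proposition to $C=-\mathcal A$, which is again bounded and self-adjoint on $H$ and satisfies $\|-\mathcal A\|=\|\mathcal A\|$. Here the only bookkeeping is the observation that $(-\mathcal A)^n=(-1)^n\mathcal A^n$ and $e^{\frac{n}{\|-\mathcal A\|}(-\mathcal A)}=e^{-\frac{n}{\|\mathcal A\|}\mathcal A}$, so that the conclusion of the Proposition for $C=-\mathcal A$ reads verbatim as the $-$ case of the Corollary, with some constant $M_->0$.

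Finally I would set $M:=\max\{M_+,M_-\}$, so that both inequalities hold simultaneously with a single constant, yielding the stated bound uniformly in the sign. This merging step is legitimate because each application of the Proposition returns a constant depending only on the operator fed into it, and we are feeding in only the two operators $\mathcal A$ and $-\mathcal A$.

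There is no genuine analytic obstacle in this argument, as all the quantitative work is carried out by the cited Proposition (itself a consequence of~\cite[Thm.~3.3]{Zag17}). The two points that do require attention are, first, the use of uniform local finiteness to secure boundedness of $\mathcal A$ — this is exactly where the hypothesis on $\mG$ enters, and without it $\mathcal A$ need be neither bounded nor self-adjoint, so the Proposition would not apply — and, second, the elementary sign manipulation that transfers the estimate for $-\mathcal A$ into the $-$ branch and consolidates the two constants into one $M$.
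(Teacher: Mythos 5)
Your proposal is correct and is exactly the argument the paper intends (the Corollary is stated without proof as an immediate instantiation of the preceding Proposition at $C=\pm\mathcal A$, with boundedness supplied by Mohar's theorem via uniform local finiteness, self-adjointness from real symmetry plus boundedness, and $\|{-\mathcal A}\|=\|\mathcal A\|$ handling the sign). The only micro-point left tacit — that $\|\mathcal A\|>0$ so the division makes sense — is secured by the standing assumption $\mE\neq\emptyset$.
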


Additionally, we can describe the behavior of the dynamical system associated with $\mathcal A$ through its interplay with the dynamical systems associated with the discrete Laplacian $\mathcal L$ and the signless Laplacian $\mathcal Q$ as follows.

\begin{prop}
Let $\mG$ be uniformly locally finite. Then
\[
0\le e^{-t\|\deg_{\mG}^C\|_\infty } e^{t\mathcal A}f \le e^{-t\mathcal L} f\le e^{t\mathcal A}f\le e^{t\mathcal Q}f\le e^{t\|\deg_{\mG}^C\|_\infty } e^{t\mathcal A}f\qquad \hbox{for all }0\le f\in \ell^2(\mV),\ t\ge 0\ .
\]
\end{prop}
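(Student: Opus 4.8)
The plan is to reduce all five inequalities to a single domination principle for positive semigroups with bounded generators, thereby avoiding any appeal to commutativity of $\mathcal A$, $\mathcal D$ and $\mathcal L$ (which fails unless $\mG$ is regular). Writing $M:=\|\deg_{\mG}^C\|_\infty$ and $\mathcal D$ for the diagonal matrix of (weighted) degrees, I first observe that the five semigroups in the statement are generated by the chain
\[
\mathcal A-M\,\Id\ \le\ -\mathcal L\ \le\ \mathcal A\ \le\ \mathcal Q\ \le\ \mathcal A+M\,\Id,
\]
where the consecutive differences are
\[
(-\mathcal L)-(\mathcal A-M\,\Id)=M\,\Id-\mathcal D,\qquad \mathcal A-(-\mathcal L)=\mathcal D,\qquad \mathcal Q-\mathcal A=\mathcal D,\qquad (\mathcal A+M\,\Id)-\mathcal Q=M\,\Id-\mathcal D.
\]
Each of these is a diagonal matrix with nonnegative entries, since $0\le\deg_{\mG}^C(\mv)\le M$ for every $\mv\in\mV$; in particular each is a positivity preserving (indeed, a multiplication) operator.

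Next I would record that every semigroup appearing in the chain is positive. For $(e^{t\mathcal A})_{t\ge0}$ this is Proposition~\ref{prop:positforward}.(1), and the rescalings $e^{\mp tM}e^{t\mathcal A}$ inherit positivity from the scalar factor. The generator $\mathcal Q=\mathcal A+\mathcal D$ has nonnegative off-diagonal entries and is bounded, so $(e^{t\mathcal Q})_{t\ge0}$ is positive by the exponential series, exactly as for $\mathcal A$; and $(e^{-t\mathcal L})_{t\ge0}$ is the classical sub-Markovian heat semigroup, hence positive.

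The core step is the domination lemma: if $A$ and $B=A+P$ are bounded generators of positive semigroups and $P$ is positivity preserving, then $e^{tB}f\ge e^{tA}f$ for all $0\le f$ and $t\ge0$. Since $\mG$ is uniformly locally finite, all operators in sight are bounded, so the Dyson--Phillips series
\[
e^{tB}=\sum_{n=0}^\infty D_n(t),\qquad D_0(t)=e^{tA},\qquad D_n(t)=\int_0^t e^{(t-s)A}\,P\,D_{n-1}(s)\,ds,
\]
converges in operator norm, and every $D_n(t)$ is a composition and integral of positivity preserving operators, hence positivity preserving. Therefore $e^{tB}-e^{tA}=\sum_{n\ge1}D_n(t)\ge0$, which is the claimed domination (this is the bounded-generator case of~\cite[Thm.~2.24]{Ouh05}). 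Applying the lemma to each of the four links of the chain, with $P$ the corresponding nonnegative diagonal operator, yields the four inequalities; the leftmost bound $0\le e^{-tM}e^{t\mathcal A}f$ is then simply positivity of $(e^{t(\mathcal A-M\,\Id)})_{t\ge0}$.

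I expect the only delicate point to be resisting the temptation to factor $e^{t\mathcal A}=e^{t\mathcal D}e^{-t\mathcal L}$, which is false for non-regular graphs because $\mathcal A$ and $\mathcal D$ do not commute; the Dyson--Phillips (Duhamel) argument is precisely what replaces this identity and keeps the proof valid in the general, non-commuting setting.
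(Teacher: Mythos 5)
Your proof is correct, and it deliberately takes a different route from the paper's own argument --- in fact, a more robust one. The paper proves the two outer inequalities pointwise from $0\le \deg_{\mG}^C(\mv)\le\|\deg_{\mG}^C\|_\infty$ and then converts $e^{\pm t\mathcal D}e^{t\mathcal A}$ into $e^{t(\mathcal A+\mathcal D)}=e^{t\mathcal Q}$, resp.\ $e^{t(\mathcal A-\mathcal D)}=e^{-t\mathcal L}$, by asserting that the diagonal degree matrix $\mathcal D$ commutes with $\mathcal A$. As you anticipated, that commutation fails in general: $(\mathcal D\mathcal A)_{\mv\mw}=\deg(\mv)\,\mathcal A_{\mv\mw}$ while $(\mathcal A\mathcal D)_{\mv\mw}=\mathcal A_{\mv\mw}\deg(\mw)$, so $[\mathcal D,\mathcal A]=0$ forces adjacent vertices to have equal degree --- essentially regularity on each connected component, in which case the proposition is anyway immediate from~\eqref{eq:explicit-regular}. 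Your replacement --- the chain $\mathcal A-M\Id\le -\mathcal L\le \mathcal A\le \mathcal Q\le \mathcal A+M\Id$ with nonnegative diagonal gaps $M\Id-\mathcal D$ and $\mathcal D$, combined with the Dyson--Phillips domination lemma for bounded generators --- is sound at every step: the gaps are positivity preserving precisely because $0\le\deg_{\mG}^C\le M$; the base semigroups are positive (Proposition~\ref{prop:positforward} for $\mathcal A$, the exponential series for $\mathcal Q$ since $\mathcal Q=\mathcal A+\mathcal D$ has nonnegative entries, the Dirichlet-form heat semigroup for $-\mathcal L$); each $D_n(t)$ is positivity preserving as an integral of compositions of positivity preserving operators, and the norm-convergent series gives $e^{tB}-e^{tA}=\sum_{n\ge1}D_n(t)\ge 0$. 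This is exactly the bounded-generator case of~\cite[Thm.~2.24]{Ouh05}, which the paper itself invokes for Proposition~\ref{prop:domindortm}, so your key lemma is entirely in the spirit of the surrounding text. What your approach buys is validity for all uniformly locally finite (weighted) graphs, thereby closing a genuine gap in the published argument; what the paper's intended factorization $e^{t\mathcal A}=e^{t\mathcal D}e^{-t\mathcal L}$ would buy, where it applies, is an exact identity rather than a one-sided domination --- but that identity is available only in the degree-regular case.
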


\begin{proof}
Since $(e^{t\mathcal A})_{t\ge 0}$ is a positive semigroup, $e^{t\mathcal A}f(\mv)\ge 0$ for all $t\ge 0$ and all $\mv\in \mV$. Additionally, $\deg(\mv)>0$ for all $\mv\in\mV$, hence
 \[
e^{-t\|\deg_{\mG}^C\|_\infty } e^{t\mathcal A}f\le e^{-t\deg(\mv)}e^{t\mathcal A}f(\mv)\le e^{t\mathcal A}f(\mv)\le e^{t\deg (\mv)}e^{t\mathcal A}f(\mv)\le e^{t\|\deg_{\mG}^C\|_\infty } e^{t\mathcal A}f
\] for all $t\ge 0$ and all $\mv\in \mV$. Finally, observe that the diagonal matrix $\mathcal D=\diag(\deg(\mv))_{\mv\in \mV}$ commutes with $\mathcal A$, hence
\[
e^{t(\mathcal A\pm \mathcal D)}=e^{\pm t\mathcal D}e^{t\mathcal A},\qquad t\ge 0\ .
\]
This concludes the proof, since $\mathcal Q=\mathcal D+\mathcal A$ and $-\mathcal L=-\mathcal D+\mathcal A$ for uniformly locally finite graphs.
\end{proof}

We also remark that by~\cite[Thm.~C-II 4.17]{Nag86} $(e^{t\mathcal A})_{t\ge 0}$ is the modulus semigroup of $(e^{t\mathcal A})_{t\le 0}$, i.e., the domination property
\[
|e^{-t\mathcal A}f|\le e^{t\mathcal A}|f|\qquad \hbox{for all }f\in \ell^2(\mV),\ t\ge 0
\]
holds.

Finally, let us provide evidence that the forward problem for~\eqref{eq:mainpde} has, at least for short time, diffusive nature. 

\begin{prop}
Let $\mG$ be uniformly locally finite. Then
\[
\|e^{t\mathcal A}-e^{-t\mathcal L}\|\le t\|\deg_{\mG}^C\|_\infty e^{t\|\deg_{\mG}^C\|_\infty }\ \quad\hbox{for all }t\ge 0\ .
\]
\end{prop}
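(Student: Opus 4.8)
The plan is to treat $\mathcal A$ as a bounded perturbation of $-\mathcal L$ and to estimate the difference of the two semigroups by the variation-of-parameters (Duhamel) formula. Since $\mG$ is uniformly locally finite, all three operators $\mathcal A$, $\mathcal L$ and $\mathcal D=\diag(\deg_{\mG}^C(\mv))_{\mv\in\mV}$ are bounded on $\ell^2(\mV)$, and the defining relation $-\mathcal L=\mathcal A-\mathcal D$ yields $\mathcal A-(-\mathcal L)=\mathcal D$. Writing $B:=\mathcal A$ and $A:=-\mathcal L$, I would start from the elementary identity for bounded generators
\[
e^{tB}-e^{tA}=\int_0^t \frac{d}{ds}\left(e^{(t-s)A}e^{sB}\right)ds=\int_0^t e^{(t-s)A}(B-A)e^{sB}\,ds,
\]
which specializes here to
\[
e^{t\mathcal A}-e^{-t\mathcal L}=\int_0^t e^{-(t-s)\mathcal L}\,\mathcal D\,e^{s\mathcal A}\,ds\ ,\qquad t\ge 0\ .
\]

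Next I would pass to operator norms inside the integral and supply the two bounds that are already at hand. First, $\mathcal L$ is positive semidefinite (it is associated with a Dirichlet form, cf.\ the discussion in the proof of Proposition~\ref{prop:positforward}), so $(e^{-\tau\mathcal L})_{\tau\ge 0}$ is a contraction semigroup on $\ell^2(\mV)$ and $\|e^{-(t-s)\mathcal L}\|\le 1$. Second, by Proposition~\ref{prop:mohar-reloaded}.(2) one has $\|\mathcal A\|\le \|\deg_{\mG}^C\|_\infty=:\delta$, whence $\|e^{s\mathcal A}\|\le e^{s\delta}$; and $\|\mathcal D\|=\delta$ since $\mathcal D$ is the diagonal multiplication operator by $\deg_{\mG}^C$. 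Combining these,
\[
\|e^{t\mathcal A}-e^{-t\mathcal L}\|\le \int_0^t \|e^{-(t-s)\mathcal L}\|\,\|\mathcal D\|\,\|e^{s\mathcal A}\|\,ds\le \delta\int_0^t e^{s\delta}\,ds=e^{t\delta}-1\ .
\]

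Finally I would close the estimate with the elementary inequality $e^{x}-1\le x\,e^{x}$ for $x\ge 0$ (integrate $e^u\le e^x$ over $[0,x]$), applied with $x=t\delta$, to conclude $e^{t\delta}-1\le t\delta\,e^{t\delta}=t\|\deg_{\mG}^C\|_\infty e^{t\|\deg_{\mG}^C\|_\infty}$, exactly the claimed bound. I do not expect a serious obstacle: this is a routine perturbation estimate once the Duhamel identity is in place, and the only points demanding care are the sign bookkeeping in $\mathcal A-(-\mathcal L)=\mathcal D$ and the selection of the two uniform semigroup bounds. I note that one could instead shortcut the computation through the commutativity of $\mathcal D$ and $\mathcal A$ used in the preceding proposition, writing $e^{t\mathcal A}-e^{-t\mathcal L}=(e^{t\mathcal D}-\Id)e^{-t\mathcal L}$ and bounding $\|e^{t\mathcal D}-\Id\|=e^{t\delta}-1$; however, the Duhamel route requires no commutativity hypothesis and is the one I would present.
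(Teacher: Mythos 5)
Your proof is correct, but it takes a genuinely different route from the paper's. The paper's own argument is the one-line shortcut you mention at the end and deliberately set aside: it asserts that $\mathcal D$ and $\mathcal L$ commute, factorizes $e^{t\mathcal A}=e^{t\mathcal D}e^{-t\mathcal L}$, and estimates $\|e^{t\mathcal A}-e^{-t\mathcal L}\|=\|(e^{t\mathcal D}-\Id)e^{-t\mathcal L}\|\le\|e^{t\mathcal D}-\Id\|=e^{t\|\deg_{\mG}^C\|_\infty}-1$, leaving the final elementary step $e^x-1\le xe^x$ (which you carry out explicitly) implicit. Your Duhamel argument replaces this factorization by the integral identity $e^{t\mathcal A}-e^{-t\mathcal L}=\int_0^t e^{-(t-s)\mathcal L}\,\mathcal D\,e^{s\mathcal A}\,ds$ together with three uniform bounds, each of which is in order: $\|e^{-\tau\mathcal L}\|\le 1$ because $\mathcal L$ is positive semidefinite (Dirichlet form), $\|\mathcal A\|\le\|\deg_{\mG}^C\|_\infty$ by Lemma~\ref{lem:mohar}, and $\|\mathcal D\|=\|\deg_{\mG}^C\|_\infty$ for the diagonal operator. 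What your route buys is substantial: the commutativity invoked by the paper actually fails for general uniformly locally finite graphs, since $(\mathcal D\mathcal A-\mathcal A\mathcal D)_{\mv\mw}=\bigl(\deg_{\mG}^C(\mv)-\deg_{\mG}^C(\mw)\bigr)\mathcal A_{\mv\mw}$, which vanishes for all adjacent pairs $\mv\sim\mw$ only when the (weighted) degree is constant along edges, i.e.\ on components that are regular --- already the path $\mP_3$ is a counterexample, so the factorization $e^{t\mathcal A}=e^{t\mathcal D}e^{-t\mathcal L}$ is not available in the stated generality. Your perturbation argument needs no commutativity at all and therefore not only reproves the proposition but actually repairs this gap; the only price is one integral and the inequality $e^x-1\le x e^x$, and even the intermediate bound $e^{t\|\deg_{\mG}^C\|_\infty}-1$ you obtain coincides with (indeed is sharper than) the paper's.
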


\begin{proof}
 Then $\mathcal D,\mathcal L$ are two commuting bounded operators and therefore
\[
\|e^{t\mathcal A}-e^{-t\mathcal L}\|=\|(e^{t\mathcal D}-\Id)e^{-t\mathcal L}\|\le \|(e^{t\mathcal D}-\Id)\|=e^{t\|\deg_{\mG}^C\|_\infty }-1\ 
\]
for all $t\ge 0$.
\end{proof}

\section{Miscellaneous comments}

\subsection{Non-self-adjoint evolution}

Let
\begin{itemize}
\item  $c:\mV'\to \R$ with $\Gamma,\gamma\in \R$ such that $\Gamma\ge c(\mv')\ge \gamma>0$ for all $\mv'\in\mV'$;
\item $B\in \mathcal L(\ell^2(\mV'),\ell^2(\mV))$;
\item $p\in \ell^\infty(\mV)$
\end{itemize}
Then we can consider the sesquilinear form $\mathfrak{a}$ defined by
\[
\mathfrak{a}(u,v):=(c\mathcal J u|\mathcal Jv)_{\ell^2(\mV')}+(B\mathcal Ju|v)_{\ell^2(\mV)}+(p u|v)_{\ell^2(\mV)}
\]
for $u,v$ in either of the form domains $D^N(a_C)$, $D^D(a_C)$ or (if $\mG$ is uniformly locally finite) $\ell^2(\mV)$. It is easy to see that $(a_C-\mathfrak{a})$ is a small form perturbation in the sense of~\cite[Lemma~2.1]{Mug08}, hence $\mathfrak{a}$ is a non-symmetric, elliptic, bounded form associated with an operator that acts on function with finite support as
\[
u\mapsto \mathcal J^Tc\mathcal Ju+B\mathcal Ju+pu\ ;
\]
these operators in turn generate analytic, quasi-contractive semigroups on $\ell^2(\mV)$.

\subsection{Quasilinear evolution}

In analogy with the theory of discrete $p$-Laplacians, see~\cite{Mug13} and references therein, one may extend the theory presented in this paper and consider the operator defined as the Fréchet derivative (with respect to $\ell^2(\mV)$) of the energy functional
\[
\Ffun_p(u):=\frac{\|\mathcal Ju\|^p_{\ell^p(\mV')}}{p}-2^{p-1}\frac{\|u\|^2_{\ell^2(\mV)}}{2}\ .
\]
Observe that the operator $\mathcal A_p$ associated with functional acts on functions $u$ with finite support as
\[
\mathcal A_p u=\mathcal J^T\left( |\mathcal J u|^{p-2} \mathcal Ju\right)-2^{p-1}u\ .
\]
In the prototypical case of even cycles $\mG$, $\mathcal A_p$ has minimal and maximal eigenvalues $\pm 2^{p-1}$, with the corresponding eigenfunctions being the same as for $p=2$ -- viz. constant functions for $2^{p-1}$ and alternating functions whose $\pm1$ patterns follows the bipartition for $-2^{p-1}$. The abstract theory of gradient systems in Hilbert space~\cite{ChiFas10} then directly yields that $-\mathcal A_p$ generates a semigroup of nonlinear operators on $\ell^2(\mV)$.

\subsection{Generalized line graphs}

Let $\mH=(\mV',\mE')$ be a graph and consider a vector in $\N^{|\mV|}$. The graph $\tilde{\mH}_{n_{\mv'_1},\ldots,n_{\mv'_{|\mV'|}}}$ is the multigraph obtained by adding to each $\mv'\in \mV'$ $n_{\mv}$ petals -- a petal being a pair of parallel edges incident in both $\mv$ and a new vertex $\mv'$.

\begin{defi}
A graph $\mG$ is called a \textit{generalized line graph}  if there exists a graph $\mH$ and a vector in $\N^{|\mV|}$ such that $\mG$ is the line graph  of $\tilde{\mH}=\tilde{\mH}_{n_{\mv_1},\ldots,n_{\mv_{|\mV'|}}}$. In this case, $\tilde{\mH}$ is called a \textit{root multigraph} of $\mG$.
\end{defi}

Clearly, line graphs are generalized line graphs, but the converse is not true. 

Historically, the main reason for considering generalized line graphs is that their adjacency matrix satisfies
\[
\mathcal A=\tilde{\mathcal J}^T \tilde{\mathcal J}-2\Id_\mV\ .
\]
Here $\tilde{\mathcal J}$ is an incidence-like matrix of $\tilde{\mH}$ defined just like classical incidence matrices $\mathcal J$ of simple graphs, with the following exception: if $\me',\mf'$ are parallel edges between an ``old'' vertex $\mv'\in \mV'$ and a ``new'' vertex $\mw'\in \tilde{\mV}'$, then the corresponding entries $\tilde{\mathcal J}_{\mw' \me'},\tilde{\mathcal J}_{\mw' \mf'}$ are set equal to $+1$ and $-1$, respectively (the precise choice is irrelevant, these entries only need to have different sign).

\begin{rem} Equivalently, $\mG$ is called a generalized line graph  if $\mathcal A(\mG)+2\Id_\mV$ is the Gramian matrix of a subset $D_\mG$ of
\[
\{\pm e_i\pm e_j\in \R^{|\mV|}:i\ne j\}
\]
where $e_k$ is the standard basis vector of $\R^{|\mV|}$, cf.~\cite[Chapt.~12]{GodRoy01}; hence, the adjacency matrix of a generalized line graph satisfies
\[
\mathcal A(\mG)+2\Id_\mV=U^T U
\]
where $U$ is the matrix whose columns are the elements of $D_\mG$.
\end{rem}

The following counterpart of Lemma~\ref{lemma:basic-spectral-finite} is known to hold form generalized line graph, cf.~\cite[Thm.~2.2.8]{CveRowSim04}.

\begin{lemma}
Let $\mG$ be a generalized line graph with root multigraph $\tilde{\mH}_{n_{\mv_1},\ldots,n_{\mv_{|\mV'|}}}$, then $-2$ is an eigenvalue of $\mathcal A(\mG)$ if and only if 
\[
|\mE'|-|\mV'|+\sum_{\mV'\in \mV'}n_{\mV'}>0\ ,
\]
and in this case this positive number is precisely the multiplicity of the eigenvalue $-2$.
\end{lemma}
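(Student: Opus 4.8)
The plan is to follow the strategy behind Lemma~\ref{lemma:basic-spectral-finite}: realise the eigenspace for $-2$ as the kernel of the incidence-like matrix $\tilde{\mathcal J}$ and then apply the Rank--Nullity Theorem, so that everything reduces to a single rank computation. Starting from the identity $\mathcal A(\mG)+2\Id_\mV=\tilde{\mathcal J}^T\tilde{\mathcal J}$ recalled just above the statement, I would first observe that for every $u$ one has $(\tilde{\mathcal J}^T\tilde{\mathcal J}u\,|\,u)_{\ell^2(\mV)}=\|\tilde{\mathcal J}u\|^2$; hence $(\mathcal A(\mG)+2\Id_\mV)u=0$ if and only if $\tilde{\mathcal J}u=0$. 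Thus the eigenspace of $\mathcal A(\mG)$ for $-2$ is precisely $\Ker\tilde{\mathcal J}$, and $-2$ is an eigenvalue exactly when this kernel is nontrivial.

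Next I would read off the dimensions of $\tilde{\mathcal J}$. Its rows are indexed by the vertices of $\tilde{\mH}$ and its columns by the edges of $\tilde{\mH}$; since each of the $N:=\sum_{\mv'\in\mV'}n_{\mv'}$ petals contributes one new vertex and two parallel edges, $\tilde{\mH}$ has $|\mV'|+N$ vertices and $|\mE'|+2N$ edges. As $|\mV|$ equals the number of edges of $\tilde{\mH}$, the Rank--Nullity Theorem yields
\[
\dim\Ker\tilde{\mathcal J}=(|\mE'|+2N)-\operatorname{rank}\tilde{\mathcal J}\ .
\]
Comparing with the asserted value $|\mE'|-|\mV'|+N$ shows that the whole statement --- both the multiplicity formula and the ``if and only if'' --- is equivalent to the single identity $\operatorname{rank}\tilde{\mathcal J}=|\mV'|+N$, i.e.\ to $\tilde{\mathcal J}$ having full row rank.

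Establishing this full row rank is the main obstacle, and I would attack it through the left kernel. A vector $a$ on the vertices of $\tilde{\mH}$ lies in $\Ker\tilde{\mathcal J}^T$ exactly when the appropriately signed vertex-sum along every edge vanishes. For an ordinary edge $\mv'\mw'$ of $\mH$ the signless rule gives $a_{\mv'}+a_{\mw'}=0$, whereas the two parallel edges of a petal joining an old vertex $\mv'$ to its new vertex $\mw'$ force, thanks to the opposite signs $+1,-1$ prescribed at $\mw'$, both $a_{\mv'}+a_{\mw'}=0$ and $a_{\mv'}-a_{\mw'}=0$, hence $a_{\mv'}=a_{\mw'}=0$. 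Now the homogeneous system $a_{\mv'}+a_{\mw'}=0$ ranging over the edges of the connected graph $\mH$ admits only the trivial solution when $\mH$ is non-bipartite, and otherwise exactly the one-dimensional family of $\pm1$-signings of the bipartition; since such a signing is nonzero at every vertex, the presence of a single petal together with its constraint $a_{\mv'}=0$ already kills it. Either way $\Ker\tilde{\mathcal J}^T=\{0\}$, so $\tilde{\mathcal J}$ has the full row rank $|\mV'|+N$ and the formula follows.

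I would emphasise that this last step is the only place where the \emph{oriented} signs $+1,-1$ on the petals enter: they are precisely what destroys the bipartite degeneracy of the signless incidence matrix and removes the additive constant $\beta$ of Lemma~\ref{lemma:basic-spectral-finite}. Accordingly the clean formula presupposes that no connected component of $\tilde{\mH}$ is simultaneously bipartite and petal-free --- in the purely bipartite line-graph case $N=0$ one is back to Lemma~\ref{lemma:basic-spectral-finite} and recovers the $+1$ correction --- a hypothesis I would make explicit before invoking the rank identity.
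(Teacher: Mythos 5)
Your proof is correct, and it is worth noting that the paper itself offers no argument for this lemma at all: it is stated as a known result with a bare citation to~\cite[Thm.~2.2.8]{CveRowSim04}, just as the analogous Lemma~2.3 for ordinary line graphs is dispatched by citing~\cite[Thm.~2.2.4]{CveRowSim04} together with the Rank--Nullity Theorem. Your argument therefore supplies exactly what the paper delegates to the literature, and it does so along the lines the paper hints at: the Gram identity $\mathcal A(\mG)+2\Id_\mV=\tilde{\mathcal J}^T\tilde{\mathcal J}$ identifies the $(-2)$-eigenspace with $\Ker\tilde{\mathcal J}$, the petal bookkeeping ($|\mV'|+N$ vertices, $|\mE'|+2N$ edges of $\tilde{\mH}$, with $N=\sum_{\mv'}n_{\mv'}$) is right, and your left-kernel analysis correctly isolates where the oriented $\pm1$ entries on the petals matter: a petal forces $a_{\mv'}=a_{\mw'}=0$ and, by connectivity and the alternation constraints $a_{\mv'}+a_{\mw'}=0$ along ordinary edges, annihilates the bipartite degeneracy of the signless incidence matrix.

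Your closing caveat is not pedantry but a genuine catch: read literally, the lemma as printed in the paper is false in the petal-free bipartite case. Since every line graph is a generalized line graph with all $n_{\mv'}=0$, taking $\mH=\mC_4$ gives $|\mE'|-|\mV'|+N=0$, so the lemma would deny that $-2$ is an eigenvalue of $\mathcal A(L(\mC_4))=\mathcal A(\mC_4)$ --- contradicting both Lemma~2.3 (where the correction $\beta=1$ appears for bipartite $\mH$) and direct computation. The hypothesis you make explicit --- no connected component of $\tilde{\mH}$ simultaneously bipartite and petal-free, equivalently full row rank of $\tilde{\mathcal J}$ --- is precisely the condition under which the clean formula holds, and is presumably implicit in the cited source. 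So your proof not only fills the gap left by the citation but also repairs the statement.
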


All well-posedness results of this paper extend to generalized line graphs.

Generalized line graphs are not the only graphs that have adjacency matrices with eigenvalues not smaller than $-2$: the counterexamples are usually referred to as \textit{exceptional graphs} and 
 in the last two decades graphs with these properties have been studied in details and finally characterized, see e.g.~\cite{CveRowSim04}. The 	Petersen graph is a well-known exceptional graph.

\subsection{Adjacency-type operators on quantum graphs}

It is well-known that $\mathcal D^{-1}\mathcal A$ is the transition matrix associated with a random walk on the graph and it has been shown in~\cite{CarWoe07} that an interesting interplay exists between the spectral properties of ${\mathcal D}^{-1}\mathcal A$ and those of a certain bounded, self-adjoint operator $A$ over $L^2(\mathfrak G)$, where $\mathfrak G$ is the \textit{quantum graph} associated with $\mG$. Such an $A$ can be interpreted as the operator that maps functions over a quantum graph $\mathfrak G$ into their average over balls of radius 1 with respect to the canonical structure of a quantum graph as a metric measure space. Spectral relations between ${\mathcal D}^{-1}\mathcal A$ and $A$ as well as a representation of $A$ in terms of the quantum graph Laplacian via self-adjoint functional calculus have been proved in~\cite{CarWoe07,LenPan16}.
 The considerations in~\cite{CarWoe07} suggest that the correct quantum graph counterpart of our (unnormalized!) adjacency matrices is instead a linear combination $B$ of Volterra operators: such an operator $B$ then maps functions over $\mathfrak G$ into their integrals over balls of radius 1. Following the method suggested in~\cite[\S~14.4]{Haa14} we find that $\|B\|=\frac{4}{\pi}$ in case of the quantum graph built upon $\Z$ and, more generally, $\|B\|\asymp \deg_{\max}(\mG)$.$  $

\section{Appendix: Line graphs and their adjacency matrix}\label{sec:append}

Throughout this paper we discuss properties of the adjacency matrix $\mathcal A$ of a given graph $\mG$, under our standing assumptions that $\mG$ is a locally finite, simple graph with nonempty vertex and edge sets $\mV$ and $\mE$, respectively. Our most frequent setting is that $
\mG$ is the line graph of a connected graph $\mH=(\mV',\mE')$.
(In order to avoid trivialities, we will always assume that $\mE\neq\emptyset$, i.e., $\mH$ has at least three vertices.)

In agreement with the graph theoretical literature, see e.g.~\cite{Die05}, we mean by this that $\mG$ is constructed from $\mH$ according to the following procedure:
\begin{itemize}
\item $\mV=\mE'$, i.e., each edge of $\mH$ defines a vertex of $\mG$;
\item two vertices $\mv\simeq \me',\mw\simeq \mf'\in \mV$ are adjacent by means of an edge if and only if the corresponding edges $\me',\mf'\in \mE'$ share an endpoint.
\end{itemize}

Under these assumptions, we can consider an arbitrary orientation of the graph $\mH$ and the associated \emph{oriented} incidence matrix $\mathcal I:=({\iota}_{\mv' \me'})$ defined for all vertices $\mv'\in \mV'$ and all edges $\me'\in \mE'$ of $\mH$ by
$${\iota}_{\mv' \me'}:=\left\{
\begin{array}{ll}
-1 & \hbox{if } \mv' \hbox{ is initial endpoint of } \me', \\
+1 & \hbox{if } \mv' \hbox{ is terminal endpoint of } \me', \\
0 & \hbox{otherwise}
\end{array}\right.$$
as well as the signless incidence matrix $\mathcal J:=(|{\iota}_{\mv' \me'}|)$, i.e.,
\begin{equation}\label{eq:def-signlessinc}
|{\iota}_{\mv' \me'}|=\left\{
\begin{array}{ll}
+1 & \hbox{if } \mv' \hbox{ is an endpoint of } \me', \\
0 & \hbox{otherwise}
\end{array}\right.
\end{equation}
An edge $\me'\in \mE'$ is said to be \emph{incident} in a vertex $\mv'\in \mV'$ if $\iota_{\mv\me}\neq 0$. (In this paper, our focus lies on non-oriented graph: accordingly, it is mostly $\mathcal J$ we have used.)

Now, a direct computation shows that
\begin{equation}\label{eq:variationmain}
\mathcal A=\mathcal J^T \mathcal J-2\Id_{\mV}\ ,
\end{equation}
cf.~\cite[\S~1.1]{CveRowSim04},
where $\Id_{\mV}$ is the identity operator on the space of functions defined over $\mV\simeq \mE'$.

It is easy to see that if $\mG=(\mV,\mE)$ is the line graph of $\mH=(\mV',\mE')$, then its edge set $\mE$ has cardinality $|\mE|$ equal to
\[
\frac12 (\mathcal A{\bf 1}|{\bf 1})_{\ell^2(\mV)}=\frac12 \sum_{\mv'\in \mV'}\left|\sum_{\me'\in \mE'}|\iota_{\mv'\me'}| \right|^2 - |\mV'|\ .
\]
In particular, if a graph is finite (resp., countably infinite), then so is its line graph.

Not every simple graph $\mG$ is the line graph of some $\mH$: the claw graph -- i.e., the star on three edges $\mS_3$ -- is an example of a graph that is not a line graph; accordingly, a necessary condition for a graph to be a line graph is to be claw-free, i.e., not to contain $\mS_3$ as an induced subgraph. Indeed, Beineke's Theorem states that a finite simple graph is a line graph if and only if it does not contain any of nine forbidden graphs as induced subgraphs~\cite[Thm.~8.4]{Har69}. These include of course $\mS_3$ as well as the wheel $\mW_5$.

Nor is the graph $\mH$ uniquely determined by $\mG$:  the triangle $\mC_3$ is the line graph of both $\mS_3$ and $\mC_3$; however, there are no further pairs of graphs having the same line graph~\cite[Thm.~8.3]{Har69}. For this reason, we sometimes refer to $\mH$ as \textit{the} pre-line graph of $\mG$.

\bibliographystyle{alpha}
\newcommand{\etalchar}[1]{$^{#1}$}

\end{document}